\documentclass[12pt]{article}
\usepackage{graphicx} 
\usepackage[top=2cm,bottom=2cm,left=2cm,right=2cm]{geometry}

\usepackage{palatino}
\usepackage{latexsym}

\usepackage[mathscr]{eucal}
\usepackage{amsmath}
\usepackage{amsthm}
\usepackage{amsfonts}
\usepackage{amssymb}
\usepackage{amscd}
\usepackage{color}
\usepackage{graphicx}
\usepackage{graphics}
\usepackage{pifont}
\usepackage{subfigure}
\usepackage[makeroom]{cancel}
\usepackage[normalem]{ulem}
\usepackage[dvipsnames]{xcolor}
\usepackage{tikz}
\usepackage{cite}
\usepackage[hidelinks,pdfusetitle]{hyperref}

\theoremstyle{plain}
\newtheorem{theorem}{Theorem }
\newtheorem{remark}[theorem]{Remark}
\newtheorem{lemma}[theorem]{Lemma}
\newtheorem{proposition}[theorem]{Proposition}
\newtheorem{definition}[theorem]{Definition}

\newcommand{\R}{{\mathbb R}}
\newcommand{\N}{{\mathbb N}}
\newcommand{\T}{{\mathbb T}}
\newcommand{\C}{{\mathbb C}}
\newcommand{\la}{{\langle}}
\newcommand{\ra}{{\rangle}}

\title{Examples of small-time controllable Schrödinger equations\\
}

\begin{document}
\author{Karine Beauchard\footnote{Univ Rennes, CNRS, IRMAR - UMR 6625, F-35000 Rennes, France (karine.beauchard@ens-rennes.fr)},\quad Eugenio Pozzoli\footnote{Univ Rennes, CNRS, IRMAR - UMR 6625, F-35000 Rennes, France (eugenio.pozzoli@univ-rennes.fr)}}
\maketitle

\abstract{
A variety of physically relevant bilinear Schr\"odinger equations are known to be approximately controllable in large times. There are however examples which are approximately controllable in large times, but not in small times. This obstruction happens e.g. in the presence of (sub)quadratic potentials, because Gaussian states are preserved, at least for small times.

In this work, we provide the first examples of small-time approximately controllable bilinear Schrödinger equations. In particular, we show that a control on the frequency of a quadratic potential permits to construct approximate solutions that evolve arbitrarily fast along space-dilations. Once we have access to space-dilations, we can exploit them to generate time-contractions. In this way, we build on previous results of large-time control, to obtain control in small times.

}






\section{Introduction}
\subsection{The model}

Let $M$ be a Riemannian manifold without boundary. We consider the initial value problem for Schrödinger equations of the form
\begin{equation}\label{eq:schro}
\begin{cases}
i\partial_t\psi(t,x)=\left(-\Delta+V(x)+\sum_{j=1}^m u_j(t)W_j(x)\right)\psi(t,x), & (t,x) \in (0,T) \times M, \\
\psi(0,\cdot)=\psi_0.
\end{cases}
\end{equation}
The linear unbounded operator $\Delta$ is the Laplace-Beltrami operator of $M$, and is defined on the domain $H^2(M,\C)$. The time-dependent potential $V(x)+\sum_{j=1}^m u_j(t)W_j(x)$ is possibly unbounded, and defined on a suitable domain. The time-independent part $-\Delta+V$ is usually referred to as \emph{the drift}.

System \eqref{eq:schro} describes the dynamics of a quantum particle on the manifold $M$, with free (kinetic plus potential) energy $-\Delta+V$, in interaction with additional external fields with potentials $W_j$ that can be switched on and off. It is used to model a variety of physical situations, such as atoms in optical cavities, and molecular dynamics.

When well defined, the solution of \eqref{eq:schro} at time $t$, associated with a time-dependent function $u(t)=(u_1(t),\dots,u_m(t))$, and an initial state $\psi_0$, is denoted by $\psi(t;u,\psi_0)$ and lives in the unitary sphere $\mathcal{S}$ of $L^2(M)$
\begin{equation} \label{def:S}
\mathcal{S}:=\{\psi\in L^2(M,\C)\, ;\, \|\psi\|_{L^2(M)}=1\}.
\end{equation}

We are interested in the controllability properties of \eqref{eq:schro}. We think of the function $u$ as a \emph{control} that can be chosen and implemented in order to change the \emph{state} $\psi$ of the system. More precisely, given an initial and a final state $\psi_0,\psi_1$, we would like to find a control which steers the system from $\psi_0$ to $\psi_1$. We are in particular interested in the family of states that can be approximately reached from $\psi_0$ in arbitrarily small times (i.e., in time approximately zero).

\begin{definition}[Small-time controllability]
Let $(\mathcal{H},\|.\|_{\mathcal{H}})$ be a normed space, subset of $L^2(M,\C)$. 

For $\psi_0, \psi_1 \in \mathcal{H} \cap \mathcal{S}$, we say that $\psi_1$ is small-time 
$\mathcal{H}$-approximately (resp. exactly)
reachable from $\psi_0$ 
if for any $\varepsilon >0$, there exist 
a time $T \in[0,\varepsilon]$,
and a 
control $u:[0, T ]\to \R^m$ such that the Cauchy problem \eqref{eq:schro} 
has a unique solution $\psi \in C^0([0,T],\mathcal{H})$ and
$\|\psi( T;u,\psi_0)- \psi_1\|_{\mathcal{H}}<\varepsilon$
(resp. $\psi(T;u,\psi_0)= \psi_1$).
Then $\text{Adh}_{\mathcal{H}} \text{Reach}_{\text{st}}(\psi_0)$ 
(resp. $\text{Reach}_{\text{st}}(\psi_0)$)
denotes the set of small-time 
$\mathcal{H}$-approximately (resp. exactly) reachable states from $\psi_0$.

We say that \eqref{eq:schro} is small-time $\mathcal{H}$-approximately controllable if, for every
$\psi_0\in \mathcal{H} \cap \mathcal{S}$, 
$\mathcal{H} \cap \mathcal{S}=
\text{Adh}_{\mathcal{H}} \text{Reach}_{\text{st}}(\psi_0)$.
\end{definition}


Many equations of the form (\ref{eq:schro}) are known to be approximately controllable in large time, in various functional frameworks $\mathcal{H}$ (see Section \ref{subsec:Biblio}).

The control of Schrödinger equations \textbf{in small time} is an open challenge: in particular, there exist examples of equations as \eqref{eq:schro} which are $\mathcal{H}$-approximately controllable in large times, but not in small-time (see Section \ref{sec:STAC}). In this paper, we wish to elucidate the role of this stronger notion of control, by providing the first examples where it holds true.

Small-time controllability has particularly relevant physical implications, both from a fundamental viewpoint and for technological applications. As a matter of fact, quantum systems, once engineered, suffer of very short lifespan before decaying (e.g., through spontaneous photon emissions) and loosing their non-classical properties (such as superposition). The capability of controlling them in a minimal time is in fact an open challenge also in physics (see, e.g., the pioneering work \cite{khaneja-brockett-glaser} on the minimal control-time for spin systems).

\subsection{Results}
\subsubsection{A small-time approximately controllable equation}

In this paper, we present the first example of small-time approximately controllable Schrödinger equation of the form \eqref{eq:schro}, which is the system
\begin{equation}\label{eq:small-time-oscillator_BIS}
\begin{cases}
i\partial_t\psi(t,x)=
\left(- \Delta + V(x) +u_{1}(t)|x|^2+u_2(t)W_2(x)\right)\psi(t,x), & (t,x) \in (0,T) \times \R^d\,,\\
\psi(0,\cdot)=\psi_0,&
\end{cases}
\end{equation}
where $W_2\in L^{\infty}(\R^d,\R)$ and 
\begin{align}
&V \in L^2_{\rm loc}(\R^d,\R), \qquad 
\exists a,b > 0,\, \forall x \in \R^d, V(x)\geq -a|x|^2-b, \label{Hyp:V_transp}
\\
&\exists c, \delta>0, \gamma>\max\{-2,-d/2\},\,
\forall x \in B_{\R^d}(0,\delta), |V(x)|\leq c|x|^\gamma. \label{Hyp:V}
\end{align}
In the physical literature, the control $u_1$ is usually referred to as \emph{the frequency} of the quadratic potential. For $V=0$, and when $u_1(t)> 0$, this model corresponds to the harmonic oscillator, a system widely investigated both in physics and mathematics. 
It also includes singular potentials $V$, as for instance the Coulomb potential $V(x)= |x|^{-1}$ in dimension $d\geq 3$. 
We prove the following result of global approximate controllability in arbitrarily small times.

\begin{theorem}\label{thm:small-time-global_BIS}
Let $V$ satisfy \eqref{Hyp:V_transp} and (\ref{Hyp:V}). There exists a dense subset $\mathcal{D}$ of $L^\infty(\R^d,\R)$ such that, for every $W_2 \in \mathcal{D}$, system (\ref{eq:small-time-oscillator_BIS}) is small-time $L^2$-approximately controllable.
\end{theorem}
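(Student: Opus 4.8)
The plan is to combine two ingredients: (i) a \emph{small-time} reachability statement that gives access to space-dilations of any state, obtained by exploiting the control $u_1$ on the frequency of the quadratic potential; and (ii) a \emph{large-time} approximate controllability result for the pair $(-\Delta+V,\ W_2)$, which is available from the literature (Section~\ref{subsec:Biblio}) for $W_2$ in a suitable dense set $\mathcal{D}$, and which I will then accelerate into small time by conjugating it with the dilations from step (i). The heuristic behind (i) is that turning $u_1$ on and off in a precise way generates, in the semiclassical/Gaussian sense, the one-parameter group of unitary dilations $(D_\lambda\psi)(x)=\lambda^{d/2}\psi(\lambda x)$: indeed $|x|^2$ and $-\Delta$ together with the generator $x\cdot\nabla+\nabla\cdot x$ of dilations close an $\mathfrak{sl}_2$ algebra, so alternating pulses of $-\Delta+u_1|x|^2$ approximate $e^{s(x\cdot\nabla+\nabla\cdot x)}=D_{e^s}$ in arbitrarily short physical time. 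The hypotheses \eqref{Hyp:V_transp}–\eqref{Hyp:V} on $V$ are exactly what is needed for this construction to make sense on the domain where the Cauchy problem \eqref{eq:schro} is well posed (the subquadratic lower bound controls the drift, the local $|x|^\gamma$ bound with $\gamma>\max\{-2,-d/2\}$ keeps $V$ relatively form-bounded and behaves well under dilation).

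First I would make precise, as a lemma, the statement that for every $\psi_0\in L^2\cap\mathcal S$ and every $\lambda>0$, the dilated state $D_\lambda\psi_0$ belongs to $\mathrm{Adh}_{L^2}\mathrm{Reach}_{\mathrm{st}}(\psi_0)$ for system \eqref{eq:small-time-oscillator_BIS}; this is presumably proved earlier in the paper and I would invoke it. The key point I would extract from it is the \emph{time-contraction} principle: if a control $u_2$ on $[0,T]$ approximately steers $\psi_0$ to $\psi_1$ using the drift $-\Delta+V$ and the field $W_2$, then rescaling space by $D_\lambda$ with $\lambda\to\infty$ rescales time by $\lambda^{-2}$ for the $-\Delta$ part, so conjugating the whole control strategy by dilations — first dilate, then run the (rescaled, hence much faster) $W_2$-strategy, then dilate back — produces an approximate steering of $\psi_0$ to $\psi_1$ in time $O(\lambda^{-2}T)+o(1)\to 0$. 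The terms $V$ and $|x|^2\mapsto\lambda^{-2}|x|^2$ (resp. $\lambda^{2}$) under dilation must be tracked: the quadratic term is harmless since we still control its frequency $u_1$, and the error coming from $V$ not being scale-invariant is controlled using \eqref{Hyp:V}, becoming negligible as $\lambda\to\infty$ on the relevant (fixed, compactly concentrated up to density) states.

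Next I would fix the functional framework. Since the target notion is $L^2$-approximate controllability, I would first prove the result for $\psi_0,\psi_1$ in a convenient dense subset of $\mathcal S$ — e.g. smooth, compactly supported, suitably concentrated states — where all the domain and convergence issues for \eqref{eq:schro} are clean, and then pass to general $\psi_0,\psi_1\in L^2\cap\mathcal S$ by density together with continuity of the propagator in $L^2$ and a triangle-inequality/diagonal argument. The set $\mathcal D\subset L^\infty(\R^d,\R)$ is chosen so that for every $W_2\in\mathcal D$ the static pair $(-\Delta+V,W_2)$ satisfies the spectral/non-resonance hypotheses of the classical large-time approximate controllability theorems (e.g. non-degenerate gaps and connectedness of the graph of couplings $\langle\phi_j,W_2\phi_k\rangle$); density of such a set in $L^\infty$ is itself a known genericity statement that I would cite or reprove by a short perturbation argument. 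Combining: given $\psi_0,\psi_1$, use large-time approximate controllability to get a control $u_2$ on some $[0,T]$ with $\|\psi(T;(0,u_2),\psi_0)-\psi_1\|_{L^2}<\varepsilon/2$, then apply the time-contraction principle to realize an $\varepsilon$-close approximation of the same transfer in time $<\varepsilon$.

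The main obstacle I anticipate is step (i) made rigorous, i.e. controlling the \emph{errors} in the approximation of the dilation group by the physical flow when $V\ne0$ and is singular: one must show that the Lie-bracket/Trotter-type expansion generating $x\cdot\nabla+\nabla\cdot x$ converges on states in the domain, uniformly enough that the accumulated error over the (many, short) pulses needed for a large dilation factor $\lambda$ stays $o(1)$, and simultaneously that conjugating the $W_2$-strategy by $D_\lambda$ does not blow up because $W_2\in L^\infty$ is \emph{not} scale-invariant (its dilate $W_2(\lambda^{-1}\cdot)$ changes, though its $L^\infty$ norm does not, which is what saves the argument). Handling the interplay of these two limits (large $\lambda$ for time-contraction versus the fixed large time $T$ of the slow strategy, and the singularity exponent $\gamma$ near the origin) is where the quantitative care is needed; everything else is density, continuity of the flow, and citation of known large-time results.
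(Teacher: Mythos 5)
Your high-level picture (dilations generate time-contractions, which accelerate a known large-time result into small time) is the right one and matches the paper's Section~\ref{sec:techniques}, but two of your concrete steps would break down, and the paper avoids both by a different decomposition.

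\emph{First gap: which large-time result you invoke.} You propose to cite large-time approximate controllability for the pair $(-\Delta+V,\,W_2)$. But the hypotheses \eqref{Hyp:V_transp}--\eqref{Hyp:V} on $V$ do not guarantee that $-\Delta+V$ has pure point spectrum or any useful spectral structure (for example $V\equiv 0$, or $V=-a|x|^2$, are admissible), so the classical large-time theorems you allude to do not apply to that drift. The paper instead invokes the large-time result for the \emph{$V=0$} system, namely Proposition~\ref{Prop:Boscain&al}, whose building blocks are the unitaries $e^{i\sigma(\Delta-|x|^2+\alpha W_2)}$ associated with the constant control $u_1\equiv 1$ (which turns the drift into a harmonic oscillator, whose spectrum is discrete). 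The whole point of the small-time analysis is then to show that these $V$-free building blocks are $L^2$-STAR for the system \emph{with} $V$ (Propositions~\ref{Prop:Approx_BIS}--\ref{Prop:exp(isDelta)}); $V$ is harmless precisely because it is washed out in the dilation limit, thanks to \eqref{Hyp:V}.

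\emph{Second gap: what you conjugate by dilations.} You propose to conjugate the entire $W_2$-strategy by $D_\lambda$. That does not produce a controlled trajectory of the original system, because $D_{\lambda^{-1}} W_2 D_\lambda=W_2(\lambda^{-1}\cdot)$ is a different potential, and after the accompanying time rescaling the $W_2$ contribution appears with a vanishing prefactor, so it is not preserved --- the unchanged $L^\infty$ norm does not rescue this, contrary to what you suggest. The paper avoids conjugating $W_2$ at all. It writes $e^{i\sigma(\Delta-|x|^2+\alpha W_2)}$ as a Trotter--Kato limit of products $\bigl(e^{i\sigma\Delta/n}\,e^{-i\sigma|x|^2/n}\,e^{i\alpha\sigma W_2/n}\bigr)^n$ (Step~2 in the proof of Proposition~\ref{Prop:Approx_BIS} and Item~4 of Proposition~\ref{Prop:exp(isDelta)}), and proves each factor $L^2$-STAR separately: the $e^{i\delta W_2}$ and $e^{i\delta|x|^2}$ factors by taking a constant control of large amplitude over a short time, and only the $e^{i\sigma\Delta}$ factor via the dilation conjugation $D_{t^{1/2}}e^{i\sigma t(\Delta-V)}D_{t^{-1/2}}\to e^{i\sigma\Delta}$, where $V$ enters as $tV(t^{1/2}\cdot)\to 0$ in the limit. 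Closedness of the set of STAR operators under composition and strong limits (Lemma~\ref{lem:reachable-operators}) then assembles the pieces. This Trotter--Kato factorization, rather than conjugating the whole $W_2$-dynamics, is the key idea your proposal is missing.
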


Under appropriate assumptions on $V$ and $W_2$, the small-time $\mathcal{H}$-approximate controllability holds for functional spaces $\mathcal{H}$ more regular than $L^2$ (see Theorem \ref{thm:small-time-global_smooth} where we treat the case $V=0$ for simplicity).

\begin{remark} \label{Rk:ex_W2_ab}
An explicit example of potential $W_2$ for which system (\ref{eq:small-time-oscillator_BIS}) is small-time $L^2$-approximately controllable is, in the case $d=1$, $W_2(x)=e^{ax^2+bx}$, with $a<0$ and $a,b$ algebraically independent (see the end of Section \ref{subsec:General} for a proof).
\end{remark}

\subsubsection{A limiting example}

In Theorem \ref{thm:small-time-global_BIS}, selecting an appropriate potential $W_2$ is required for the small-time approximate controllability, as emphasized by the following counter-example,
\begin{equation}\label{eq:oscillator_BIS}
\begin{cases}
i\partial_t\psi(t,x)=\left(-\Delta
+u_0(t)|x|^2
+ \sum_{j=1}^{d} u_j (t) x_j \right)
\psi(t,x),& (t,x) \in (0,T)\times\R^d, \\
\psi(0,\cdot)=\psi_0.&
\end{cases}
\end{equation}

\begin{theorem} \label{Thm:Lim_Ex}
System (\ref{eq:oscillator_BIS}) is not $L^2$-approximately controllable.
\end{theorem}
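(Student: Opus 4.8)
The plan is to exhibit a family of states that is invariant under the flow of \eqref{eq:oscillator_BIS} for every choice of controls, and which is not dense in $\mathcal{S}$. The natural candidate, in view of the remark in the abstract about (sub)quadratic potentials preserving Gaussian states, is the family of (generalized, possibly squeezed and translated) Gaussian wave packets
\begin{equation*}
\mathcal{G}:=\Big\{ x\mapsto c\,\exp\!\big(-\tfrac12 x^\top A x + p^\top x\big)\ :\ A\in \C^{d\times d},\ A^\top=A,\ \mathrm{Re}\,A>0,\ p\in\C^d,\ c\in\C,\ \|\cdot\|_{L^2}=1\Big\}.
\end{equation*}
The first step is to show that $\mathcal{G}\cap\mathcal{S}$ is closed in $L^2(\R^d)$ and is a proper subset (e.g.\ no Gaussian is orthogonal to the ground state of $-\Delta+|x|^2$ in a way that would let one approximate excited Hermite functions, or more simply: a nonzero limit of $L^2$-normalized Gaussians is again a Gaussian by pointwise/locally-uniform convergence of the exponents, and a single Gaussian cannot approximate, say, a first excited Hermite function since the $L^2$-distance from any Gaussian to that function is bounded below). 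The second step is the dynamical invariance: for any measurable controls $u_0,\dots,u_d$, and any $\psi_0\in\mathcal{G}$, the solution $\psi(t;u,\psi_0)$ stays in $\mathcal{G}$ for all $t$ for which it is defined.

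For the invariance step I would substitute the Gaussian ansatz $\psi(t,x)=\exp\big(-\tfrac12 x^\top A(t)x + p(t)^\top x + c(t)\big)$ into \eqref{eq:oscillator_BIS}. Because the Hamiltonian $-\Delta + u_0(t)|x|^2 + \sum_j u_j(t)x_j$ is a quadratic polynomial in $x$ with time-dependent (but $x$-independent) coefficients, matching powers of $x$ produces a closed system of ODEs: a matrix Riccati equation $i\dot A = -A^2 \cdot(\text{const}) + 2u_0(t)\,\mathrm{Id}$ (schematically $\dot A = -iA^2\cdot(\ldots) + \ldots$, after normalizing the $-\Delta$ coefficient), a linear equation for $p(t)$ driven by $A(t)$ and $u_j(t)$, and a scalar equation for $c(t)$ fixed by $L^2$-normalization. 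One shows this system has a solution on a maximal interval, that $\mathrm{Re}\,A(t)$ stays positive-definite there (using that $u_0$ real keeps the relevant quadratic form from degenerating — this is where hypothesis-type arguments of the flavour of \eqref{Hyp:V_transp} with $V\equiv0$ enter, ensuring no finite-time blow-up into a non-Gaussian or the solution can be continued), and that the resulting $\psi$ is indeed the unique solution of the Cauchy problem in the relevant space by a well-posedness/uniqueness argument for this quadratic Hamiltonian (Mehler-type formula / metaplectic representation). Hence $\mathrm{Reach}(\psi_0)\subset \mathcal{G}$ for $\psi_0\in\mathcal{G}$, and by closedness $\mathrm{Adh}_{L^2}\mathrm{Reach}(\psi_0)\subset\overline{\mathcal{G}}^{\,L^2}=\mathcal{G}\cap\mathcal{S}\subsetneq\mathcal{S}$, so \eqref{eq:oscillator_BIS} is not $L^2$-approximately controllable.

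The main obstacle I anticipate is making the invariance rigorous at the level of the PDE rather than just the formal ODE matching: one must justify that a genuine $L^2$ (or $\mathcal{H}$-regular) solution of \eqref{eq:oscillator_BIS} starting from a Gaussian coincides with the explicit Gaussian built from the ODE system — i.e.\ a uniqueness statement for time-dependent quadratic Schrödinger propagators with only measurable-in-time controls — and that the Riccati solution $A(t)$ does not leave the admissible set (positive real part, symmetric) in finite time, which requires a short a priori estimate on the Riccati flow. A secondary, easier point is the properness and closedness of $\mathcal{G}\cap\mathcal{S}$ in $L^2$, which follows from the fact that $L^2$-convergence of normalized Gaussians forces convergence of the finitely many parameters $(A,p,c)$ (after extracting the phase), with the limit again Gaussian, together with an explicit lower bound showing some $\psi_1\in\mathcal{S}$ (e.g.\ a superposition of two well-separated Gaussians, or a Hermite function of degree $\ge1$) lies at positive distance from $\mathcal{G}$.
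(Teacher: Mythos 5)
Your strategy is correct and close in spirit to the paper's: both proofs hinge on the fact that the quadratic Hamiltonian of \eqref{eq:oscillator_BIS} preserves a family of (translated, boosted) Gaussian wave packets, and that this family is a proper closed subset of $\mathcal{S}$. The route is genuinely different, though. You work directly with the Gaussian ansatz $e^{-\frac12 x^\top A(t)x + p(t)^\top x + c(t)}$ and the induced matrix Riccati/linear ODE system, then invoke well-posedness of the PDE to match the ansatz with the genuine $L^2$-solution. The paper instead first derives an \emph{exact factorization} of the propagator $\psi(T;u,\cdot)$ as a finite product of elementary isometries $e^{i(\langle p,x\rangle+\theta)}\,\tau_q\, e^{i\alpha_n|x|^2}D_{\beta_n}e^{i\sigma_n\Delta}\cdots e^{i\alpha_1|x|^2}D_{\beta_1}e^{i\sigma_1\Delta}$ (Theorem~\ref{Prop:Lim_Ex}, via the reduction \eqref{def:p,q,theta}--\eqref{equation_de_xi} and the ODE system \eqref{eq:abxi}, with a subdivision argument and Lemma~\ref{Lem:EDO} controlling finite-time existence of the Riccati), and then observes that each elementary factor preserves the explicitly defined Gaussian sets $\mathcal{G}$ and $\mathcal{G}'$ (Lemmas~\ref{Lem:Gauss}, \ref{Lem:Gauss_bis}). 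The paper's factorization approach buys more: it yields a precise description of the exactly and approximately reachable sets (statements 1 and 3 of Theorem~\ref{Prop:Lim_Ex}) and handles \emph{all} initial data, including $\psi_0\notin\mathcal{G}$, by an isometry-preserves-distance argument; your approach establishes non-controllability only from a Gaussian $\psi_0$, which is logically sufficient to refute $L^2$-approximate controllability but gives less structural information. Two points you correctly flag as needing care are indeed where the work lies, and where the paper's route is cleaner: (i) the closedness of $\mathcal{G}$ in $L^2$ after extracting phases (the paper uses Riemann--Lebesgue to control the imaginary quadratic parameter $b_n$ in Lemma~\ref{Lem:Gauss}); (ii) global-in-time solvability of the Riccati ODE, which the paper sidesteps by proving local existence under a smallness condition on $T\|u\|_{L^1}$ and subdividing the time interval, rather than by a metaplectic/Siegel half-space invariance argument. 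Finally, your ansatz with a full symmetric matrix $A$ is more general than needed: since $|x|^2$ is radial and the linear controls produce only translations/boosts, the isotropic family $\mathcal{G}'$ of the paper already suffices and keeps the closedness analysis one-dimensional in the quadratic parameter.
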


Notice that this obstruction holds even in large times. This result is a consequence of a more precise one (cf. Theorem \ref{Prop:Lim_Ex}). A weaker version of it appeared in \cite[Observation II]{teismann}. The subsystem obtained with $u_0=1$ was already known not to be approximately controllable \cite{mirrahimi-rouchon}. Also, the subsystem obtained with $u_0=0$ was already known not to be small-time approximately controllable \cite{beauchard-coron-teismann,beauchard-coron-teismann2}.  Here we show that the additional control $u_0(t)$ does not restore controllability. Additionally, we also provide a description of the approximately reachable set as a product of a fixed number (i.e., independent of the initial and final state) of unitaries (cf. Theorem \ref{Prop:Lim_Ex} (3)).

\subsection{The technique: space-dilations and time-contractions}\label{sec:techniques}

The proof of Theorem \ref{thm:small-time-global_BIS} is based on the idea that, by suitably tuning the quadratic potential $\varphi(x,t)=u(t)|x|^2$, the wavefunction can be controlled to evolve approximately along specific flows, namely the ones generated by the transport operator $\langle\nabla_x\varphi,\nabla\rangle+\frac{1}{2}\Delta_x\varphi=u(t)(2 \langle x,\nabla\rangle+d)$. These are clearly space-dilations 
$$D_\alpha\psi:=\alpha^{d/2}\psi(\alpha x)=\exp\left(\log(\alpha)(\langle x,\nabla\rangle+d/2)\right)\psi.$$
We then use space-dilations, to induce time-contractions: as detailed in Proposition \ref{Prop:exp(isDelta)}, this is obtained by considering the following limit of conjugated dynamics $$e^{is\Delta}\psi =\lim_{t\to 0^+}D_{t^{1/2}}e^{ist(\Delta-V)}D_{t^{-1/2}}\psi.$$
This is basically a time-rescaling of \eqref{eq:small-time-oscillator_BIS}: the important fact is that this time-rescaling can be represented as a composition of three small-time approximate solutions of \eqref{eq:small-time-oscillator_BIS}. On the LHS, $s>0$ can be as large as we want (miming in this way a large-time free evolution), while on the RHS the control time is given by the product $st$, hence we can take $t$ as small as desired to follow a control trajectory in arbitrarily small time: in this way, a large-time free evolution can be approximated by a small-time controlled evolution. This allows us to rescale the time of the drift, and thus transform large-time controllability results into small-time controllability results.

\subsection{Literature review} \label{subsec:Biblio}

There is a vast mathematical literature on the controllability of bilinear Schrödinger equations.  This is also due to the relevance of these mathematical questions for physical and engineering applications. Before discussing the literature on bilinear PDEs, let us briefly mention the controllability properties in the finite-dimensional settings.

\subsubsection{Finite-dimensional systems}
The controllability properties of finite-dimensional bilinear systems of the form 
$$i\frac{d\psi}{dt}=\left(H_0+\sum_{j=1}^mu_j(t)H_j\right)\psi, \quad \psi\in \C^n,$$
where $H_0,...,H_m$ are Hermitian matrices (or, in other words, $iH_0,\dots,iH_m\in \frak{u}(n)$ the Lie algebra of skew-Hermitian matrices), are well-understood. In particular, it is known that exact and approximate controllability are equivalent properties \cite{BGRS}, that large-time controllability (for the unitary propagator evolving in the Lie group $U(n)$) is equivalent to the Lie rank condition ${\rm Lie}\{iH_0,\dots,iH_m\}=\frak{u}(n)$ \cite{sussmann-jurje}, and that small-time controllability is equivalent to the strong Lie rank condition ${\rm Lie}_\psi\{iH_1,\dots,iH_m\}=T_\psi S^{2n-1}$ for some $\psi\in S^{2n-1}:=\{\psi\in\C^n\mid |\psi|=1\}$ (here $T_\psi S^{2n-1}$ denotes the tangent space of the sphere $S^{2n-1}$ at the point $\psi$, and ${\rm Lie}_\psi$ denotes the evaluation of the algebra at the point $\psi$) \cite{time-zero}. We can interpret the latter condition by saying that the drift operator $H_0$ does not contribute to small-time controllability in the finite-dimensional case. This is a fundamental difference w.r.t. the infinite-dimensional setting, where the drift $H_0=-\Delta$ gives a necessary contribution for small-time control: indeed, the strong Lie rank condition is clearly not satisfied in the infinite-dimensional setting, being the $H_j=W_j(x), j=1,\dots,m$, commuting operators of multiplication; the only non-commuting part, in the infinite-dimensional setting, is the ideal generated by the Laplacian $H_0=-\Delta$. This intuitive correspondence between controllability and non-commuting operators is indeed rigorously developed in this paper, in terms of precise choices of non-commuting controlled propagators (for more details we refer to Proposition \ref{Prop:exp(isDelta)} and Theorem \ref{Prop:Lim_Ex}). E.g., notice that the controlled trajectories (we briefly introduced in the Section \ref{sec:techniques}) which follow transport flows, such as space-dilations, are indeed generated by the first order commutator $\frac{1}{2}[\Delta,u(t)|x|^2]:=\frac{u(t)}{2}(\Delta |x|^2-|x|^2\Delta).$

\subsubsection{Topological obstructions to exact controllability}

In \cite{BMS}, Ball, Marsden and Slemrod proved obstructions to global exact controllability of linear PDEs with bilinear controls, such as \eqref{eq:schro}. Precisely, if 
$\mathcal{H}$ is a Hilbert space, subset of $L^2(M,\C)$,
$i(\Delta+V)$ generates a group of bounded operators on $\mathcal{H}$ 
and the multiplicative operators $W_j$ are bounded on $\mathcal{H}$, 
then system \eqref{eq:schro} is not exactly controllable in $\mathcal{H} \cap \mathcal{S}$, with controls 
$u=(u_1,\dots,u_m) \in L^p_{loc}(\R,\R^m)$ with $p>1$.
The fundamental reason behind is that, under these assumptions, the reachable set has empty interior in $\mathcal{H}$.
The case of $L^1_{loc}$-controls ($p=1$) was incorporated in \cite{Chambrion-Caponigro-Boussaid-2020} and extensions to nonlinear equations were proved in \cite{chambrion-laurent,chambrion-laurent2}.

\medskip

Note that this result does not apply to the systems 
\eqref{eq:small-time-oscillator_BIS} and 
\eqref{eq:oscillator_BIS},
with, for instance $\mathcal{H}=L^2(\R^d,\C)$,
because they have unbounded control potentials.


\medskip

After this seminal work \cite{BMS}, different notions of controllability have been studied for system \eqref{eq:schro}, such as exact controllability in more regular spaces (on which the control potentials $W_j$ do not define bounded operators), or approximate controllability: the latter is the one analyzed in this paper. 

\subsubsection{Exact controllability in more regular spaces}

For system \eqref{eq:schro} with $V=0$, $m=1$, $M=(0,1)$ and Dirichlet boundary conditions, local exact controllability was first proved in 
\cite{beauchard1,beauchard-coron} with Nash-Moser techniques, 
to deal with an apparent derivative loss problem,
and then in \cite{beauchard-laurent} with a classical inverse mapping theorem, thanks to a regularizing effect. By grafting other ingredients onto this core strategy, global (resp. local) exact controllability in regular spaces was proved for different models
in \cite{nersesyan-nersisyan,morancey-polarizability} (resp. 
\cite{bournissou}).

\subsubsection{Approximate controllability}

The first results of global approximate controllability of bilinear Schrödinger equations in $L^2$ (and more regular spaces) were obtained in \cite{BCMS,nersesyan,ervedoza}.
More generally, it is known that if the drift $-\Delta+V$ has only point spectrum, \eqref{eq:schro} is globally approximately controllable in large times in $L^2(M)$, generically w.r.t. $V,W_0,\dots,W_m$ \cite{MS-generic}, for a manifold $M$ of arbitrary dimension. The problem of approximately controlling bilinear Schrödinger equations in large time is thus well-understood, if the drift has pure point spectrum only. E.g., in the one-dimensional case, global approximate controllability in large times of \eqref{eq:small-time-oscillator_BIS} with $V=0$ was proved in \cite{BCMS}. On the other hand, harmonic oscillators controlled with linear-in-space potentials only, are highly non-controllable even in large times \cite{mirrahimi-rouchon}; in Theorem \ref{Thm:Lim_Ex} we prove that quadratic-in-space control potentials do not restore controllability. 


In the case of a drift with mixed continuous and point spectrum, few results of large-time approximate controllability (e.g., between bound states) are also available \cite{mirrahimi,chambrion,beauchard-coron-rouchon}. 

\medskip
\subsubsection{Small-time approximate controllability}\label{sec:STAC}
Small-time global approximate controllability has been an open challenge since the early days of bilinear control of Schrödinger equations, both in physics and mathematics. From a mathematical point of view, only few results are available for PDEs. There are known obstructions to the small-time global approximate controllability of (sub)harmonic quantum oscillators \cite{beauchard-coron-teismann,beauchard-coron-teismann2}: these obstructions are related to the approximation of coherent harmonic states and the consequent preservation of Gaussian states for small times, which holds in the presence of linear-in-space control potentials only (see also \cite{obstruction-ivan} for other semi-classical obstructions related to the vanishing set of the $W_j$'s). We also point out an academic example of a small-time globally approximately controllable conservative PDE \cite{boussaid-caponigro-chambrion}, which involves as drift, instead of the Laplacian, the operator $|\Delta|^\alpha,\alpha> 5/2$, on the 1-D torus $M=\T$.

Recently, a renewed interest has been brought by the work \cite{duca-nersesyan}, where small-time bilinear approximate controllability between eigenstates is proved on the $d$-dimensional torus $\T^d$, by means of an infinite-dimensional geometric control approach, adapted to bilinear Schrödinger equations. This approach was firstly developed for the additive control of Navier-Stokes equations \cite{agrachev-sarychev,agrachev2}. Several works on the small-time approximate controllability of Schrödinger and wave equations, exploiting similar techniques, have followed \cite{coron-xiang-zhang,chambrion-pozzoli,duca-pozzoli,pozzoli,Boscain-2024}.

With respect to this literature, Theorem \ref{thm:small-time-global_BIS} is the first available result, on bilinear Schrödinger equations, of global approximate controllability in arbitrarily small times. From a technical point of view, instead of exploiting fast local phase control as in \cite{duca-nersesyan}, we build controlled trajectories following arbitrarily fast transport flows: this technique will be generalized to other potentials (i.e., not necessarily quadratic) in a future work. 



\subsection{Structure of the paper}

In Section \ref{sec:preliminaries} we introduce some control notions and recall some functional analytic tools.  
In Section \ref{Sec:Ex2}, we prove Theorem \ref{thm:small-time-global_BIS}. 
In Section \ref{sec:Lim_Ex}, we prove Theorem \ref{Thm:Lim_Ex}.
In Section \ref{sec:TM_reg}, we adapt Theorem \ref{thm:small-time-global_BIS} to higher regularities.


\section{Preliminaries}\label{sec:preliminaries}

In this section, we introduce the set of small-time reachable operators, prove its semi-group structure and its closure.
We also recall some classical tools of functional analysis that we shall need in the rest of the paper.

\subsection{Small-time approximately reachable operators}
\label{Subsec:STAROp}

To ensure well-posedness and facilitate our control strategy, we introduce a notion of admissible controls.

\begin{definition}[Set of admissible controls]\label{Def:adm_cont}
    Let $(\mathcal{H},\|.\|_{\mathcal{H}})$ be a normed $\mathbb{C}$-vector space, subset of $L^2(M,\C)$.  $\mathcal{U}$ is a set of admissible controls for the system \eqref{eq:schro} and the state space $\mathcal{H}$ if
\begin{itemize}
\item 
for every $T>0$, 
$u \in \mathcal{U}(0,T)$ and $\psi_0 \in \mathcal{H} \cap \mathcal{S}$,
\eqref{eq:schro} has a unique solution $\psi \in C^0([0,T],\mathcal{H})$ and $\psi(T;u,.)$ is a bounded operator on $\mathcal{H}$,
\item 
for every 
$T_1, T_2>0$, 
$ u_1 \in \mathcal{U}(0,T_1)$ and $u_2 \in \mathcal{U}(0,T_2)$ there holds
$u_1 \sharp u_2 \in \mathcal{U}(0,T_1+T_2)$, where, for every $t \in (0,T_1+T_2)$,
$$u_1 \sharp u_2(t)=
\left\lbrace \begin{array}{ll}
u_1(t) \quad & \text{ if } t \in (0,T_1), \\
u_2(t-T_1) & \text{ if } t \in (T_1,T_1+T_2).
\end{array}\right.$$
\end{itemize}
\end{definition}

In this article, most often
$\mathcal{H}=L^2(\R^d,\C)$
(except for Sections \ref{sec:TM_reg})
and $\mathcal{U}$ is the space of piecewise constant functions 
(except for Sections \ref{Subsec:Toy} and \ref{sec:TM_reg}),
which is clearly stable by concatenation.
Small-time approximately reachable states will be described in terms of small-time approximately reachable operators.

\begin{definition}[Small-time $\mathcal{H}$-approximately reachable operator, $\mathcal{H}$-STAR] \label{Def:STAR}
 Let $(\mathcal{H},\|.\|_{\mathcal{H}})$ be a normed $\C$-vector
 space, subset of $L^2(M,\C)$ and $\mathcal{U}$ be a set of admissible controls for the system \eqref{eq:schro} and the state space $\mathcal{H}$. Let also $\mathcal{L}(\mathcal{H})$ be the algebra of bounded linear operators acting on $\mathcal{H}$.
\begin{itemize}
\item For $T>0$, an isometry $L \in \mathcal{L}(\mathcal{H})$ is \textbf{$\mathcal{H}$-approximately  (resp. $\mathcal{H}$-exactly) reachable in time $T$} if, for every $\psi_0 \in \mathcal{H} \cap \mathcal{S}$ and $\varepsilon>0$, there exists $u \in \mathcal{U}(0,T)$ such that 
 $\| \psi(T;u,\psi_0)-  L \psi_0 \|_{\mathcal{H}}<\varepsilon$ (resp. $\psi(T;u,\psi_0)= L \psi_0$).

\item Given $T \geq 0$, the operator $L$ is \textbf{$\mathcal{H}$-approximately (resp. $\mathcal{H}$-exactly) reachable in time $T^+$} if,  for every $\psi_0 \in \mathcal{H} \cap \mathcal{S}$ and $\varepsilon>0$, there exists 
$T_1 \in [T,T+\varepsilon]$ such that it is $\mathcal{H}$-approximately (resp. $\mathcal{H}$-exactly) reachable in time $T_1$.


\item The operator $L$ is \textbf{small-time $\mathcal{H}$-approximately  (resp. $\mathcal{H}$-exactly) reachable} if it is $\mathcal{H}$-approximately  (resp. $\mathcal{H}$-exactly) reachable in time $0^+$. Then, we use the abbreviation: \textquotedblleft the operator $L$ is $\mathcal{H}$-STAR \textquotedblright.
\end{itemize}
\end{definition}

For instance, let us consider system (\ref{eq:small-time-oscillator_BIS}), 
with state space $\mathcal{H}=L^2(\R^d,\C)$. If the set of constant controls is admissible then, for every $\alpha_1, \alpha_2 \in \R$ and $\sigma>0$, 
the operator $e^{i\sigma(\Delta-V-\alpha_1 |x|^2-\alpha_2 W_2)}$
is $L^2$-exactly reachable in time $\sigma$. 
Indeed, if $u=(u_1,u_2):(0,\sigma) \rightarrow \R^2$ is the constant function $(\alpha_1,\alpha_2)$ then, for every $\psi_0 \in L^2(\T^d,\C)$, we have $\psi(\sigma;u,\psi_0)=e^{i\sigma(\Delta-V-\alpha_1 |x|^2-\alpha_2 W_2)}\psi_0$.

\begin{lemma}\label{lem:reachable-operators}
\begin{enumerate}
\item If $L_1$ is $\mathcal{H}$-approximately (or $\mathcal{H}$-exactly) reachable in time $T_1$, and $L_2$ is $\mathcal{H}$-approximately (or $\mathcal{H}$-exactly) reachable in time $T_2$, then $L_2 L_1$ is $\mathcal{H}$-approximately (or $\mathcal{H}$-exactly) reachable in time $T_1+T_2$. In particular, if $L_1,L_2$ are $\mathcal{H}$-STAR, then $L_2L_1$ also is.  
\item Let $T \geq 0$ and $(L_n)_{n\in\N}$ be a sequence of operators that are $\mathcal{H}$-approximately reachable in time $T^+$ and $L \in \mathcal{L}(\mathcal{H})$ such that, for every $\psi \in \mathcal{H}$, 
$\|(L_n -L)\psi\|_{\mathcal{H}} \rightarrow 0$. Then the operator $L$ is $\mathcal{H}$-approximately reachable in time $T^+$.
\item The set of $\mathcal{H}$-STAR operators is a subsemigroup 
of $\mathcal{L}(\mathcal{H}) \cap \text{Isom}(L^2(M,\C))$ closed for the topology of the strong convergence.
\end{enumerate}
\end{lemma}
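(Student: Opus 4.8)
The plan is to establish the three assertions in order, since each one builds on the previous. For assertion (1), I would unwind the definitions directly. Suppose $L_1$ is $\mathcal{H}$-approximately reachable in time $T_1$ and $L_2$ in time $T_2$. Fix $\psi_0 \in \mathcal{H}\cap\mathcal{S}$ and $\varepsilon>0$. The key point is that $\psi(T_1;u_1,\cdot)$ is a \emph{bounded} operator on $\mathcal{H}$ (this is part of the definition of admissible controls), so it has some operator norm $C_1$; I would first pick $u_1 \in \mathcal{U}(0,T_1)$ so that $\|\psi(T_1;u_1,\psi_0)-L_1\psi_0\|_{\mathcal{H}}$ is small, then — since $L_1\psi_0 \in \mathcal{H}\cap\mathcal{S}$ — pick $u_2 \in \mathcal{U}(0,T_2)$ so that $\|\psi(T_2;u_2,L_1\psi_0)-L_2L_1\psi_0\|_{\mathcal{H}}$ is small. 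Concatenating $u_1\sharp u_2 \in \mathcal{U}(0,T_1+T_2)$ (admissibility is stable by concatenation) and using the flow property $\psi(T_1+T_2;u_1\sharp u_2,\psi_0)=\psi(T_2;u_2,\psi(T_1;u_1,\psi_0))$, a triangle inequality together with the bound $\|\psi(T_2;u_2,\cdot)\|_{\mathcal{L}(\mathcal{H})}\le C_2$ (or the isometry property of $L_2$, whichever is cleaner) gives the claim; one chooses the two intermediate tolerances as $\varepsilon/(2C_2)$ and $\varepsilon/2$. The exact case is the same computation with exact equalities. The ``in particular'' for STAR follows because one can take $T_1,T_2$ each in $[0,\varepsilon/2]$, so $T_1+T_2\in[0,\varepsilon]$.

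For assertion (2), fix $\psi_0\in\mathcal{H}\cap\mathcal{S}$ and $\varepsilon>0$. By the strong convergence hypothesis, choose $n$ with $\|(L_n-L)\psi_0\|_{\mathcal{H}}<\varepsilon/2$. Since $L_n$ is $\mathcal{H}$-approximately reachable in time $T^+$, there is $T_1\in[T,T+\varepsilon]$ and $u\in\mathcal{U}(0,T_1)$ with $\|\psi(T_1;u,\psi_0)-L_n\psi_0\|_{\mathcal{H}}<\varepsilon/2$. Then $\|\psi(T_1;u,\psi_0)-L\psi_0\|_{\mathcal{H}}<\varepsilon$ by the triangle inequality, which is exactly $\mathcal{H}$-approximate reachability of $L$ in time $T^+$. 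One subtlety I would flag: reachability in time $T^+$ allows the time $T_1$ to depend on $\psi_0$ and $\varepsilon$, so there is no uniformity issue to worry about here — the definition is already pointwise in the state.

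For assertion (3), I would assemble the pieces. Closure under composition is the ``in particular'' clause of (1) with $T=0$, giving the subsemigroup property. Closure under strong convergence is (2) with $T=0$: if $L_n$ are $\mathcal{H}$-STAR and $L_n\to L$ strongly, then $L$ is $\mathcal{H}$-STAR, and $L$ automatically lies in $\mathcal{L}(\mathcal{H})$ by hypothesis. It remains to check that every $\mathcal{H}$-STAR operator $L$ is an isometry of $L^2(M,\C)$: each approximating solution $\psi(T_1;u,\psi_0)$ has $L^2$-norm $1$ (solutions live in $\mathcal{S}$), and $\psi(T_1;u,\psi_0)\to L\psi_0$ in the $\mathcal{H}$-norm; if the embedding $\mathcal{H}\hookrightarrow L^2$ is continuous this forces $\|L\psi_0\|_{L^2}=1$ for all $\psi_0\in\mathcal{H}\cap\mathcal{S}$, and by homogeneity $L$ is an $L^2$-isometry on the dense-or-not subset $\mathcal{H}$; since $L\in\mathcal{L}(\mathcal{H})$ this is the stated conclusion. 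I expect the main (minor) obstacle to be bookkeeping the two places where one needs an operator to be bounded or isometric in order to push an $\varepsilon$ through a composition in assertion (1) — that is, being careful about which norm ($\mathcal{H}$ versus $L^2$) controls which estimate, and invoking the definition of admissible controls (boundedness of the time-$T$ flow on $\mathcal{H}$) at exactly the right moment; everything else is a routine triangle-inequality argument.
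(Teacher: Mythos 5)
Your plan follows the same lines as the paper's proof, and the essential ideas (concatenation, two-term triangle inequality, extraction of a tail term from the strong limit) are all in place. There is, however, an ordering slip in Part 1 that makes the argument circular as literally written: you say you "first pick $u_1$\dots then pick $u_2$," yet the tolerance you prescribe for $u_1$ is $\varepsilon/(2C_2)$ with $C_2 = \|\psi(T_2;u_2,\cdot)\|_{\mathcal{L}(\mathcal{H})}$, a quantity that cannot be known before $u_2$ is fixed. Since you also choose $u_2$ to approximate $L_2$ at the state $L_1\psi_0$ (rather than at $\psi(T_1;u_1,\psi_0)$), the triangle inequality you are implicitly using is
\[
\|\psi(T_2;u_2,\phi)-L_2L_1\psi_0\| \le \|\psi(T_2;u_2,\cdot)\|_{\mathcal{L}(\mathcal{H})}\,\|\phi-L_1\psi_0\| + \|\psi(T_2;u_2,L_1\psi_0)-L_2L_1\psi_0\|,
\]
with $\phi=\psi(T_1;u_1,\psi_0)$, and this forces the paper's order: pick $u_2$ first (possible because $L_1\psi_0\in\mathcal{H}\cap\mathcal{S}$, $L_1$ being a bounded $L^2$-isometry), then pick $u_1$ against the now-determined $C_2$. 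Your parenthetical "or the isometry property of $L_2$" would not rescue the order either, because $L_2$ is only an $L^2$-isometry, not an $\mathcal{H}$-isometry; the a-priori constant that does allow choosing $u_1$ first is $\|L_2\|_{\mathcal{L}(\mathcal{H})}$, but that requires the different split $\|\psi(T_2;u_2,\phi)-L_2\phi\|+\|L_2\|_{\mathcal{L}(\mathcal{H})}\|\phi-L_1\psi_0\|$ and then $u_2$ must be chosen for the state $\phi$, not $L_1\psi_0$. Either ordering works once written consistently; what is not permissible is mixing them. Parts 2 and 3 are correct and match the paper; the extra verification in Part 3 that each $\mathcal{H}$-STAR operator is an $L^2$-isometry is harmless but not needed, as this property is built into Definition \ref{Def:STAR} rather than something to be derived.
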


\begin{proof}
\emph{1.} Let $\psi_0 \in \mathcal{H} \cap \mathcal{S}$ and $\varepsilon>0$. There exists $u_2 \in \mathcal{U}(0,T_2)$ such that
\begin{equation} \label{def:T2u2}
\| \psi(T_2; u_2, L_1 \psi_0 ) -  L_2 L_1 \psi_0 \|_{\mathcal{H}}< \frac{\varepsilon}{2}.
\end{equation}
There exists $u_1 \in \mathcal{U}(0,T_1)$ such that
\begin{equation} \label{def:T1u1}
\| \psi(T_1;u_1,\psi_0)- L_1 \psi_0 \|_{\mathcal{H}} < \frac{\varepsilon}{2 \| \psi(T_2;u_2,.)\|_{\mathcal{L}(\mathcal{H})}}.
\end{equation}
Then 
$u:=u_1 \sharp u_2 \in \mathcal{U}(0,T_1+T_2)$ 
by Definition \ref{Def:adm_cont}.
Moreover, by using the triangular inequality, Definition \ref{Def:adm_cont}, (\ref{def:T2u2}) and (\ref{def:T1u1}), we obtain
$$\begin{aligned}
& \| \psi(T_1+T_2;u,\psi_0) -  L_2 L_1 \psi_0 \|_{\mathcal{H}} 
\\ \leq & \| \psi(T_2;u_2,\psi(T_1;u_1,\psi_0))- \psi(T_2;u_2, L_1 \psi_0) \|_{\mathcal{H}} + \| \psi(T_2;u_2, L_1 \psi_0)-   L_2 L_1 \psi_0\|_{\mathcal{H}}
\\  \leq & \| \psi(T_2;u_2,.) \|_{\mathcal{L}(\mathcal{H})} \| \psi(T_1;u_1,\psi_0)-  L_1 \psi_0 \|_{\mathcal{H}} +  
\| \psi(T_2;u_2, L_1 \psi_0)-   L_2 L_1 \psi_0\|_{\mathcal{H}}
<  \varepsilon.
\end{aligned}$$
\noindent \emph{2.} Let $\psi_0 \in \mathcal{H} \cap \mathcal{S}$ and $\varepsilon>0$. There exists $n \in \N$ such that 
$\|(L_n -L)\psi_0\|_{\mathcal{H}} < \varepsilon/2$. There exists $T_1 \in [T,T+\epsilon]$, and $u\in\mathcal{U}(0,T_1)$ such that $\| \psi(T_1;u,\psi_0)- L_n \psi_0 \|_{\mathcal{H}} < \varepsilon/2$. Then, by triangular inequality
$$
\| \psi(T_1;u,\psi_0)-L \psi_0 \|_{\mathcal{H}}
\leq \| \psi(T_1;u,\psi_0)- L_n \psi_0 \|_{\mathcal{H}} + \| (L_n -L)\psi_0\|_{\mathcal{H}} < \varepsilon.
$$
\noindent \emph{3.} is a consequence of \emph{1.} and \emph{2.}

\end{proof}
    
This Lemma proves that the composition of an arbitrary number
(resp. the strong limit) of $\mathcal{H}$-STAR operators is a $\mathcal{H}$-STAR operator. These basic facts will be massively used in this article.

\subsection{Well posedness for piecewise constant controls}
\label{Subsec:WP_pwcst}

\begin{proposition}\label{prop:self-adjointness}{\cite[Corollary page 199]{rs2}}
Let 
$V$ satisfying (\ref{Hyp:V_transp}).
Then 
$-\Delta+V$
is essentially self-adjoint on $C^\infty_c(\R^d,\C)$.
\end{proposition}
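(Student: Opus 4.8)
Since Proposition~\ref{prop:self-adjointness} merely quotes a classical theorem, the plan is to recall the structure of its proof, which combines a result of Kato on nonnegative singular potentials with the Faris--Lavine commutator criterion; both ingredients are available in \cite{rs2}. I would organize the argument in two steps.

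\emph{Step 1: nonnegative potentials.} First I would prove the statement for $0\le V\in L^2_{\rm loc}(\R^d,\R)$. Then $-\Delta+V$ is symmetric and nonnegative on $C^\infty_c(\R^d,\C)$, so essential self-adjointness is equivalent to the density of the range of $-\Delta+V+1$, i.e.\ to $\ker\big((-\Delta+V)^*+1\big)=\{0\}$. If $u\in L^2(\R^d,\C)$ lies in that kernel, then $\Delta u=(V+1)u$ in $\mathcal D'(\R^d)$ (and $Vu\in L^1_{\rm loc}$, so the equation makes sense), and Kato's distributional inequality gives
$$\Delta|u|\ge\operatorname{Re}\big(\overline{\operatorname{sgn}(u)}\,\Delta u\big)=(V+1)|u|\ge|u|,$$
that is $(-\Delta+1)|u|\le 0$ in $\mathcal D'(\R^d)$; since $(-\Delta+1)^{-1}$ is positivity preserving, this forces $|u|=0$. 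This is Kato's theorem, valid in every dimension $d$.

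\emph{Step 2: lowering the potential by a quadratic, via Faris--Lavine.} For a general $V$ satisfying \eqref{Hyp:V_transp}, I would introduce the comparison operator
$$N:=-\Delta+V+2a|x|^2+2b+1,$$
whose potential $V+2a|x|^2+2b+1\ge a|x|^2+b+1>0$ is nonnegative and still in $L^2_{\rm loc}(\R^d,\R)$; hence, by Step~1, $N$ is essentially self-adjoint on $C^\infty_c(\R^d,\C)$, its closure satisfies $\overline N\ge 1$, and $C^\infty_c(\R^d,\C)$ is a core for $\overline N$. I would then apply the Faris--Lavine criterion to $A:=-\Delta+V=N-(2a|x|^2+2b+1)$: on the core $C^\infty_c(\R^d,\C)$, $A$ is symmetric and $N$-bounded (the multiplication term $2a|x|^2+2b+1$ being dominated by $N$), and in the commutator form $(A\phi,N\phi)-(N\phi,A\phi)$ all contributions of $V$ cancel, leaving only the first-order operator
$$i[A,N]=2ai\,[-\Delta,|x|^2]=-4ai\,(x\!\cdot\!\nabla+\nabla\!\cdot\!x),$$
which satisfies $\pm\,i[A,N]\le c\,N$ on $C^\infty_c(\R^d,\C)$ because $\|\nabla\phi\|^2\le\langle\phi,N\phi\rangle$ and $\big\|\,|x|\phi\,\big\|^2\le a^{-1}\langle\phi,N\phi\rangle$. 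The Faris--Lavine theorem then yields that $A=-\Delta+V$ is essentially self-adjoint on any core for $\overline N$, in particular on $C^\infty_c(\R^d,\C)$.

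\emph{Main obstacle.} The genuinely delicate point --- and the reason a direct Kato--Rellich perturbation of $-\Delta$ is unavailable --- is the very low regularity of $V$: being only $L^2_{\rm loc}$, it is not relatively $(-\Delta)$-bounded in general, and the expression $[V,-\Delta]$ has no classical meaning. This is precisely why Step~1 must be routed through Kato's distributional inequality rather than an operator estimate, and why in Step~2 one should absorb the \emph{entire} nonnegative $L^2_{\rm loc}$ part of the potential into the reference operator $N$, so that only the smooth quadratic tail $a|x|^2$ survives in the commutator --- and $[-\Delta,|x|^2]$, being a harmless first-order operator, is readily controlled by $N$.
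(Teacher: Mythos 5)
Your proposal is correct and reconstructs precisely the argument behind the cited result: the paper offers no proof of its own but invokes the Corollary on p.~199 of Reed--Simon~II, whose proof is exactly the two-step scheme you describe (Kato's distributional inequality for nonnegative $L^2_{\rm loc}$ potentials, followed by the Faris--Lavine/Nelson commutator criterion applied to the reference operator $N=-\Delta+V+2a|x|^2+2b+1$, whose potential is again nonnegative so that Step~1 makes $C^\infty_c$ a core for $N$). The only place where you gloss slightly is the parenthetical assertion that the multiplication operator $2a|x|^2+2b+1$ is $N$-bounded in the operator sense $\||x|^2\phi\|\lesssim\|N\phi\|$: for $V$ merely in $L^2_{\rm loc}$ this is not an immediate form inequality and requires the somewhat careful integration-by-parts estimate carried out in Reed--Simon, whereas the commutator bound you write down is indeed the straightforward part; apart from that, the structure, the choice of $N$, and the identification of the low regularity of $V$ as the real obstruction are all exactly right.
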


\begin{definition}
Given two densely defined linear operators $A$ and $B$ with domains $D(A)$ and $D(B)$ on an Hilbert space $\mathcal{H}$, $B$ is said to be $A$-bounded if $D(A)\subset D(B)$ and
there exist $a,b\geq 0$ such that for all $\psi\in D(A)$
$$\|B\psi\|<a\|A\psi\|+b\|\psi\|.$$
The infimum of such $a$ is called the relative bound of $B$.
\end{definition}

\begin{proposition}\label{prop:kato-rellich}{(Kato-Rellich Theorem)\cite[Theorem X.12]{rs2}}
If $A$ is self-adjoint and $B$ is symmetric and $A$-bounded with relative bound $a<1$, then $A+B$ is self-adjoint on $D(A)$ and essentially self-adjoint on any core of $A$.
\end{proposition}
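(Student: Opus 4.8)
The plan is to reduce self-adjointness of $A+B$ on $D(A)$ to the basic criterion: a symmetric operator $T$ is self-adjoint as soon as $\mathrm{Ran}(T+i\mu)=\mathrm{Ran}(T-i\mu)=\mathcal{H}$ for some $\mu>0$. First I would note that $A+B$ is symmetric on $D(A)$: indeed $A$ is symmetric on $D(A)$, $B$ is symmetric, and $D(A)\subset D(B)$, so $A+B$ is well-defined and symmetric on $D(A)$. The heart of the argument is the algebraic factorization
$$A+B\pm i\mu=\big(I+B(A\pm i\mu)^{-1}\big)(A\pm i\mu)\qquad\text{on }D(A),$$
which makes sense because $A$ self-adjoint implies $(A\pm i\mu)^{-1}\in\mathcal{L}(\mathcal{H})$ and maps $\mathcal{H}$ bijectively onto $D(A)$. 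Consequently $\mathrm{Ran}(A+B\pm i\mu)=\mathcal{H}$ will follow once $I+B(A\pm i\mu)^{-1}$ is boundedly invertible on $\mathcal{H}$, and a sufficient condition for that is $\|B(A\pm i\mu)^{-1}\|_{\mathcal{L}(\mathcal{H})}<1$ (Neumann series).

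\textbf{The key estimate.} Using the $A$-boundedness hypothesis with relative bound $a<1$ together with the spectral theorem for $A$, I would bound, for $\psi\in\mathcal{H}$ and $\phi:=(A\pm i\mu)^{-1}\psi\in D(A)$,
$$\|B\phi\|\le a\|A\phi\|+b\|\phi\|\le a\,\|A(A\pm i\mu)^{-1}\|\,\|\psi\|+b\,\|(A\pm i\mu)^{-1}\|\,\|\psi\|\le\Big(a+\tfrac{b}{\mu}\Big)\|\psi\|,$$
since $\sup_{\lambda\in\R}|\lambda|/\sqrt{\lambda^2+\mu^2}\le 1$ and $\sup_{\lambda\in\R}1/\sqrt{\lambda^2+\mu^2}=1/\mu$. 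As $a<1$, choosing $\mu$ large enough yields $a+b/\mu<1$, hence $\mathrm{Ran}(A+B\pm i\mu)=\mathcal{H}$; by the basic criterion $A+B$ is then self-adjoint, and since the factorization also shows this range is attained starting from $D(A)$, one gets $D(A+B)=D(A)$.

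\textbf{Essential self-adjointness on a core.} Finally, let $D$ be a core of $A$. I would show that the graph norms of $A$ and of $A+B$ are equivalent on $D(A)$: from $\|(A+B)\psi\|\le(1+a)\|A\psi\|+b\|\psi\|$ and, conversely, $(1-a)\|A\psi\|\le\|(A+B)\psi\|+b\|\psi\|$ (obtained by writing $A\psi=(A+B)\psi-B\psi$ and using $A$-boundedness), one gets that the two graph norms are comparable on $D(A)$. Therefore $D$, being dense in $D(A)$ for the $A$-graph norm, is dense for the $(A+B)$-graph norm as well, so the closure of $(A+B)|_D$ equals $(A+B)|_{D(A)}=A+B$, which is self-adjoint; hence $(A+B)|_D$ is essentially self-adjoint.

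\textbf{Main obstacle.} The delicate point is really the first step: recognizing the factorization $A+B\pm i\mu=(I+B(A\pm i\mu)^{-1})(A\pm i\mu)$ and that perturbing the identity operator by an operator of norm $<1$ preserves surjectivity. Once this is in place, the norm bound $\|B(A\pm i\mu)^{-1}\|\le a+b/\mu$ is a one-line spectral-calculus computation, and the remaining arguments (the basic self-adjointness criterion, graph-norm equivalence) are routine bookkeeping.
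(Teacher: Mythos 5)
Your proof is correct and is precisely the standard argument in the cited reference \cite[Theorem X.12]{rs2}; the paper itself does not reprove this classical fact but simply cites Reed--Simon, so there is nothing to compare beyond noting that your factorization $A+B\pm i\mu=(I+B(A\pm i\mu)^{-1})(A\pm i\mu)$, the Neumann-series step, the spectral bound $\|B(A\pm i\mu)^{-1}\|\le a+b/\mu<1$ for $\mu$ large, and the graph-norm equivalence for the core statement are exactly the ingredients used there.
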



In light of Propositions \ref{prop:self-adjointness} and
\ref{prop:kato-rellich}, we can define the solutions 
of the systems 
\eqref{eq:small-time-oscillator_BIS},
\eqref{eq:oscillator_BIS},
associated with piecewise constant controls, by composition of time-independent unitary propagators associated with self-adjoint operators (see, e.g., \cite[Definition p.256 \& Theorem VIII.7]{rs1}). For instance, for system \eqref{eq:small-time-oscillator_BIS}, 
given a subdivision $0=t_0<\dots<t_N=T$,
a piecewise constant control 
$u:[0,T]\to \mathbb{R}^{2}$ defined as 
$u(t)=(u^{j}_1,u^{j}_2)\in\mathbb{R}^{2}$ when 
$t \in [t_{j-1},t_j]$, 
and an initial condition $\psi_0 \in L^2(\R^d,\C)$,
the solution $\psi\in C^0([0,T],L^2(\R^d,\mathbb{C}))$ of \eqref{eq:small-time-oscillator} is defined by
\begin{equation}\label{eq:propagator}
\psi(t;u,\psi_0)=
e^{i(t-t_{j-1})(\Delta-V-u_1^{j}|x|^2-u_2^j W_2)}
e^{i \tau_{j-1}(\Delta-V-u_1^{j-1} |x|^2-u_2^{j-1}W_2)}
\dots
e^{i\tau_1 (\Delta-V-u_1^1|x|^2-u_2^1 W_2)} \psi_0.
\end{equation}
where $\tau_l=(t_l-t_{l-1})$ for $l=1,\dots,N$.


\subsection{A conjugation formula}

\begin{proposition}\label{lem:conjugation}
Let $A,B$ be self-adjoint operators on the Hilbert space $\mathcal{H}$, and suppose that $e^{iB}$ is an isomorphism of $D$, where $D$ is a core for $A$. Then, for any $t\in \R$,
$$e^{-iB}e^{itA}e^{iB} =\exp(e^{-iB}itAe^{iB}).$$
\end{proposition}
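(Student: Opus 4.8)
The identity to establish is $e^{-iB}e^{itA}e^{iB}=\exp\bigl(e^{-iB}(itA)e^{iB}\bigr)$, where $e^{-iB}(itA)e^{iB}$ is shorthand for the operator $it\,\widetilde A$ with $\widetilde A := e^{-iB}Ae^{iB}$ defined on $D(\widetilde A):=e^{-iB}D(A)$. The plan is to first verify that $\widetilde A$ is self-adjoint, then verify that $t\mapsto e^{-iB}e^{itA}e^{iB}$ is a strongly continuous one-parameter unitary group, compute its generator, and conclude by Stone's theorem. The role of the hypothesis that $e^{iB}$ is an isomorphism of a core $D$ for $A$ is precisely to get a \emph{genuinely self-adjoint} conjugated operator rather than merely a symmetric one: conjugation by a unitary always preserves self-adjointness on the transported domain $e^{-iB}D(A)$, but we want a description of $\widetilde A$ in terms of its action on the more concrete set $D$, and for that we need $e^{iB}D=D$ so that $D$ is a core for $\widetilde A$ as well.

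First I would set $U(t):=e^{-iB}e^{itA}e^{iB}$ and observe that each $U(t)$ is unitary (composition of unitaries), that $U(0)=I$, that $U(t)U(s)=U(t+s)$ (the inner factors $e^{iB}e^{-iB}$ cancel), and that $t\mapsto U(t)\psi$ is continuous for every $\psi\in\mathcal H$, since $e^{itA}$ is strongly continuous and $e^{\pm iB}$ are bounded. Hence by Stone's theorem $U(t)=e^{it\widehat A}$ for a unique self-adjoint operator $\widehat A$, whose domain consists of those $\psi$ for which $\lim_{t\to0}\frac{1}{it}(U(t)\psi-\psi)$ exists, the limit being $\widehat A\psi$. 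Next I would show $\widehat A = \widetilde A := e^{-iB}Ae^{iB}$ with $D(\widehat A)=e^{-iB}D(A)$: for $\psi = e^{-iB}\phi$ with $\phi\in D(A)$, we have $\frac{1}{it}(U(t)\psi - \psi) = e^{-iB}\cdot\frac{1}{it}(e^{itA}\phi-\phi)\to e^{-iB}(A\phi) = \widetilde A\psi$, so $e^{-iB}D(A)\subseteq D(\widehat A)$ and the two operators agree there; since $\widetilde A$ on $e^{-iB}D(A)$ is already self-adjoint (being unitarily conjugate to the self-adjoint operator $A$ on $D(A)$) and $\widehat A$ is self-adjoint and extends it, the two coincide. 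This gives $U(t)=e^{it\widetilde A}$, which is the claimed formula once one notes that $\exp(e^{-iB}(itA)e^{iB})$ is by definition $e^{it\widetilde A}$.

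The remaining point, and the only place the core hypothesis is genuinely used, is to justify that the notation $e^{-iB}(itA)e^{iB}$ refers unambiguously to $\widetilde A$ and that one may compute the exponential by restricting attention to $D$. Here I would argue: since $D$ is a core for $A$ and $e^{iB}:D\to D$ is a bijection, $e^{-iB}D = D$, and for any $\psi\in D$ one has $e^{iB}\psi\in D\subseteq D(A)$, so $\widetilde A\psi = e^{-iB}Ae^{iB}\psi$ is defined by a literal composition on $D$; moreover $D$ is a core for $\widetilde A$ because $e^{-iB}$ maps the core $D$ of $A$ onto $D$ and intertwines the graph norms (again using that $e^{-iB}$ is an isomorphism of $D$, hence bicontinuous for the graph topology it carries). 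Thus $\widetilde A$ is the self-adjoint closure of its restriction to $D$, which is exactly what "$e^{-iB}(itA)e^{iB}$" should mean.

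The main obstacle — really the only subtlety — is the domain bookkeeping: making sure that $e^{-iB}Ae^{iB}$ is self-adjoint on the transported domain and not just essentially self-adjoint or symmetric, and that the hypothesis "$e^{iB}$ is an isomorphism of a core $D$ for $A$" is strong enough to identify $D$ as a core for the conjugated operator. Everything else (group property, strong continuity, Stone's theorem, the difference-quotient computation) is routine. One should also be mildly careful that "isomorphism of $D$" is read as a topological isomorphism for the graph norm of $A$ (the natural topology on a core), which is what makes the core-transport argument go through; I would state this explicitly at the start of the proof.
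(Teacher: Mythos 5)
Your proof is correct. The route is organized differently from the paper's, though the mathematical content is closely related. The paper starts from the right-hand side: having observed that $L:=e^{-iB}Ae^{iB}$ is essentially self-adjoint on $D$, it sets $\psi(t)=e^{itL}\psi_0$ for $\psi_0\in D$, shows $\Psi(t):=e^{iB}\psi(t)$ solves the abstract Schr\"odinger equation $\frac{d}{dt}\Psi = iA\Psi$ with $\Psi(0)=e^{iB}\psi_0$, and reads off the identity from uniqueness of solutions. You instead start from the left-hand side $U(t)=e^{-iB}e^{itA}e^{iB}$, verify directly that it is a strongly continuous unitary group, invoke Stone's theorem, and identify the generator by computing difference quotients and appealing to maximality of self-adjoint operators. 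Both are valid; the paper's ODE argument is a bit shorter, while yours makes the self-adjointness bookkeeping more explicit and spells out clearly why the core hypothesis matters (transport of the core $D$ and of the graph topology under $e^{\pm iB}$). Your closing remark about reading ``isomorphism of $D$'' as a graph-norm isomorphism is a reasonable reading and is consistent with the paper's usage, but note the paper's argument only needs the set-theoretic bijection $e^{iB}D=D$ plus the fact that $e^{\pm iB}$ are bounded on $\mathcal H$; this already transports cores, so the stronger topological reading, while harmless, is not strictly required.
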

\begin{proof}
Since $e^{iB}:D\to D$ is an isomorphism, $L:=e^{-iB}Ae^{iB}$ is essentially self-adjoint on $D$, hence, $e^{itL}:=\exp(e^{-iB}itAe^{iB})$ is well-defined for $t\in\R$. For $\psi_0 \in D$,
$$
\psi(t):=e^{itL}\psi_0 
\quad \text{ and } \quad 
\Psi(t):=e^{ iB}\psi(t)
$$
solve
$$
\begin{cases}
\frac{d}{dt}\psi(t)=i L\psi(t), \\
 \psi(0)=\psi_0,&
\end{cases}
\quad \text{ and } \quad
\begin{cases}
\frac{d}{dt}\Psi(t)=iA\,\Psi(t), &\\
 \Psi(0)=e^{ iB}\psi_0,&
\end{cases}
 $$
thus $\Psi(t)=e^{it A} e^{iB} \psi_0$ and
$e^{itL} \psi_0=e^{-iB} e^{it A} e^{iB} \psi_0$.
This holds for every $\psi_0 \in D$, thus $e^{itL}=e^{-iB} e^{it A} e^{iB}$.
\end{proof}


\subsection{A convergence property}

\begin{proposition}{\cite[Theorem VIII.21 \& Theorem VIII.25(a)]{rs1}}\label{prop:trotter}
Let $(A_n)_{n\in\N},A$ be self-adjoint operators on an Hilbert space $\mathcal{H}$, with a common core $D$. If $\| (A_n-A)\psi \|_{\mathcal{H}} \underset{n \to\infty}{\longrightarrow} 0$ for any $\psi\in D$, then 
$\| (e^{iA_n}-e^{iA})\psi \|_{\mathcal{H}}\underset{n \to\infty}{\longrightarrow} 0$ for any $\psi\in \mathcal{H}$.
\end{proposition}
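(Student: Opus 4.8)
The plan is to prove the statement labeled Proposition \ref{prop:trotter}, which is the statement that strong convergence of self-adjoint operators on a common core implies strong convergence of the associated unitary groups (at time one). This is a standard result (Trotter--Kato type), and I would prove it by the classical resolvent-convergence argument.

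\medskip

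\textbf{Step 1: From core convergence to strong resolvent convergence.} First I would show that for $z \in \C \setminus \R$, $(A_n - z)^{-1} \to (A - z)^{-1}$ strongly. The key identity is the second resolvent formula,
$$
(A_n - z)^{-1} - (A - z)^{-1} = (A_n - z)^{-1}(A - A_n)(A - z)^{-1}.
$$
Fix $\varphi \in \mathcal{H}$. Since $D$ is a core for the self-adjoint operator $A$, the range of $(A-z)^{-1}$ restricted to suitable vectors is dense in $D(A)$ in the graph norm; more simply, vectors of the form $\psi = (A-z)^{-1}\chi$ with $\chi$ ranging over a dense set can be approximated in graph norm by elements of $D$. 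So I would first take $\psi \in D$: then $(A-z)^{-1}\varphi$ can be approximated by such $\psi$, and for $\psi \in D$ we have $\|(A-A_n)\psi\| \to 0$ by hypothesis, while $\|(A_n-z)^{-1}\| \le 1/|\mathrm{Im}\, z|$ uniformly in $n$. A density/$\varepsilon$-argument then gives $\|((A_n-z)^{-1} - (A-z)^{-1})\varphi\| \to 0$ for all $\varphi \in \mathcal{H}$, i.e.\ strong resolvent convergence.

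\medskip

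\textbf{Step 2: From strong resolvent convergence to strong convergence of the unitary groups.} This is the content of the Trotter--Kato theorem. The cleanest route is to use that $e^{itA}$ can be recovered from the resolvent: for instance, one can write $e^{iA}$ via a Cauchy-integral / functional-calculus limit, or use the Laplace-transform representation together with a density argument, or simply invoke that strong resolvent convergence is equivalent to $f(A_n) \to f(A)$ strongly for all bounded continuous $f$, applied to $f(\lambda) = e^{i\lambda}$. If I want to be self-contained I would instead estimate directly: for $\psi \in D$,
$$
\frac{d}{ds}\left(e^{i(1-s)A_n} e^{isA}\psi\right) = e^{i(1-s)A_n} i (A - A_n) e^{isA}\psi,
$$
so that
$$
(e^{iA_n} - e^{iA})\psi = \int_0^1 e^{i(1-s)A_n} i(A-A_n)e^{isA}\psi \, ds.
$$
Here one must be careful: $e^{isA}\psi$ need not lie in $D$. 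So one cannot apply the hypothesis $\|(A-A_n)\cdot\| \to 0$ directly to $e^{isA}\psi$. This is the main obstacle. The fix is precisely why the resolvent detour is cleaner: replace $A - A_n$ by $(A_n+i)\big((A_n+i)^{-1} - (A+i)^{-1}\big)(A+i)$ wherever it acts on vectors in $D(A)$, push the bounded resolvent-difference factor (which goes to $0$ strongly by Step 1) to act on the fixed vector $(A+i)e^{isA}\psi = e^{isA}(A+i)\psi$, and bound the remaining $(A_n+i)$ against the unitary on its left. Then a dominated-convergence argument in $s \in [0,1]$, together with the uniform bound on the unitaries, gives $\|(e^{iA_n}-e^{iA})\psi\| \to 0$ for $\psi \in D(A)$, and a final density argument (using that all the $e^{iA_n}$ and $e^{iA}$ are unitary, hence equicontinuous) extends this to all $\psi \in \mathcal{H}$.

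\medskip

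The genuinely delicate point throughout is the interplay between the two operator topologies: convergence is only assumed on the common core $D$, not on all of $D(A)$, and the semigroups do not preserve $D$. Handling this requires either the resolvent reformulation (which automatically upgrades core-convergence to convergence on all of $D(A)$ via Step 1's density argument), or an equivalent careful bookkeeping of domains. Everything else is routine: uniform bounds on unitaries and resolvents of self-adjoint operators, the second resolvent identity, and $\varepsilon/3$ density arguments.
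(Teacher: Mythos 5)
The paper does not prove this proposition; it imports it directly from Reed--Simon, citing Theorem VIII.21 (core convergence of self-adjoint operators implies strong resolvent convergence) and Theorem VIII.25(a) (strong resolvent convergence implies $f(A_n)\to f(A)$ strongly for bounded continuous $f$, applied here to $f(\lambda)=e^{i\lambda}$). Your two-step reconstruction is exactly this decomposition, and you correctly identify the delicate domain issue (convergence only on $D$, not on all of $D(A)$, and the unitaries do not preserve $D$) that makes the resolvent detour the right strategy, so your proposal matches the approach of the cited reference.
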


\subsection{Trotter-Kato product formula}

\begin{proposition}{(Trotter-Kato product formula) \cite[Theorem VIII.30]{rs1}}\label{prop:trotter-kato}
Let $A,B$ be self-adjoint operators on a Hilbert space $\mathcal{H}$ such that $A+B$ is self-adjoint on $D(A)\cap D(B)$. Then, for every $\psi_0 \in \mathcal{H}$,
$$ \left\| \left(e^{i\frac{A}{n}}e^{i\frac{B}{n}}\right)^n\psi_0 - e^{i(A+B)}\psi_0\right\|_{\mathcal{H}} \underset{n \to + \infty}{\longrightarrow} 0.$$
\end{proposition}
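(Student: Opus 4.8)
The plan is to run the classical telescoping argument underlying the Trotter product formula — equivalently, to invoke Chernoff's product formula for the family $W(s):=e^{isA}e^{isB}$. Write $D:=D(A)\cap D(B)$, which by hypothesis is the domain of the self-adjoint operator $A+B$, and set $F(s):=e^{isA}e^{isB}$, $U(s):=e^{is(A+B)}$. By density of $D$ in $\mathcal H$, the fact that all operators in sight are unitary (hence norm-one), and a $3\varepsilon$ argument, it suffices to prove the convergence for $\psi_0=\phi\in D$; note also that $U(\cdot)$ preserves $D$, since $A+B$ is self-adjoint on it.

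First I would check that each $\phi\in D$ lies in the ``domain of the generator of the product'', i.e. $s^{-1}(F(s)-I)\phi\to i(A+B)\phi$ as $s\to0$. Decomposing $F(s)\phi-\phi=(e^{isA}-I)\phi+(e^{isA}-I)(e^{isB}\phi-\phi)+(e^{isB}\phi-\phi)$, the outer two terms converge, after division by $s$, to $iA\phi$ and $iB\phi$ by Stone's theorem; for the middle term one writes $e^{isB}\phi-\phi=s\zeta_s$ with $\zeta_s\to iB\phi$ in $\mathcal H$, so that $s^{-1}(e^{isA}-I)(s\zeta_s)=(e^{isA}-I)\zeta_s\to0$ by strong continuity. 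Since $U$ is the unitary group generated by $i(A+B)$, Stone also gives $s^{-1}(U(s)-I)\phi\to i(A+B)\phi$; hence $\|(F(s)-U(s))\phi\|=o(s)$ for each fixed $\phi\in D$.

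Next comes the telescoping. With $F:=F(1/n)$ and $V:=U(1/n)$ — both unitary — the algebraic identity $F^n-V^n=\sum_{k=0}^{n-1}F^k(F-V)V^{n-1-k}$ together with $\|F^k\|=1$ yields
$$\bigl\|(e^{iA/n}e^{iB/n})^n\phi-e^{i(A+B)}\phi\bigr\|\le\sum_{j=0}^{n-1}\bigl\|(F-V)\,U(j/n)\phi\bigr\|.$$
Each vector $U(j/n)\phi$ again lies in $D$, so the pointwise estimate applies to every summand; if moreover $\sup_{0\le s\le1}\|(F(1/n)-U(1/n))U(s)\phi\|=o(1/n)$, the right-hand side is bounded by $n\cdot o(1/n)\to0$ and the proof is complete.

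The main obstacle — and the genuine content of the theorem — is exactly this last uniform estimate: the pointwise bound $\|(F(\sigma)-U(\sigma))\chi\|=o(\sigma)$ must be made uniform over $\chi$ in the arc $\mathcal K:=\{U(s)\phi:0\le s\le1\}$. I would exploit that $\mathcal K$ is compact in $\mathcal H$ and, because $(A+B)U(s)\phi=U(s)(A+B)\phi$ depends continuously on $s$, that $(A+B)\mathcal K$ is compact as well; revisiting the decomposition of the second paragraph, the $o(\sigma)$ remainders are, up to norm-one factors, of the form $(e^{i\sigma A}-I)$ or $(e^{i\sigma(A+B)}-I)$ applied to vectors ranging over a fixed compact subset of $\mathcal H$ assembled from $\mathcal K$ and $(A+B)\mathcal K$ — and strong operator convergence to the identity is automatically uniform on compact sets. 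This supplies the required uniform $o(1/n)$ and closes the argument. (Packaged abstractly, the first three steps are precisely a proof of Chernoff's product formula applied to $W(s)=e^{isA}e^{isB}$, whose strong derivative at $0$ is $i(A+B)$ on the core $D$ of its generator; this is the route taken in \cite{rs1}.)
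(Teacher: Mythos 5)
The paper itself offers no proof of this statement: it is quoted verbatim from Reed--Simon, Theorem VIII.30, which is exactly the route you signal you are following (``this is the route taken in \cite{rs1}''). Your overall architecture --- reduce to a dense core $D=D(A)\cap D(B)$, show that $s^{-1}(F(s)-I)\phi\to i(A+B)\phi$ for $\phi\in D$ via the three-term decomposition, telescope unitaries, and then upgrade the pointwise $o(\sigma)$ estimate to a uniform one over the compact arc $\mathcal K=\{U(s)\phi:0\le s\le1\}$ --- is indeed the structure of the Reed--Simon proof, and every step through the telescoping identity is carried out correctly.

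The gap is in the final uniformity step, and it is precisely the step that carries the weight of the hypothesis ``$A+B$ self-adjoint on $D(A)\cap D(B)$''. You assert that the remainders live on ``a fixed compact subset of $\mathcal H$ assembled from $\mathcal K$ and $(A+B)\mathcal K$'', but inspecting your own decomposition shows that the remainder after subtracting $i\sigma(A+B)\chi$ is controlled (via the fundamental theorem of calculus) by $\sup_{t\le\sigma}\|(e^{itA}-I)A\chi\|$, $\sup_{t\le\sigma}\|(e^{itB}-I)B\chi\|$, and $\sup_{t\le\sigma}\|(U(t)-I)(A+B)\chi\|$ as $\chi$ ranges over $\mathcal K$. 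Thus you need $A\mathcal K$ and $B\mathcal K$ to be relatively compact, not merely $\mathcal K$ and $(A+B)\mathcal K$, and these sets are \emph{not} ``assembled from'' the latter two by any algebraic operation. The missing ingredient is the observation --- itself a consequence of the closed graph theorem and the hypothesis $D(A+B)=D(A)\cap D(B)$ --- that the restrictions $A|_{D(A+B)}$ and $B|_{D(A+B)}$ are bounded with respect to the graph norm of $A+B$, i.e.\ $\|A\psi\|\le C(\|\psi\|+\|(A+B)\psi\|)$ for $\psi\in D(A+B)$. Only with this relative bound in hand can you infer that $s\mapsto AU(s)\phi$ is continuous (from the continuity of $s\mapsto U(s)\phi$ and $s\mapsto U(s)(A+B)\phi$), hence that $A\mathcal K$ is compact; without it there is no reason for $AU(s)\phi$ to vary continuously, or even to stay bounded, as $s$ runs over $[0,1]$. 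Adding that lemma closes the argument and recovers the Reed--Simon proof in full.
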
 \label{Prop:ESA+WP}

\section{Small-time approximately controllable examples} \label{Sec:Ex2}

In this section, we prove Theorem \ref{thm:small-time-global_BIS}. 
We work with the state space $\mathcal{H}=L^2(\R^d,\C)$ and the set of admissible controls made of piecewise constant functions $u=(u_1,u_2):(0,T)\rightarrow \R^2$ (see Section \ref{Subsec:WP_pwcst}).

In Section \ref{Subsec:LTAC}, we present the proof strategy: thanks to a well-known result of large-time approximate controllability for system (\ref{eq:small-time-oscillator_BIS}) with $V=0$, it suffices to prove that, for every $\sigma \geq 0$, the operator $e^{i\sigma(\Delta-|x|^2)}$ 
is $L^2$-STAR.

In Section \ref{Subsec:Toy}, we first study the particular case $V=0$, for which
the operators $e^{i\sigma(\Delta-|x|^2)}, \sigma\geq 0,$ are small-time \textbf{exactly} reachable. The proof is made particularly simple by an explicit representation formula of the solutions, inspired by \cite[Section 4]{carles}.

In Section \ref{subsec:General}, we further prove in the general case (i.e. for any $V$ satisfying \eqref{Hyp:V_transp} and (\ref{Hyp:V})) that the operators $e^{i\sigma(\Delta-|x|^2)}, \sigma\geq0,$ are $L^2$-STAR. Here, the proof is more sophisticated: it relies on some small-time limits of conjugated dynamics (given in the proof of Proposition \ref{Prop:exp(isDelta)}).

\subsection{Proof strategy relying on large time approximate control} \label{Subsec:LTAC}
\begin{proof}[Proof of Theorem \ref{thm:small-time-global_BIS}]
The large-time $L^2$-approximate controllability of system (\ref{eq:small-time-oscillator_BIS}) with $V=0$ is known to hold for a dense subset of potentials $W_2 \in L^\infty(\R^d,\R)$, see \cite[Theorem 2.6]{BCCS} and \cite[Proposition 4.6]{MS-generic}. 

\begin{proposition} \label{Prop:Boscain&al}
The system (\ref{eq:small-time-oscillator_BIS}) with $V=0$ is large time $L^2$-approximately controllable, generically with respect to $W_2 \in L^{\infty}(\R^d,\R)$. More precisely, there exists a dense subset $\mathcal{D}$ of $L^{\infty}(\R^d,\R)$ such that, for every $W_2 \in \mathcal{D}$ and $\psi_0 \in \mathcal{S}$, the set
$$
\{e^{i \sigma_k( \Delta- |x|^2+\alpha_k W_2) } \dots  
e^{i \sigma_1(\Delta-|x|^2+\alpha_1 W_2 )}\psi_0\, ;\, k \in \N^*, \sigma_1,\dots,\sigma_k\geq 0, \alpha_1,\dots,\alpha_k \in\R \}$$
is dense in $\mathcal{S}$.
\end{proposition}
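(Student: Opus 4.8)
Proposition~\ref{Prop:Boscain&al} is quoted from \cite[Theorem 2.6]{BCCS} and \cite[Proposition 4.6]{MS-generic}; here I sketch the strategy behind it. The plan is to freeze the frequency at $u_1\equiv 1$, so that one is left with the single-input bilinear system
$$i\partial_t\psi=\bigl(-\Delta+|x|^2+u_2(t)W_2(x)\bigr)\psi,$$
and to fit it into the abstract theory of approximate controllability of Schr\"odinger equations whose drift has pure point spectrum. The drift $-\Delta+|x|^2$ is self-adjoint on $\R^d$ with compact resolvent; denote by $(\lambda_k)_{k\geq1}$ its eigenvalues (the numbers $2|n|+d$, $n\in\N^d$, counted with multiplicity) and by $(\phi_k)_{k\geq1}$ an associated orthonormal basis of Hermite functions. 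Since $W_2\in L^\infty(\R^d,\R)$, multiplication by $W_2$ is a bounded symmetric perturbation, so the Cauchy problem is well posed in $L^2$ (and in the form domain), and the finite-dimensional Galerkin truncations converge to the full dynamics.

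First I would invoke the sufficient condition for approximate controllability in this framework: it is enough to exhibit a \emph{connectedness chain} $S\subset\N^*\times\N^*$, that is, a set of pairs whose associated graph on $\N^*$ is connected, such that $\langle\phi_j,W_2\phi_k\rangle\neq0$ for every $(j,k)\in S$, together with a non-resonance requirement relating the spectral gaps $|\lambda_j-\lambda_k|$, $(j,k)\in S$, to the other gaps that $W_2$ couples. Under such a condition, rotating-wave and Lie-bracket arguments performed on the Galerkin truncations, combined with a density and continuity argument, show that the orbit written in the statement is dense in $\mathcal S$ for every $\psi_0\in\mathcal S$; this is the abstract controllability mechanism of \cite{BCMS,BCCS}.

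The core of the proof is then to establish that such a chain exists for a dense set of $W_2$, which is where \cite{MS-generic} enters. The spectrum of $-\Delta+|x|^2$ is fixed and, being an arithmetic progression with common difference $2$, highly resonant; hence the genericity must be extracted entirely from the matrix elements $\langle\phi_j,W_2\phi_k\rangle$, which depend linearly --- thus real-analytically --- on $W_2$. For each Galerkin level $N$ one checks that the set of $W_2$ for which the finitely many conditions needed up to level $N$ hold is open and dense in $L^\infty(\R^d,\R)$: vanishing of a prescribed matrix element is a proper closed condition since $\phi_j\phi_k\not\equiv0$, and the relevant resonance identities are non-trivial analytic constraints, so the set of $W_2$ violating the conditions up to level $N$ lies in a countable union of proper analytic subsets. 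A Baire category argument over $N\in\N^*$ then produces a residual --- in particular dense --- subset $\mathcal D\subset L^\infty(\R^d,\R)$ on which a full connectedness chain obeying the spectral condition exists, which is exactly what is required.

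I expect the main obstacle to be the pervasive resonances of the harmonic-oscillator spectrum: equal gaps occur in abundance, so one cannot rely on the naive requirement that all gaps activated by $W_2$ be pairwise distinct, and must instead use the weak spectral condition of \cite{BCCS}, choosing the chain $S$ so that the resonant partners of its transitions are either not coupled by $W_2$ or lie in an already controllable sub-block. Showing that such a chain can be selected for generic $W_2$ while keeping all perturbations inside $L^\infty$ is the technical heart of the argument in \cite{MS-generic}.
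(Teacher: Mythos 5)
Your reduction of the statement to the large-time approximate controllability of a single-input bilinear Schr\"odinger equation with discrete-spectrum drift, verified via a non-resonant connectedness chain plus a Baire argument, is the right scaffolding; the paper itself only cites \cite{BCCS} and \cite{MS-generic} at this point. But your account of where the genericity comes from is both internally inconsistent and not the mechanism those references (or the paper's own proof of the $D(A^s)$-analogue, Proposition~\ref{Prop:Boscain&al_smooth}) actually use. You write that ``the spectrum of $-\Delta+|x|^2$ is fixed, \dots hence the genericity must be extracted entirely from the matrix elements $\langle\phi_j,W_2\phi_k\rangle$,'' yet you then invoke ``resonance identities'' as ``non-trivial analytic constraints'' on $W_2$ --- if the drift spectrum were indeed fixed, those relations would not depend on $W_2$ at all, and for the harmonic oscillator they are grossly violated: every gap is an even integer, and for $d\geq 2$ the eigenvalues are not even simple. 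The weak spectral condition of \cite{BCCS} cannot rescue a drift with this much degeneracy and these all-coincident gaps for generic, fully coupling $W_2$.

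What the cited argument actually does, and what the paper spells out in the proof of Proposition~\ref{Prop:Boscain&al_smooth}, is to perturb the \emph{effective drift} by $W_2$ itself. The propagators $e^{i\sigma(\Delta-|x|^2+\alpha W_2)}$ in the statement include, for each fixed $\alpha$, the whole semigroup in $\sigma$, so the connectedness-chain analysis is carried out for the operator $-\Delta+|x|^2+W$ rather than for $-\Delta+|x|^2$. This is why the non-resonance sets in the paper are defined by
\begin{equation*}
\widetilde{\mathcal{O}_q}=\Big\{W\in L^{\infty}(\R^d,\R)\ ;\ \sum_{j=1}^k q_j\,\lambda_j\big(-\Delta+|x|^2+W\big)\neq 0\Big\},
\end{equation*}
with eigenvalues (and in $\widetilde{\mathcal{Q}_n}$ eigenfunctions) of the $W$-perturbed operator; genericity of $W_2$ is used simultaneously to render the spectrum simple, to break the arithmetic-progression resonances, and to produce non-zero couplings, and Baire intersects the resulting open dense conditions. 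Your framing --- fixed resonant drift, genericity only through matrix elements, leftover resonances absorbed by the weak spectral condition --- fails already at the level of spectral simplicity for $d\geq 2$, and in $d=1$ no chain escapes the identity $\lambda_{j+1}-\lambda_j\equiv 2$. The specific non-generic example $W_2(x)=e^{ax^2+bx}$ of Remark~\ref{Rk:ex_W2_ab} is handled in \cite{BCMS} by exploiting its special algebraic structure, not by the generic mechanism you describe.
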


Thus, by taking into account the semigroup structure of the set of $L^2$-STAR operators (Lemma \ref{lem:reachable-operators}),
in order to prove Theorem \ref{thm:small-time-global_BIS},
it suffices to prove the following result.

\begin{proposition} \label{Prop:Approx_BIS}
We assume $V$ satisfies \eqref{Hyp:V_transp} and (\ref{Hyp:V}), and $W_2 \in L^{\infty}(\R^d,\R)$.
System (\ref{eq:small-time-oscillator_BIS}) satisfies the following property: for every
$\sigma \geq 0$ and
$\alpha \in \R$, the operator
$e^{i \sigma(\Delta-|x|^2+\alpha W_2)}$
is $L^2$-STAR.
\end{proposition}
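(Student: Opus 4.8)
The plan is to realize the target $e^{i\sigma(\Delta-|x|^2+\alpha W_2)}$ as a strong limit of long compositions of operators that are already $L^2$-STAR, using systematically the semigroup and strong‑closure structure from Lemma \ref{lem:reachable-operators}. First I would peel off the bounded part $\alpha W_2$ with the Trotter--Kato product formula (Proposition \ref{prop:trotter-kato}): since $\Delta-|x|^2+\alpha W_2$ is self-adjoint on $D(\Delta)\cap D(|x|^2)$ (the harmonic-oscillator domain, perturbed by the bounded operator $\alpha W_2$ via Kato--Rellich, Proposition \ref{prop:kato-rellich}), one has $e^{i\sigma(\Delta-|x|^2+\alpha W_2)}=\lim_{n\to\infty}\big(e^{i\frac{\sigma}{n}(\Delta-|x|^2)}\,e^{i\frac{\sigma\alpha}{n}W_2}\big)^n$ strongly. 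Hence, by Lemma \ref{lem:reachable-operators}, it suffices to prove: (a) $e^{i\sigma(\Delta-|x|^2)}$ is $L^2$-STAR for every $\sigma\geq0$; and (b) $e^{i\beta W_2}$ is $L^2$-STAR for every $\beta\in\R$.

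Claim (b) --- indeed, more generally, $e^{i(c_1|x|^2+c_2W_2)}$ is $L^2$-STAR for every $c_1,c_2\in\R$ --- I would obtain by a short-time, high-amplitude limit. For $\tau>0$ the constant control $u=(-c_1/\tau,-c_2/\tau)$ on $[0,\tau]$ is admissible, since $-\Delta+V-(c_1/\tau)|x|^2-(c_2/\tau)W_2$ differs from $-\Delta+V$ by a locally $L^2$ potential still bounded below by a quadratic, hence is essentially self-adjoint on $C_c^\infty(\R^d)$ (Proposition \ref{prop:self-adjointness}); it steers the system exactly by $U_\tau:=e^{i\tau(\Delta-V)+i(c_1|x|^2+c_2W_2)}$ in time $\tau$. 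On the common core $C_c^\infty(\R^d)$ one has $\tau(\Delta-V)\psi\to0$ in $L^2$ as $\tau\to0^+$ (here $V\psi\in L^2$ because $V\in L^2_{\rm loc}$ and $\psi$ is smooth with compact support), so the generators converge on that core and Proposition \ref{prop:trotter} gives $U_\tau\to e^{i(c_1|x|^2+c_2W_2)}$ strongly; since $U_\tau$ is exactly reachable in the vanishing time $\tau$, Lemma \ref{lem:reachable-operators} yields that $e^{i(c_1|x|^2+c_2W_2)}$ is reachable in time $0^+$, i.e. $L^2$-STAR. Taking $c_1=0$ proves (b).

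For claim (a) I would use space-dilations as an intermediate tool. The key lemma (Step A1) is that $D_\alpha$ is $L^2$-STAR for every $\alpha>0$; granting it for the moment, (a) follows by a dilation-conjugation of an exactly-reachable propagator. With constant control $(t^{-2},0)$ over time $\sigma t$ the system realizes $e^{i\sigma t(\Delta-V-t^{-2}|x|^2)}$ exactly (admissibility by Proposition \ref{prop:self-adjointness}, as $V+t^{-2}|x|^2\geq V\geq-a|x|^2-b$), and the conjugation formula (Proposition \ref{lem:conjugation}) together with $D_{\sqrt t}\Delta D_{1/\sqrt t}=t^{-1}\Delta$, $D_{\sqrt t}(|x|^2)D_{1/\sqrt t}=t|x|^2$, $D_{\sqrt t}V D_{1/\sqrt t}=V(\sqrt t\,\cdot)$ give
$$D_{\sqrt t}\,e^{i\sigma t(\Delta-V-t^{-2}|x|^2)}\,D_{1/\sqrt t}=e^{i\sigma(\Delta-|x|^2-tV(\sqrt t\,x))}.$$
On $C_c^\infty(\R^d)$ one has $tV(\sqrt t\,x)\psi\to0$ in $L^2$ as $t\to0^+$ --- because $|tV(\sqrt t\,x)|\leq c\,t^{1+\gamma/2}|x|^\gamma$ on $\mathrm{supp}\,\psi$ with $t^{1+\gamma/2}\to0$ (by $\gamma>-2$) and $\||x|^\gamma\psi\|_{L^2}<\infty$ (by $\gamma>-d/2$) --- so Proposition \ref{prop:trotter} yields strong convergence of the right-hand side to $e^{i\sigma(\Delta-|x|^2)}$. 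As this is a composition of $L^2$-STAR operators (Step A1 and Lemma \ref{lem:reachable-operators}) with total control time $\sigma t\to0$, Lemma \ref{lem:reachable-operators} gives that $e^{i\sigma(\Delta-|x|^2)}$ is $L^2$-STAR, which is (a).

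It remains to prove Step A1 --- that arbitrarily large space-dilations $D_\alpha$ can be realized approximately in arbitrarily short time by driving the quadratic frequency --- and this is the main obstacle and the genuinely new contribution. The mechanism is that, when $u_1(t)$ is driven to be very large (with $u_2=0$), the evolution of \eqref{eq:small-time-oscillator_BIS} is, at leading order, transport of the wavefunction along the flow generated by $\tfrac12[\Delta,u_1(t)|x|^2]=2u_1(t)(\langle x,\nabla\rangle+d/2)$, i.e. along dilations, the drift $-\Delta+V$ being a subleading correction because $V$ is subquadratic. When $V=0$ this is transparent from the explicit lens-transform solution formula for the quadratic Schrödinger equation (cf. \cite[Section 4]{carles}), which reduces \eqref{eq:small-time-oscillator_BIS} to a time-reparametrised free equation and lets one dial in $D_\alpha$ on an arbitrarily short interval (up to residual quadratic phases $e^{ic|x|^2}$, themselves $L^2$-STAR by the generalization of (b)). For a general, possibly singular, $V$ one constructs instead an approximate (WKB-type) solution following the dilation transport and estimates the residual $(-\Delta+V)\psi$ against the strong quadratic control; this is precisely where the structural hypotheses \eqref{Hyp:V_transp} and \eqref{Hyp:V} (notably $\gamma>\max\{-2,-d/2\}$) enter, and carrying out this error estimate rigorously and uniformly on $L^2$ is the technical heart of the argument.
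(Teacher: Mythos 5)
Your plan reproduces the paper's overall architecture: peel off the bounded potential $\alpha W_2$ via Trotter--Kato, realize pure multiplicative exponentials $e^{i(c_1|x|^2+c_2W_2)}$ as short-time high-amplitude limits via Proposition~\ref{prop:trotter}, and obtain the quadratic drift $e^{i\sigma(\Delta-|x|^2)}$ by conjugating a short-time controlled propagator with dilations. Your shortcut at step (a) --- conjugating $e^{i\sigma t(\Delta-V-t^{-2}|x|^2)}$ by $D_{\sqrt t},D_{1/\sqrt t}$ to land directly on $e^{i\sigma(\Delta-|x|^2-tV(\sqrt t\,\cdot))}$ --- is correct and actually fuses the paper's Items 3 and 4 of Proposition~\ref{Prop:exp(isDelta)} into one step, which is a modest simplification (the paper first builds $e^{i\sigma\Delta}$, then does one more Trotter--Kato with $e^{-i\sigma|x|^2}$).

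The genuine gap is Step A1, that $D_\alpha$ is $L^2$-STAR --- you correctly identify it as the heart of the matter, but you only sketch a route (lens transform for $V=0$, then a WKB-type construction with residual estimates for general $V$) and say the error analysis is "the technical heart of the argument". That route is considerably heavier than what is needed, and you do not show it closes. The paper's Item 2 of Proposition~\ref{Prop:exp(isDelta)} resolves Step A1 with a simple quadratic-phase conjugation, no WKB at all: set
$$L_\tau:=e^{i\frac{\log\alpha\,|x|^2}{4\tau}}\,e^{i\tau\bigl(\Delta-V+\frac{\log^2\alpha\,|x|^2}{4\tau^2}\bigr)}\,e^{-i\frac{\log\alpha\,|x|^2}{4\tau}},$$
which is $L^2$-approximately reachable in time $\tau^+$ because the outer phases are $L^2$-STAR (your claim (b)) and the middle exponential is exactly reachable in time $\tau$ with constant $u_1$. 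By the conjugation formula (Proposition~\ref{lem:conjugation}) and the explicit computation
$e^{iB}\Delta e^{-iB}=\Delta-i\tfrac{\log\alpha}{2\tau}(d+2\langle x,\nabla\rangle)-\tfrac{\log^2\alpha}{4\tau^2}|x|^2$ with $B=\tfrac{\log\alpha}{4\tau}|x|^2$, the $|x|^2/\tau^2$ terms cancel and one gets $L_\tau=\exp\bigl(i\tau(\Delta-V)+\tfrac{\log\alpha}{2}(d+2\langle x,\nabla\rangle)\bigr)$, which converges strongly to $D_\alpha$ as $\tau\to0$ by Proposition~\ref{prop:trotter}. This works for every $V$ satisfying \eqref{Hyp:V_transp} alone; the singularity assumption \eqref{Hyp:V} is only used later (in your step (a) / the paper's Item 3) to control $tV(\sqrt t\,\cdot)\psi\to0$ --- your attribution of where \eqref{Hyp:V} enters is therefore slightly misplaced. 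Since Step A1 is left unproved and the route you suggest for it is both harder and unnecessary, the proposal does not constitute a complete proof; the quadratic-phase sandwich is the missing idea.
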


To obtain Proposition \ref{Prop:Approx_BIS}, all we have to do is demonstrate it for $\alpha=0$.

\begin{proposition} \label{Prop:Approx_BIS_alpha=0}
We assume $V$ satisfies \eqref{Hyp:V_transp} and (\ref{Hyp:V}), and $W_2 \in L^{\infty}(\R^d,\R)$.
System (\ref{eq:small-time-oscillator_BIS}) satisfies the following property: for every
$\sigma \geq 0$
the operator $e^{i \sigma(\Delta-|x|^2)}$
is $L^2$-STAR.
\end{proposition}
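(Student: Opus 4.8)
The plan is to realize $e^{i\sigma(\Delta-|x|^2)}$ as a strong limit of compositions of elementary controlled propagators, invoking the semigroup-closure structure of $L^2$-STAR operators (Lemma \ref{lem:reachable-operators}(3)). The two basic building blocks available for free are: (i) the propagators $e^{i\tau(\Delta-V-u_1|x|^2-u_2W_2)}$ for constant controls, which are $L^2$-exactly reachable in time $\tau$; and (ii) as the control $u_1 \to +\infty$ over a vanishing time window, an \emph{arbitrarily fast transport along space-dilations} — this is the heart of Section \ref{sec:techniques}. Concretely, I would first establish rigorously (presumably this is the content of an intermediate proposition, the analogue of Proposition \ref{Prop:exp(isDelta)}) that for every $\alpha>0$ the dilation operator $D_\alpha \psi = \alpha^{d/2}\psi(\alpha x)$ is $L^2$-STAR: one takes a large constant frequency $u_1 = \lambda$ on a short time interval $[0,\eta]$ with $\lambda\eta$ tuned so that, after also inserting short free evolutions, a Trotter-type argument (Proposition \ref{prop:trotter-kato}) splits $e^{i\eta(\Delta - V - \lambda|x|^2)}$ into a product whose $\lambda\to\infty$, $\eta\to 0$ limit is the pure transport flow $\exp(t u(t)(2\langle x,\nabla\rangle + d))$ generating $D_\alpha$. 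The potentials $-\Delta$ and $V$ become negligible in this regime because they are dominated by $\lambda|x|^2$ on the relevant time scale; hypothesis \eqref{Hyp:V} (the bound $|V(x)|\le c|x|^\gamma$ with $\gamma > \max\{-2,-d/2\}$ near the origin) and \eqref{Hyp:V_transp} (the lower bound ensuring self-adjointness via Proposition \ref{prop:self-adjointness}) are exactly what is needed to control the error terms uniformly in $L^2$.

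Once space-dilations are available as $L^2$-STAR operators, the second step is the time-contraction identity announced in Section \ref{sec:techniques}:
$$ e^{is\Delta}\psi = \lim_{t\to 0^+} D_{t^{1/2}}\, e^{ist(\Delta - V)}\, D_{t^{-1/2}}\psi, $$
which follows from the exact scaling $D_{t^{-1/2}}\,\Delta\,D_{t^{1/2}} = t^{-1}\Delta$ together with the fact that $D_{t^{-1/2}} V D_{t^{1/2}} = V(t^{-1/2}x)$ which, after multiplication by $t$, converges to $0$ (again using \eqref{Hyp:V}, and Proposition \ref{prop:trotter} to pass the convergence of generators to convergence of propagators). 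The right-hand side is a composition of three $L^2$-STAR operators — two dilations and one elementary propagator $e^{ist(\Delta-V)}$ of control-time $st$, which $\to 0$ as $t\to 0^+$ — hence $e^{is\Delta}$ is $L^2$-STAR for every $s\ge 0$ by Lemma \ref{lem:reachable-operators}. Similarly, using $D_{t^{-1/2}}|x|^2 D_{t^{1/2}} = t|x|^2$, one should be able to produce a conjugated version that recovers the harmonic term; the cleanest route is probably to get $e^{is\Delta}$ and $e^{-is|x|^2}$ (the latter directly, with constant control $u_1 = -s/\tau$ over time $\tau \to 0$, or rather noting $e^{i\tau(\Delta-V-\lambda|x|^2)}$ with $\lambda\tau$ fixed and $\tau\to 0$ giving $e^{-is|x|^2}$ in the limit, the $\Delta$ and $V$ dropping out) as separate $L^2$-STAR operators.

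The final step is to reassemble $e^{i\sigma(\Delta - |x|^2)}$ from the pieces $e^{is\Delta}$ and $e^{-is|x|^2}$ via the Trotter-Kato product formula (Proposition \ref{prop:trotter-kato}): writing $e^{i\sigma(\Delta-|x|^2)} = \lim_{n\to\infty}\bigl(e^{i\frac{\sigma}{n}\Delta}e^{-i\frac{\sigma}{n}|x|^2}\bigr)^n$ as a strong limit, and noting each factor is $L^2$-STAR and finite products of $L^2$-STAR operators are $L^2$-STAR and the set is strongly closed, we conclude. One must check $\Delta - |x|^2$ (harmonic oscillator) is self-adjoint on $D(\Delta)\cap D(|x|^2)$, which is classical, so Proposition \ref{prop:trotter-kato} applies. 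I expect the main obstacle to be the first step: making the passage to the fast-transport limit fully rigorous in $L^2$ — i.e., showing that the controlled propagator with large quadratic frequency genuinely converges strongly to the dilation flow, with the kinetic and potential parts provably negligible — since this requires careful estimates on the relevant (short) time scale and is where the hypotheses on $V$ are consumed. The explicit representation formula à la \cite{carles} is presumably invoked precisely to sidestep these estimates in the clean case $V=0$, but the general case needs the Trotter/conjugation machinery of Propositions \ref{lem:conjugation}, \ref{prop:trotter}, \ref{prop:trotter-kato}.
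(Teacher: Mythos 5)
Your overall architecture matches the paper's: Proposition \ref{Prop:Approx_BIS_alpha=0} is proved via Proposition \ref{Prop:exp(isDelta)}, whose four items are exactly the building blocks you list — phase multiplications $e^{i\delta|x|^2}$, dilations $D_\alpha$, free evolutions $e^{i\sigma\Delta}$, and the Trotter--Kato reassembly. Your descriptions of the time-contraction identity $e^{is\Delta}=\lim_{t\to 0^+}D_{t^{1/2}}e^{ist(\Delta-V)}D_{t^{-1/2}}$, of obtaining $e^{-is|x|^2}$ as the $\tau\to0$ limit of $e^{i\tau(\Delta-V-\lambda|x|^2)}$ with $\lambda\tau$ fixed, of where \eqref{Hyp:V_transp}--\eqref{Hyp:V} are consumed (essential self-adjointness via Proposition \ref{prop:self-adjointness}; the estimate $t\|V(t^{1/2}\cdot)\psi\|_{L^2}\le c\,t^{1+\gamma/2}\bigl(\int_K|x|^{2\gamma}|\psi|^2\bigr)^{1/2}\to 0$ on $C^\infty_c$), and of the final Trotter--Kato step, all coincide with the paper.

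The genuine gap is your mechanism for proving $D_\alpha$ is $L^2$-STAR. You propose a ``Trotter-type argument'' (citing Proposition \ref{prop:trotter-kato}) applied to $e^{i\eta(\Delta-V-\lambda|x|^2)}$ with $\lambda\to\infty$, $\eta\to0$, $\lambda\eta$ tuned. But that single-propagator limit is $e^{-i(\lambda\eta)|x|^2}$, a phase, not a dilation; and the Trotter--Kato formula produces $e^{i(A+B)}$, whereas the dilation generator $d+2\langle x,\nabla\rangle$ is the \emph{commutator} $\tfrac12[\Delta,|x|^2]$. Extracting a commutator from products of exponentials would need a Lie-bracket product formula for unbounded operators, which is not among the paper's tools and is far more delicate. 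The paper's actual device is the conjugation formula (Proposition \ref{lem:conjugation}): with $c=\frac{\log\alpha}{4\tau}$ one forms the three-factor product $L_\tau=e^{ic|x|^2}\,e^{i\tau(\Delta-V+4c^2|x|^2)}\,e^{-ic|x|^2}$, whose outer factors are $L^2$-STAR by the phase-multiplication step (which therefore must be established \emph{before} the dilation step, not afterwards as your plan orders it) and whose middle factor is exactly reachable in time $\tau$ by a constant control. The exact algebraic identity
\[
e^{ic|x|^2}\,i\tau\bigl(\Delta-V+4c^2|x|^2\bigr)\,e^{-ic|x|^2}
= i\tau(\Delta-V)+\tfrac{\log\alpha}{2}\bigl(d+2\langle x,\nabla\rangle\bigr)
\]
(the extra quadratic term $4c^2|x|^2$ inside the middle factor is chosen precisely to cancel the one produced by the conjugation) gives $L_\tau=\exp\bigl(i\tau(\Delta-V)+\tfrac{\log\alpha}{2}(d+2\langle x,\nabla\rangle)\bigr)$, and then Proposition \ref{prop:trotter} sends $\tau\to0$ to yield $D_\alpha$ strongly. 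No delicate short-time estimate is required at this step — the conjugation is exact, and the only limit is the trivial $\tau\|(\Delta-V)\psi\|_{L^2}\to0$ on $C^\infty_c$. You list Proposition \ref{lem:conjugation} among needed tools at the very end, but your stated mechanism for $D_\alpha$ does not use it and, as written, would not produce the transport flow.
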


\begin{proof}[Proof of Proposition \ref{Prop:Approx_BIS} thanks to Proposition \ref{Prop:Approx_BIS_alpha=0}:] 
Let $\sigma\geq 0$ and $\alpha\in\R$.

\medskip

\noindent \emph{Step 1: We prove that the operator $e^{i\frac{\sigma\alpha}{n} W_2}$ is $L^2$-STAR for any $n\in \N$.}
Given $\tau>0$, the operator $L_\tau=e^{i\tau(\Delta-V+\frac{\sigma\alpha}{\tau n}W_2)}$ is exactly reachable in time $\tau$ because associated with the constant control $(u_1,u_2)=(0,-\frac{\sigma\alpha}{\tau n})$. Moreover, by Proposition \ref{prop:trotter}, for every $\psi\in L^2(\R^d,\C)$, $\|L_\tau\psi-e^{i\frac{\sigma\alpha}{n} W_2}\psi\|_{L^2}\to 0$ as $\tau\to 0$. By Lemma \ref{lem:reachable-operators}, the operator $e^{i\frac{\sigma\alpha}{n} W_2}$ is $L^2$-STAR.

\medskip
\noindent \emph{Step 2: We use the Trotter-Kato formula.} The operator $A := \sigma( \Delta  - |x|^2)$ is self-adjoint on
$D(A):=\{ \psi \in H^2(\R^d,\C) ; |x|^2 \psi \in L^2(\R^d,\C) \}$. 
The operator $B:={\alpha \sigma} W_2$ is self-adjoint on $L^2(\R^d,\C)$ because $W_2 \in L^\infty(\R^d,\R)$. The operator $A+B=\sigma(\Delta-|x|^2+\alpha W_2)$ is self-adjoint on $D(A)\cap D(B)=D(A)$ thanks to Proposition \ref{prop:kato-rellich}. We consider the operator 
$$L_n=\left(e^{i\frac{\sigma}{n}(\Delta-|x|^2)}e^{i\frac{\alpha\sigma}{n}W_2}\right)^n, $$
which is $L^2$-STAR thanks to the hypothesis, Step 1, and Lemma \ref{lem:reachable-operators}. By Proposition \ref{prop:trotter-kato}, for every $\psi \in L^2(\R^d,\C)$,
$\| (L_n-e^{i \sigma(\Delta-|x|^2+\alpha W_2)}) \psi \|_{L^2} \to 0$ as $n \to \infty$.
Thus, by Lemma \ref{lem:reachable-operators}, the operator
$e^{i\sigma (\Delta-|x|^2+ \alpha W_2)}$ is $L^2$-STAR.
\end{proof}
The proof of Theorem \ref{thm:small-time-global_BIS} then follows if we show Proposition \ref{Prop:Approx_BIS_alpha=0}.
\end{proof}

\subsection{Toy model $V=0$: an explicit representation formula} \label{Subsec:Toy}

In this section, we prove Proposition \ref{Prop:Approx_BIS_alpha=0} in the particular case $V=0$, i.e. for the system
\begin{equation}\label{eq:small-time-oscillator}
\begin{cases}
i\partial_t\psi(t,x)=\left(- \Delta
+u_{1}(t)|x|^2+u_2(t)W_2(x)\right)\psi(t,x), & (t,x) \in (0,T) \times \R^d,\\
\psi(0,\cdot)=\psi_0. &
\end{cases}
\end{equation}
In Section \ref{Subsec:WP}, we introduce a set $\mathcal{U}(0,T)$ of admissible controls for system (\ref{eq:small-time-oscillator}), which is slightly larger than the space of piecewise constant functions $u=(u_1,u_2):(0,T) \rightarrow \R^2$, because it will be more practical.
In Section \ref{Subsec:ExactST}, thanks to an explicit representation formula of the solutions, inspired by \cite[Section 4]{carles}, we prove that, for every $\sigma \geq 0$, the operator $e^{i\sigma(\Delta-|x|^2)}$ is small-time exactly reachable, which implies Proposition \ref{Prop:Approx_BIS_alpha=0} when $V=0$.

\subsubsection{Admissible controls} \label{Subsec:WP}

\begin{definition} \label{Def:Adm_cont}
Let $T>0$ and
$u_1, u_2:[0,T] \rightarrow \R$.
$(u_1,u_2) \in \mathcal{U}(0,T)$ if
$u_2$ is piecewise constant on $[0,T]$,
$u_1$ is piecewise constant on $\{t \in [0,T];u_2(t) \neq 0\}$, measurable and uniformly bounded on $\{ t \in [0,T]; u_2(t)=0\}$.
\end{definition}
\begin{proposition} \label{Prop:WP}
Let 
$W_2 \in L^{\infty}(\R^d,\R)$,
$T>0$, 
$(u_1, u_2) \in \mathcal{U}(0,T)$, 
and 
$\psi_0 \in L^2(\R^d,\C)$. 
The Cauchy problem (\ref{eq:small-time-oscillator}) has a unique solution 
$\psi \in C^0([0,T],L^2(\R^d,\C))$ and
$\| \psi(.)\|_{L^2}=\|\psi_0\|_{L^2}$.
\end{proposition}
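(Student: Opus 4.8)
\textbf{Proof strategy for Proposition \ref{Prop:WP}.}

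The plan is to reduce the problem to the well-posedness of time-independent unitary propagators by analyzing the structure of an admissible control $(u_1,u_2)\in\mathcal{U}(0,T)$. By Definition \ref{Def:Adm_cont}, the set $\{t\in[0,T];u_2(t)\neq 0\}$ is a finite union of intervals on which $u_2$ is constant, and on each such interval $u_1$ is piecewise constant; on the complementary (relatively) open set $\{t\in[0,T];u_2(t)=0\}$, which is again a finite union of intervals, we have $u_2\equiv 0$ and $u_1$ is merely measurable and bounded. So there is a finite subdivision $0=t_0<t_1<\dots<t_N=T$ such that on each $(t_{k-1},t_k)$ either (a) $u_2$ is a fixed constant $\beta_k\neq 0$ and $u_1$ is a fixed constant $\alpha_k$, or (b) $u_2\equiv 0$ and $u_1$ is measurable and bounded. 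By concatenation it suffices to construct the flow on a single such interval and then compose (and then check the uniqueness and continuity of the concatenated object).

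On an interval of type (a), the generator is the time-independent operator $-\Delta+\alpha_k|x|^2+\beta_k W_2(x)$. First I would note that, since $\alpha_k|x|^2\geq -a'|x|^2-b'$ trivially (indeed $\alpha_k|x|^2$ is already bounded below if $\alpha_k\geq 0$, and bounded below by nothing worse than $-|\alpha_k||x|^2$ in general, so hypothesis \eqref{Hyp:V_transp} is met with $V$ replaced by $\alpha_k|x|^2$), Proposition \ref{prop:self-adjointness} gives essential self-adjointness of $-\Delta+\alpha_k|x|^2$ on $C_c^\infty(\R^d,\C)$; then $\beta_k W_2$ is a bounded symmetric perturbation, so by Kato--Rellich (Proposition \ref{prop:kato-rellich}) the full generator is self-adjoint, and Stone's theorem provides a strongly continuous unitary group $e^{-is(-\Delta+\alpha_k|x|^2+\beta_k W_2)}$ on $L^2(\R^d,\C)$. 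On an interval of type (b) the generator is $-\Delta+u_1(t)|x|^2$ with $u_1$ time-dependent but only measurable and bounded; here I would invoke a standard non-autonomous well-posedness result for Schrödinger equations with a bounded-below, time-dependent (sub)quadratic potential — one can either cite the existence of a unitary propagator $U(t,s)$ generated by the family $-\Delta+u_1(t)|x|^2$ (e.g.\ via the theory of \cite{rs1} combined with the common form-domain/Kato stability of these operators, or via the explicit oscillator computations used later in Section \ref{Subsec:ExactST}), or, more in the spirit of this paper, approximate $u_1$ in $L^\infty$-weak-$*$ by piecewise constant controls and pass to the limit using Proposition \ref{prop:trotter}-type arguments on the associated propagators. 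Either way one obtains a unitary $U(t,s):L^2\to L^2$ that is strongly continuous in $(t,s)$ and maps into $C^0$ solutions. The $L^2$-norm conservation $\|\psi(t)\|_{L^2}=\|\psi_0\|_{L^2}$ is then immediate on each subinterval from unitarity of the propagators, and hence on all of $[0,T]$ by composition; continuity $\psi\in C^0([0,T],L^2)$ follows because each propagator is strongly continuous and the pieces are glued at finitely many times.

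The main obstacle I anticipate is the type-(b) case: handling a merely measurable, uniformly bounded (not piecewise constant) coefficient $u_1(t)$ in front of the unbounded operator $|x|^2$ requires a genuine non-autonomous existence theorem rather than a bare composition of Stone-group propagators. The cleanest route is probably to note that all the operators $-\Delta+u_1(t)|x|^2$ (for $\|u_1\|_\infty$ bounded) share a common form domain $\Sigma:=\{\psi\in H^1(\R^d);|x|\psi\in L^2\}$ and are uniformly bounded below after adding a fixed constant, so one can apply a standard Kato-type theorem on the generation of a unitary propagator for a measurable family of self-adjoint operators with a common form domain and locally integrable relative variation; uniqueness in $C^0([0,T],L^2)$ then follows from the a priori $L^2$-bound. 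An alternative, entirely self-contained, is to postpone this to Section \ref{Subsec:ExactST} and only prove here the weaker statement that piecewise-constant-in-$u_1$ controls are admissible, upgrading to general bounded measurable $u_1$ once the explicit harmonic-oscillator representation formula (which diagonalizes the $|x|^2$ part) is in hand — that formula turns the type-(b) dynamics into an ODE in the scalar dilation/phase parameters with bounded measurable coefficients, for which Carathéodory existence-uniqueness is elementary.
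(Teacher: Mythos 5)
Your decomposition of $[0,T]$ into finitely many intervals of type (a) ($u_1,u_2$ both constant) and type (b) ($u_2\equiv 0$, $u_1$ merely bounded measurable) is exactly what the paper does, and your treatment of type (a) — essential self-adjointness of $-\Delta+\alpha_k|x|^2$ from Proposition~\ref{prop:self-adjointness}, Kato--Rellich for the bounded perturbation $\beta_k W_2$, then Stone's theorem — matches the paper's Step~1 (the paper applies Proposition~\ref{prop:self-adjointness} directly with $V=\alpha_k|x|^2+\beta_k W_2$, which is marginally shorter but equivalent). The divergence is entirely in type (b), where the paper cites Fujiwara~\cite{fujiwara}: his theorem produces a unitary propagator on $L^2$ for $i\partial_t\psi=(-\Delta+V(t,x))\psi$ whenever $V(t,\cdot)$ is smooth, locally bounded in time near $x=0$, and has all space-derivatives of order $\geq 2$ uniformly bounded. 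This hypothesis set is tailor-made for $V(t,x)=u_1(t)|x|^2$ with $u_1\in L^\infty$, sign-changes included.

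Your preferred ``cleanest route'' for type (b) has a genuine gap. You claim the operators $-\Delta+u_1(t)|x|^2$ are ``uniformly bounded below after adding a fixed constant'' and share the form domain $\Sigma=\{\psi\in H^1;\,|x|\psi\in L^2\}$. This fails when $u_1(t)<0$, which Definition~\ref{Def:Adm_cont} permits: the inverted oscillator $-\Delta-|x|^2$ has spectrum equal to all of $\R$, so it is not semibounded and its quadratic form on $\Sigma$ is not semibounded or closable there, so the Kato-type propagator theorems built on a common form domain with uniform lower bound do not apply. Your third suggestion, by contrast, is sound and close in spirit to the paper's Section~\ref{Subsec:ExactST}: reduce the type-(b) evolution to the representation formula $\psi(t)=e^{ia(t)|x|^2}D_{1/b(t)}e^{i\zeta(t)(\Delta-|x|^2)}\psi_0$ with $(a,b,\zeta)$ solving the Carathéodory ODE (\ref{eq:abxi}), and then conservation and strong continuity follow from unitarity of the three factors. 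To make that route self-contained you would still need to rule out finite-time blow-up of the Riccati component $a$ — the paper handles precisely this for a variant of the same ODE in Step~1 of the proof of Proposition~\ref{Prop:SL2} by subdividing $[0,T]$ so that each piece has small $\|u\|_{L^1}$ (cf.\ Lemma~\ref{Lem:EDO}) — whereas citing Fujiwara sidesteps the issue entirely, which is why the paper prefers it here.
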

\begin{proof}
\noindent \emph{Step 1: On a time interval $[t_1,t_2]$ on which $u_1$ and $u_2$ are constant.} Proposition \ref{prop:self-adjointness} proves that
$(e^{i \tau (\Delta - u_1 |x|^2 - u_2 W_2)})_{\tau \in \R}$ is a group of unitary operators on $L^2(\R^d,\C)$.

\medskip

\noindent \emph{Step 2: On a time interval $[t_1,t_2]$ on which $u_2=0$.} The time dependent potential $V(t,x)=u_1(t)|x|^2$ satisfies the assumptions (V.I) and (V.II) of \cite[p.1]{fujiwara}, i.e. 
for almost every $t \in [t_1,t_2]$, 
$V(t,.) \in C^\infty(\R^d;\R)$, 
$V \in L^\infty((t_1,t_2)\times B_{\R^d}(0,1))$ and
for every $\alpha \in \N^d$ such that $|\alpha| \geq 2$ then
$\partial_x^{\alpha} V \in L^{\infty}((t_1,t_2)\times\R^d)$.
Thus, by \cite[Theorem 3]{fujiwara}, the equation
$i \partial_t \psi = (-\Delta+V(t,x)) \psi$ has a well defined propagator $(U(t,s))_{t_1 \leq s \leq t \leq t_2}$ of bounded operators on $L^2(\R^d,\C)$. 

\medskip

\noindent The conclusion of Proposition \ref{Prop:WP} is obtained by concatenating these propagators.
\end{proof}

The space $\mathcal{U}$ is clearly stable by concatenation, 
thus it is a set of admissible controls for 
system (\ref{eq:small-time-oscillator})
and the state space $\mathcal{H}=L^2(\R^d,\C)$,
in the sense of Definition \ref{Def:adm_cont} .

\subsubsection{Small-time exact reachability of $e^{i \sigma(\Delta-|x|^2)}$} \label{Subsec:ExactST}

In this section, we focus on the subsystem
\begin{equation}\label{eq:trap}
\begin{cases}
i\partial_t\psi(t,x)=(- \Delta + u(t)|x|^2)\psi(t,x),& (t,x) \in (0,T)\times\R^d, \\
\psi(0,\cdot)=\psi_0.&
\end{cases}
\end{equation}
The state space is $\mathcal{H}=L^2(\R^d,\C)$ and the set of admissible controls is $L^\infty((0,T),\R)$.

\begin{definition} [Dilations]
For $\alpha \in \R^*$, the dilation $D_{\alpha}$
is the linear isometry of $L^2(\R^d,\C)$ defined by $(D_{\alpha}\psi)(x)=|\alpha|^{d/2}\psi(\alpha x)$.
\end{definition}

The case $V=0$ of Proposition \ref{Prop:Approx_BIS_alpha=0} is a consequence of the following result.

\begin{proposition} \label{Prop_exact}
System \eqref{eq:trap} satisfies the following property: 
for every $\sigma \geq 0$, the operator
$e^{i \sigma(\Delta-|x|^2)}$ is small-time $L^2$-exactly reachable.
\end{proposition}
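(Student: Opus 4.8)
The plan is to realize the evolution $e^{i\sigma(\Delta-|x|^2)}$ as a limit of controlled trajectories of \eqref{eq:trap} by exploiting the explicit structure of solutions with purely quadratic (time-dependent) Hamiltonians, following the ansatz of \cite[Section 4]{carles}. Concretely, I would look for solutions of $i\partial_t\psi=(-\Delta+u(t)|x|^2)\psi$ of the form
\begin{equation*}
\psi(t,x)=\frac{1}{h(t)^{d/2}}e^{i\frac{\dot h(t)}{4h(t)}|x|^2}\,\phi\!\left(g(t),\frac{x}{h(t)}\right),
\end{equation*}
i.e. a combination of a dilation $D_{1/h(t)}$, a quadratic (chirp) phase multiplication $e^{i\frac{\dot h}{4h}|x|^2}$, and a time-reparametrized free-type evolution $e^{ig(t)(\Delta-|x|^2)}$ (or $e^{ig(t)\Delta}$, depending on which target is cleaner). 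Plugging this ansatz into the PDE, the cross terms in $|x|^2$ and the transport terms must cancel; this forces $h$ to solve a classical Newton-type ODE $\ddot h + 4u(t) h = c/h^{3}$ (an Ermakov–Pinney equation when the RHS is present, or simply $\ddot h = -4u(t)h$ if one uses the $e^{ig\Delta}$ form), while the new time is $g(t)=\int_0^t h(s)^{-2}\,ds$. The key point: $g$ and $h$ can be steered essentially freely by choosing $u$.

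From there the argument is: fix $\sigma\ge0$ and $\varepsilon\in(0,1)$. I want a control $u\in L^\infty((0,T),\R)$ with $T\le\varepsilon$ such that, at time $T$, the "boundary conditions" for the ansatz match the identity on the dilation and chirp factors while the reparametrized clock has run for exactly $\sigma$: that is $h(0)=h(T)=1$, $\dot h(0)=\dot h(T)=0$ (so the leading/trailing dilations and quadratic phases are trivial), and $g(T)=\int_0^T h^{-2}=\sigma$. Such $h$ exists on an arbitrarily short interval $[0,T]$: one builds a smooth bump profile $h$ on $[0,T]$ with $h\equiv 1$ near the endpoints, $\dot h\equiv 0$ near the endpoints, and $h$ dipping low enough in the middle so that $\int_0^T h(s)^{-2}\,ds=\sigma$ — since $h$ small on a subinterval makes $h^{-2}$ large, any prescribed value of the integral is attainable regardless of how small $T$ is. Then set $u(t):=-\ddot h(t)/(4h(t))$ (resp. the Ermakov prescription), which is bounded since $h$ is smooth and bounded away from $0$, hence $u\in\mathcal U(0,T)$ by Proposition \ref{Prop:WP}. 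Uniqueness and unitarity of the solution give $\psi(T;u,\psi_0)=e^{i\sigma(\Delta-|x|^2)}\psi_0$ for all $\psi_0$, so $e^{i\sigma(\Delta-|x|^2)}$ is exactly reachable in time $T\le\varepsilon$; since $\varepsilon$ was arbitrary, it is small-time $L^2$-exactly reachable.

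The steps in order: (i) verify the representation formula by direct substitution, identifying the ODE system for $(h,g)$ and the relation $u=-\ddot h/(4h)$ (or Ermakov–Pinney); (ii) check that this genuinely defines, for each admissible $u$, the unique $C^0([0,T],L^2)$ solution — here I lean on Proposition \ref{Prop:WP}, and on the fact that $D_\alpha$, the quadratic phase, and $e^{ig\Delta}$ are unitary so the formula is an isometry and extends from nice $\psi_0$ (Schwartz) to all of $L^2$ by density; (iii) construct the profile $h$ with the prescribed boundary jet and prescribed value $\sigma$ of $\int h^{-2}$ on an interval of length $\le\varepsilon$; (iv) conclude. The main obstacle is step (i)–(ii): making the ansatz rigorous at the level of $L^2$ rather than just formal PDE bookkeeping — in particular ensuring that the quadratic phase multiplication and the dilation interact correctly with the domain of $-\Delta+|x|^2$, and that concatenating the three unitary building blocks really reproduces the propagator of the time-dependent problem. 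The profile-construction step (iii) is elementary calculus (choose $h$ to equal a small constant $\eta$ on a central subinterval and solve $\eta^{-2}\cdot(\text{length})\approx\sigma$, smoothing the corners), and the boundary-jet conditions are exactly what kills the would-be dilation/chirp factors at $t=T$, leaving the pure drift evolution $e^{i\sigma(\Delta-|x|^2)}$.
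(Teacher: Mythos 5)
Your proposal is correct and is essentially the paper's own argument, just parametrized differently: your $(h,\dot h,g)$ correspond to the paper's $(b, 4ab, \zeta)$ via $a=\dot h/(4h)$, your Ermakov--Pinney ODE $\ddot h+4uh=4/h^3$ is exactly the paper's system \eqref{eq:abxi}, your boundary conditions $h(0)=h(T)=1$, $\dot h(0)=\dot h(T)=0$, $\int_0^T h^{-2}=\sigma$ match $(a,b,\zeta)(T)=(0,1,\sigma)$, and the bump-profile construction for $h$ mirrors the paper's choice $b(t)=e^{4f(t/T)}$ with $f\in C^\infty_c((0,1))$ satisfying $\int_0^1 e^{-8f}=\sigma/T$. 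The paper's Appendix carries out precisely the direct-substitution verification you flag as the main rigor step, and it relies on the same Proposition \ref{Prop:WP} for admissibility of the resulting bounded control.
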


\begin{proof}[Proof of Proposition \ref{Prop_exact}]

\noindent \emph{Step 1: An explicit representation formula for (\ref{eq:trap}).}
Let $T>0$ and $u \in L^\infty(0,T)$.
We assume that the maximal solution $(a, b, \zeta)$ of the differential equation
\begin{equation} \label{eq:abxi}
\left\lbrace \begin{array}{l}
\dot{a}(t) = - 4 a(t)^2  + \frac{1}{b(t)^4} - u(t), \\
\dot{b}(t)= 4 a(t) b(t), \\
\dot{\zeta}(t)=\frac{1}{b(t)^2}, \\
(a,b,\zeta)(0)=(0,1,0),
\end{array}\right.
\end{equation}
is defined on $[0,T]$.
Then, basic computations prove that,
for every $t \in [0,T]$ (see Appendix \ref{Appendix1} for details)
\begin{equation} \label{def:b_zeta}
b(t)=e^{4 \int_0^t a(s) ds}>0, \qquad 
\zeta(t)=\int_0^t e^{- 8 \int_0^{s} a } ds,
\end{equation}
\begin{equation} \label{explicit}
\psi(t;u,.)=e^{ia(t) |x|^2} D_{\frac{1}{b(t)}} e^{i\zeta(t)(\Delta-|x|^2)}\psi_0.
\end{equation}


\medskip

\noindent \emph{Step 2: Given $\sigma,T >0$, we prove there exists $u \in L^{\infty}(0,T)$ such that the solutions of (\ref{eq:abxi}) are defined on $[0,T]$ and satisfy 
$(a,b,\zeta)(T)=(0,1,\sigma)$.}
Indeed, there exists $f \in C^\infty_c((0,1),\R)$ such that
$$\int_0^1 e^{-8 f(s)} ds = \frac{\sigma}{T}.$$
Then the functions $a,b,\zeta,u:[0,T] \rightarrow \R$, defined by
$$a(t):= \frac{1}{T} f' (t/T), \quad 
b(t):=e^{4 f(t/T)}, \quad 
\zeta(t):=\int_0^t e^{-8 f(s/T)} ds, \quad
u(t):=-\dot{a}(t) - 4 a(t)^2  + \frac{1}{b(t)^4} $$
give the conclusion.
\end{proof}

\subsection{General case: small-time approximate reachability of $e^{i \sigma(\Delta-|x|^2)}$} \label{subsec:General}

In this section, we focus on the subsystem
\begin{equation}\label{eq:trap_VV}
\begin{cases}
i\partial_t\psi(t,x)=(- \Delta + V(x) + u(t)|x|^2)\psi(t,x),& (t,x) \in (0,T)\times\R^d, \\
\psi(0,\cdot)=\psi_0.&
\end{cases}
\end{equation}
where $V$ satisfies \eqref{Hyp:V_transp} and (\ref{Hyp:V}), and $u:(0,T) \rightarrow \R$ is piecewise constant. Clearly, Proposition \ref{Prop:Approx_BIS_alpha=0} is a consequence of the following result.

\begin{proposition} \label{Prop:exp(isDelta)}
We assume $V$ satisfies \eqref{Hyp:V_transp} and (\ref{Hyp:V}).
For system (\ref{eq:trap_VV}), the following operators are $L^2$-STAR:
\begin{enumerate}
    \item $e^{i \delta |x|^2}$ for every $\delta\in\R$,
    \item $D_{\alpha}$ for every $\alpha > 0$,
    \item $e^{i \sigma \Delta}$ for every $\sigma \geq 0$,
    \item $e^{i \sigma (\Delta-|x|^2)}$ for every $\sigma \geq 0$.
    \end{enumerate}
\end{proposition}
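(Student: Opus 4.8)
The four operators will be obtained in the stated order, each building on the previous ones through the semigroup and strong-closure properties of $L^2$-STAR operators (Lemma \ref{lem:reachable-operators}). For item (1), I would argue as in the proof of Proposition \ref{Prop:Approx_BIS} Step 1: for $\tau>0$ the operator $L_\tau:=e^{i\tau(\Delta-V-\frac{\delta}{\tau}|x|^2)}$ is exactly reachable in time $\tau$ (constant control $u(t)=-\delta/\tau$), and by Proposition \ref{prop:trotter} applied with $A_\tau=\tau(\Delta-V)-\delta|x|^2$ and $A=-\delta|x|^2$ on the common core $C_c^\infty(\R^d,\C)$, we get $\|(L_\tau-e^{-i\delta|x|^2})\psi\|_{L^2}\to 0$ as $\tau\to 0^+$; hence $e^{-i\delta|x|^2}$, and likewise $e^{i\delta|x|^2}$, is $L^2$-STAR. (Here assumption \eqref{Hyp:V} guarantees $V$ is infinitesimally form-small, but in fact we only need essential self-adjointness of the drift from Proposition \ref{prop:self-adjointness}.)

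\textbf{Space-dilations.} Item (2) is the heart of the matter and the step I expect to be the main obstacle. The idea sketched in Section \ref{sec:techniques} is that tuning the quadratic potential lets the wavefunction evolve along the transport flow of $u(t)(2\langle x,\nabla\rangle+d)$, which integrates to a dilation. Concretely, I would mimic the toy-model computation of Section \ref{Subsec:ExactST}: for a piecewise constant control $u$ on a short interval $[0,t]$, conjugate the dynamics to extract a dilation factor. A clean route is to write $D_\alpha=e^{\log\alpha(\langle x,\nabla\rangle+d/2)}$ and observe that $\langle x,\nabla\rangle+d/2=\tfrac{i}{4}[\,\Delta-|x|^2,\,|x|^2\,]$ up to the right constant; then realize $D_\alpha$ as a small-time limit of a commutator-type product of the already-available propagators $e^{is(\Delta-V-\beta|x|^2)}$ and $e^{i\delta|x|^2}$. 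The subtlety introduced by a general $V$ (possibly singular, per \eqref{Hyp:V}) is that $V$ is \emph{not} dilation-invariant, so the conjugation $D_\alpha e^{is(\Delta-V)}D_\alpha^{-1}$ produces a rescaled potential $V(\cdot/\alpha)/\alpha^{?}$; one must check that in the relevant small-time limit the $V$-contribution vanishes. I would handle this by taking $s=st$ with $t\to 0^+$ and a dilation parameter tending to $1$ at a compatible rate, using Proposition \ref{prop:trotter} / Proposition \ref{prop:trotter-kato} and the hypothesis $\gamma>\max\{-2,-d/2\}$ exactly to control $V$ in the rescaled variables.

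\textbf{Time-contractions and the drift.} For item (3), with dilations in hand I would use the displayed identity of Section \ref{sec:techniques}, namely $e^{is\Delta}\psi=\lim_{t\to 0^+}D_{t^{1/2}}e^{ist(\Delta-V)}D_{t^{-1/2}}\psi$. Rigorously: $D_{t^{1/2}}e^{ist(\Delta-V)}D_{t^{-1/2}}=\exp\big(ist(D_{t^{1/2}}(\Delta-V)D_{t^{-1/2}})\big)$ by the conjugation formula Proposition \ref{lem:conjugation}, and $D_{t^{1/2}}\Delta D_{t^{-1/2}}=\tfrac{1}{t}\Delta$ while $D_{t^{1/2}}V D_{t^{-1/2}}=V(t^{-1/2}\cdot)$, so the exponent equals $is\big(\Delta-tV(t^{-1/2}\cdot)\big)$; as $t\to 0^+$ the potential term tends to $0$ strongly on $C_c^\infty$ (again invoking \eqref{Hyp:V}), and Proposition \ref{prop:trotter} gives strong convergence to $e^{is\Delta}$. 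Since for each fixed $t$ the operator $D_{t^{1/2}}e^{ist(\Delta-V)}D_{t^{-1/2}}$ is a composition of $L^2$-STAR operators (two dilations from item (2) and one controlled propagator $e^{ist(\Delta-V)}$, reachable in time $st$ which $\to 0$), Lemma \ref{lem:reachable-operators} yields $e^{is\Delta}$ is $L^2$-STAR for every $s\geq 0$. Finally item (4): write $e^{i\sigma(\Delta-|x|^2)}$ via the Trotter--Kato product formula (Proposition \ref{prop:trotter-kato}) as the strong limit of $\big(e^{i\frac{\sigma}{n}\Delta}e^{-i\frac{\sigma}{n}|x|^2}\big)^n$, each factor being $L^2$-STAR by items (3) and (1); the required self-adjointness of $\Delta-|x|^2$ on $D(\Delta)\cap D(|x|^2)$ is classical (harmonic oscillator), so Lemma \ref{lem:reachable-operators}(2)--(3) concludes. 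The one genuinely delicate point throughout is the treatment of the singular/unbounded $V$ in the rescaled limits of items (2) and (3), where the exponent $\gamma$ restriction in \eqref{Hyp:V} is used.
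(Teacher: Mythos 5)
Your overall architecture matches the paper's: the four items are established in sequence, each relying on the preceding ones via Lemma~\ref{lem:reachable-operators}, and items (1) and (4) coincide with the paper's argument. However, items (2) and (3) contain errors worth flagging.

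\textbf{Item (2).} The paper's construction does not meet the ``rescaled $V$'' difficulty you raise, and the $\gamma$-restriction in \eqref{Hyp:V} is in fact \emph{not} used here. One defines, for $\tau>0$,
\[
L_\tau := e^{i\frac{\log\alpha}{4\tau}|x|^2}\, e^{i\tau\left(\Delta - V + \frac{\log^2\alpha}{4\tau^2}|x|^2\right)}\, e^{-i\frac{\log\alpha}{4\tau}|x|^2},
\]
approximately reachable in time $\tau^+$ by item (1), and applies the conjugation formula (Proposition~\ref{lem:conjugation}) to get
\[
L_\tau = \exp\left(i\tau(\Delta-V) + \tfrac{\log\alpha}{2}\bigl(d+2\langle x,\nabla\rangle\bigr)\right).
\]
The conjugation is by quadratic \emph{phases}, never by dilations, so $V$ is not rescaled; its entire contribution is the term $\tau(\Delta-V)$, which tends to $0$ on $C_c^\infty$ for the trivial reason that the prefactor $\tau\to 0$. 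Moreover, your discussion of $D_\alpha e^{is(\Delta-V)}D_\alpha^{-1}$ inside item (2) is circular, since $D_\alpha$ is exactly the operator you are trying to reach at that stage; that conjugation belongs to item (3).

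\textbf{Item (3).} Your dilation computation has the wrong exponent: $D_{t^{1/2}}V D_{t^{-1/2}} = V(t^{1/2}\cdot)$, not $V(t^{-1/2}\cdot)$. This is not a harmless typo, because the whole mechanism depends on $t^{1/2}x$ being a \emph{small} argument. For $\psi$ supported in $B(0,R)$ and $t<(\delta/R)^2$, one has $t^{1/2}x\in B(0,\delta)$, so \eqref{Hyp:V} gives $|tV(t^{1/2}x)|\le c\, t^{1+\gamma/2}|x|^\gamma$; the convergence to $0$ uses $\gamma>-2$ (positivity of the power of $t$) and $\gamma>-d/2$ (local integrability of $|x|^{2\gamma}|\psi|^2$). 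Had the rescaled potential been $V(t^{-1/2}\cdot)$, one would be probing $V$ at large arguments, where only the lower bound \eqref{Hyp:V_transp} is available and the convergence would fail. So the hypothesis \eqref{Hyp:V} lives entirely in item (3), not in item (2) as you suggest.
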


\begin{proof}

The notations used to prove the Item $i$ are not valid in the proof of the Item $j \neq i$.

\medskip

\noindent \emph{1.} Let $\delta\in\R^*$. For $\tau>0$, the operator $\tau (\Delta-V)+ \delta |x|^2$ is essentially self-adjoint on $C^{\infty}_c(\R^d,\C)$ by Proposition \ref{prop:self-adjointness}, thus its closure $A_{\tau}$ is self-adjoint. 
The operator $A_{0}:=\delta |x|^2$ is self-adjoint on $D(A_0):=\{ \psi \in L^2(\R^d,\C) ; |x|^2 \psi \in L^2(\R^d,\C) \}$. $C^{\infty}_c$ is a common core to $A_0$ and $A_{\tau}$. For any $\psi \in C^{\infty}_c(\R^d,\C)$,
$\|(A_{\tau}-A_0)\psi\|_{L^2} = \tau \| (\Delta-V) \psi \|_{L^2} \to 0$ as $\tau \to 0$. Thus, by Proposition \ref{prop:trotter}, for every
$\psi \in L^2(\R^d,\C)$ 
$\|e^{i A_{\tau}} \psi - e^{i A_0} \psi \|_{L^2} \rightarrow 0$ as $\tau \to 0$. 
Moreover, for every $\tau>0$, the operator $e^{i A_{\tau}}$ is $L^2$-exactly reachable in time $\tau$ because associated with the constant control $u=-\delta/\tau$.
Thus, by Lemma \ref{lem:reachable-operators}, the operator 
$e^{i A_0}$ is $L^2$-STAR.

\medskip

\noindent \emph{2.} Let $\alpha>0$. For every $\tau>0$, the operator
$$L_{\tau} := e^{i\frac{\log(\alpha) |x|^2}{4\tau}}
e^{i\tau\left(\Delta-V+\frac{\log^2(\alpha) |x|^2}{4\tau^2}\right)}
e^{-i\frac{\log(\alpha) |x|^2}{4\tau}}$$
is $L^2$ approximately reachable in time $\tau^+$, 
by Lemma \ref{lem:reachable-operators} and Item 1.

\medskip

\noindent \emph{Step 1: We prove that, for every $\tau>0$,
$$L_{\tau} = \exp\left( i\tau(\Delta-V)+\frac{\log(\alpha)}{2}\left(d+2\la x,\nabla\ra\right)
 \right). $$}
The operator $\tau (\Delta-V+ \log^2(\alpha) |x|^2/ (4\tau^2))$ is essentially self-adjoint on $C^\infty_c(\R^d,\C)$ by Proposition \ref{prop:self-adjointness}, thus its closure $A_{\tau}$ is self-adjoint. The operator $B=\log(\alpha) |x|^2 / (4\tau)$ is self-adjoint on $D(B):=\{ \psi \in L^2(\R^d,\C) ; |x|^2 \psi \in L^2(\R^d,\C) \}$. $C^{\infty}_c(\R^d,\C)$ is a core of $A_{\tau}$ and $e^{iB}$ is an isomorphism of $C^{\infty}_c(\R^d,\C)$. Thus, by Proposition \ref{lem:conjugation},
$$L_{\tau}= \exp \left( i\tau
e^{i\frac{\log(\alpha) |x|^2}{4\tau}}
\left(\Delta-V+\frac{\log^2(\alpha) |x|^2}{4\tau^2}\right)
e^{-i\frac{\log(\alpha) |x|^2}{4\tau}}
\right).$$
Moreover, standard computations  prove
\begin{equation} \label{log(Ltau)}
i\tau
e^{i\frac{\log(\alpha) |x|^2}{4\tau}}
\left(\Delta-V+\frac{\log^2(\alpha) |x|^2}{4\tau^2}\right)
e^{-i\frac{\log(\alpha) |x|^2}{4\tau}}
=
i \tau(\Delta-V)+\frac{\log(\alpha)}{2}\left(d+2\la x,\nabla\ra\right).
\end{equation}

\medskip

\noindent \emph{Step 2: We prove that, for every $\psi \in L^2(\R^d,\C)$, $\|(L_{\tau}-D_{\alpha})\psi\|_{L^2} \to 0$ as 
$\tau \to 0$.}
By (\ref{log(Ltau)}) and Proposition \ref{prop:self-adjointness},
for $\tau>0$,
the operator $\tau(\Delta-V)-i (d+2\la x,\nabla\ra)\log(\alpha)/2$ is essentially self-adjoint on $C^\infty_c(\R^d,\C)$, thus its closure $B_{\tau}$ is self-adjoint. The operator 
$B_0:=-i (d+2\la x,\nabla\ra)\log(\alpha)/2$ is self-adjoint on $D(B_0):=\{ \psi \in L^2(\R^d,\C) ; \langle x , \nabla \psi \rangle \in L^2(\R^d,\C)  \}$. $C^{\infty}_c(\R^d,\C)$ is a common core of $B_{\tau}$ and $B_0$. For every $\psi \in C^\infty_c(\R^d,\C)$,
$\| (B_{\tau}-B_0) \psi \|_{L^2} = \tau \|(\Delta-V) \psi \|_{L^2} \to 0$ as $\tau \to 0$. 
Thus, by Proposition \ref{prop:trotter} and Step 1, for every $\psi \in L^2(\R^d,\C)$,
$$\| (L_{\tau}-D_{\alpha})\psi\|_{L^2} = \| (e^{iB_{\tau}} - e^{i B_0} ) \psi \|_{L^2}
\underset{\tau \to 0}{\longrightarrow} 0.$$
Finally, Lemma \ref{lem:reachable-operators} proves that $D_{\alpha}$ is $L^2$-STAR.

\medskip

\noindent \emph{3.} Let $\sigma >0$. For every $t>0$, the operator
$$L_{t} := D_{t^{1/2}}e^{i\sigma t(\Delta-V)}D_{t^{-1/2}} $$
is $L^2$-approximately reachable in time $(\sigma t)^+$, by Item \emph{2} and Lemma \ref{lem:reachable-operators}. 
Moreover, a rescaling argument proves that
\begin{equation}\label{eq:time-rescaling}
L_{t} =  
\exp \left( i\sigma \left(\Delta - tV(t^{1/2}\cdot) \right) \right) .
\end{equation}
Alternatively, one can also apply Proposition \ref{lem:conjugation}, which gives that 
$$L_t=\exp(D_{t^{1/2}}i\sigma t (\Delta-V)D_{t^{-1/2}}).$$ 
Then, for every $\psi\in C^\infty_c(\R^d,\C)$, explicit dilations and differentiations give the action of the generator: $$D_{t^{1/2}}i\sigma t (\Delta-V)D_{t^{-1/2}}\psi=i\sigma(\Delta-tV(t^{1/2}x))\psi,$$
which proves \eqref{eq:time-rescaling}.
For every $t>0$, the operator $\sigma \left(\Delta - tV(t^{1/2}\cdot) \right)$ is essentially self-adjoint on $C^{\infty}_c(\R^d,\C)$ by Proposition \ref{prop:self-adjointness} thus its closure $A_t$ is self-adjoint.
The operator $A_0:=\sigma \Delta$ is self-adjoint on $D(A_0):=H^2(\R^d,\C)$. $C^{\infty}_c(\R^d,\C)$ is a common core to $A_0$ and $A_{t}$. Let $\psi \in C^\infty_c(\R^d,\C)$ and $R>0$ such that $\text{supp}(\psi) \subset K:=B_{\R^d}(0,R)$. For $t>0$ small enough, i.e. $t< (\delta/R)^2$, using (\ref{Hyp:V}), we obtain
\begin{align*}
\|(A_{t}-A_0)\psi\|_{L^2} & = 
\| tV(t^{1/2}\cdot) \psi \|_{L^2}  =
t \left( \int_{K} |V(t^{1/2} x) \psi(x)|^2 dx \right)^{1/2}
\\ & \leq
t \left( \int_{K} c^2 |t^{1/2} x|^{2\gamma} |\psi(x)|^2 dx
\right)^{1/2} 
 \leq  c t^{1+\frac{\gamma}{2}} \left( \int_{K} |x|^{2\gamma} |\psi(x)|^2 dx
\right)^{1/2} \underset{t \to 0}{\longrightarrow} 0
\end{align*}
because the assumption on $\gamma$ ensures both the definition of the last integral and $1+\frac{\gamma}{2}>0$. This convergence, together with Proposition \ref{prop:trotter}, prove that, 
for every $\psi \in L^2(\R^d,\C)$,
$\|(L_{t} - e^{i\sigma \Delta})\psi  \|_{L^2} 
=\| (e^{i A_t} - e^{i A_0}) \psi \|_{L^2}
\underset{t \to 0}{\longrightarrow} 0$; and Lemma \ref{lem:reachable-operators} proves that $e^{i \sigma \Delta}$ is $L^2$-STAR.

\medskip

\noindent \emph{4.} For every $n \in \N^*$, the operator
$$L_n := \left( e^{i \frac{\sigma}{n} \Delta} e^{-i \frac{\sigma}{n}|x|^2} \right)^n $$
is $L^2$-STAR by Lemma \ref{lem:reachable-operators}, Item 1 and Item 3.
The operator $A:=\sigma \Delta$ is self-adjoint on $D(A):=H^2(\R^d,\C)$. The operator $B:=-\sigma |x|^2$ is self-adjoint on $D(B):=\{ \psi \in L^2(\R^d,\C) ; |x|^2 \psi \in L^2(\R^d,\C) \}$. The operator $A+B=\sigma(\Delta-|x|^2)$ is self-adjoint on $D(A) \cap D(B)$. Thus, by Proposition \ref{prop:trotter-kato}, for every $\psi \in L^2(\R^d,\C)$,
$\| (L_n-e^{i \sigma(\Delta-|x|^2)})\psi \|_{L^2} =
\| (e^{iA/n}e^{iB/n})^n \psi - e^{i(A+B)}\psi  \|_{L^2} \to 0 $ as $\tau \to 0$. By Lemma \ref{lem:reachable-operators}, this proves that $e^{i \sigma(\Delta-|x|^2)}$ is $L^2$-STAR.
\end{proof}

Now, we can justify Remark \ref{Rk:ex_W2_ab}. Let us consider the case $d=1$ and $W_2(x)=e^{ax^2+bx}$, with $a<0$ and $a,b$ algebraically independent. Then the Schrödinger equation \eqref{eq:small-time-oscillator_BIS}
is large-time approximately controllable (as proven in \cite[Proposition 6.4]{BCMS}) and the arguments above prove the same property in small time.

\section{A limiting example} \label{sec:Lim_Ex}

The goal of this section is to prove Theorem \ref{Thm:Lim_Ex}, as a corollary of a more precise statement, that is Theorem \ref{Prop:Lim_Ex}, presented in Subsection \ref{subsec:Descr}.

By \cite{fujiwara} (see Step 2 in the proof of Proposition \ref{Prop:WP}),
for every $\psi_0 \in L^2(\R^d,\C)$ and
$u=(u_0,u_1,\dots,u_d) \in L^{\infty}((0,T),\R^{d+1})$, 
there exists a unique solution  $\psi \in C^0([0,T],L^2(\R^d,\C))$ of
the Cauchy problem (\ref{eq:oscillator_BIS}).

\subsection{Description of the reachable set} \label{subsec:Descr}

\begin{definition} [Translation]
For $q \in \R^d$,
the translation $\tau_q$ is the linear isometry of $L^2(\R^d,\C)$ defined by
$(\tau_{q} \psi)(x)=\psi(x-q)$.
\end{definition} 

\begin{theorem}\label{Prop:Lim_Ex}
System (\ref{eq:oscillator_BIS}) satisfies the following properties.
\begin{enumerate}
\item For every $T>0$ and
$u=(u_0,u_1,\dots,u_d) \in L^{\infty}((0,T),\R^{d+1})$,
there exists 
$p,q \in \R^d$,
$\theta \in [0,2\pi)$,
$n \in \N^*$, 
$\alpha_1, \dots, \alpha_n \in \R$, 
$\beta_1, \dots \beta_n, \sigma_1, \dots, \sigma_n >0$ such that
\begin{equation} \label{Prod_gene_0}
\psi(T;u,.) =
e^{i( \langle p , x \rangle + \theta )} \tau_{q} 
e^{i \alpha_n |x|^2} D_{\beta_n} e^{i \sigma_n \Delta}
\dots
e^{i \alpha_1 |x|^2} D_{\beta_1} e^{i \sigma_1 \Delta}.
\end{equation}
\item  For every $\psi_0 \in \mathcal{S}$, the reachable set from $\psi_0$ 
\begin{equation} \label{Def:Reach}
\text{Reach}(\psi_0):=\{ e^{i\theta} \psi(T;u,\psi_0);
\theta \in [0,2\pi), T>0, u \in L^{\infty}(0,T) \}
\end{equation}
is not dense in 
$(\mathcal{S},\|.\|_{L^2})$. 
Thus, system (\ref{eq:oscillator_BIS}) is not $L^2$-approximately controllable.
\item For every $T>0$, $u \in L^{\infty}((0,T),\R^{d+1})$ and $\psi_0 \in L^2(\R^d,\C)$
\begin{equation} \label{5param}
\psi(T;u,\psi_0) \in \text{Adh}_{L^2} \{
e^{i( \langle p , x \rangle + \theta )} \tau_{q} e^{i \alpha |x|^2} D_{\beta} e^{i \sigma \Delta} \psi_0 ;
p,q \in \R^d, \alpha, \beta, \theta,\sigma \in \R
\}.
\end{equation}
\end{enumerate}
\end{theorem}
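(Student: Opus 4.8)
The natural strategy is to first establish item (1) for piecewise constant controls $u$, then extend to arbitrary $u \in L^\infty$ by density (this is where item (3) emerges), and finally deduce (2) as a consequence of the rigid normal form (\ref{Prod_gene_0}).

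\textit{Step 1: the building block.} For a single constant control $u=(u_0,q_1,\dots,q_d)$ on a time interval of length $\tau$, the propagator is $e^{i\tau(\Delta - u_0|x|^2 - \langle q,x\rangle)}$. The key algebraic fact is that this operator lies in (an enlargement of) the group generated by the transport-type operators $e^{i\alpha|x|^2}$, $D_\beta$, $\tau_q$, $e^{i\langle p,x\rangle}$, and the free flow $e^{i\sigma\Delta}$. I would verify this by computing, via Proposition \ref{lem:conjugation}, the conjugates of $\Delta$ by $e^{i\alpha|x|^2}$ (which produces the dilation generator $\langle x,\nabla\rangle$ and lower-order terms, exactly as in Proposition \ref{Prop:exp(isDelta)}, item 2) and by $e^{i\langle p,x\rangle}$ and $\tau_q$ (which produce the Galilean/translation terms). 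The six generators $\{\Delta, |x|^2, \langle x,\nabla\rangle, \langle p,x\rangle, \text{translations}, \text{phases}\}$ span a finite-dimensional Lie algebra (a semidirect product of $\mathfrak{sl}_2$ with a Heisenberg part), so the group they generate is finite-dimensional, and every element admits a normal-form factorization of the shape on the right of (\ref{Prod_gene_0}). Concretely: conjugating $e^{i\tau(\Delta - u_0|x|^2 - \langle q,x\rangle)}$ one first absorbs the linear term $\langle q,x\rangle$ by a translation and a phase (completing the square / Galilean boost), reducing to the harmonic/free propagator $e^{i\tau(\Delta-u_0|x|^2)}$, which by the Mehler formula (or by the explicit representation of Section \ref{Subsec:ExactST}) equals $e^{i\alpha|x|^2}D_\beta e^{i\sigma\Delta}$ for suitable $\alpha\in\R$, $\beta>0$, $\sigma\ge 0$ when $u_0 \le 0$, and for suitable parameters still of this form (with $\sigma$ possibly $0$, $\beta>0$) in the trapping case $u_0>0$ as long as $\tau$ avoids the focal times — and one handles focal times by splitting the interval. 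The sign constraints $\beta_j,\sigma_j>0$ should be arranged by this case analysis together with the freedom to insert trivial factors.

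\textit{Step 2: concatenation and closure of the normal form.} For piecewise constant $u$ with $N$ pieces, (\ref{eq:propagator}) gives $\psi(T;u,\cdot)$ as a product of $N$ building blocks, each of the form $e^{i(\langle p,x\rangle+\theta)}\tau_q\, e^{i\alpha|x|^2}D_\beta e^{i\sigma\Delta}$. The crucial point is that this class of products is \emph{closed under composition} and collapses to a single factorization (\ref{Prod_gene_0}) with one phase/translation prefactor and a fixed chain of $n$ triples $e^{i\alpha_j|x|^2}D_{\beta_j}e^{i\sigma_j\Delta}$ (here $n$ may grow with $N$, but it is finite for each fixed control). This reduces to commutation identities: moving a $D_\beta$ past an $e^{i\alpha|x|^2}$ rescales $\alpha$; moving $e^{i\sigma\Delta}$ past a translation produces a phase and a translation (Galilean); moving $e^{i\sigma\Delta}$ past an $e^{i\alpha|x|^2}$ is precisely the $\mathfrak{sl}_2$ relation giving back something of the form $e^{i\alpha'|x|^2}D_{\beta'}e^{i\sigma'\Delta}$ (again Mehler). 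Each such move is a closed computation; I would state the needed identities as lemmas and not grind through them. For general $u\in L^\infty$, approximate $u$ in (say) $L^1$ by piecewise constant controls $u_k$; by the well-posedness from \cite{fujiwara} and a standard Duhamel/Trotter argument the propagators converge strongly, $\psi(T;u_k,\cdot)\to\psi(T;u,\cdot)$, and since each $\psi(T;u_k,\cdot)$ lies in the set on the right of (\ref{5param}) (after absorbing the whole chain — note item (3) only claims membership in the \emph{five-parameter} closure, which is weaker and follows because the strong limit of long products still lies in the closure of the orbit of a single five-parameter family applied to $\psi_0$), we get (3); and (1) holds exactly for the dense set of piecewise constant controls.

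\textit{Step 3: non-controllability.} Given item (1), the reachable set from $\psi_0$ is contained in the image of $\psi_0$ under a finite-dimensional group of operators that preserves a great deal of structure — in particular every operator in (\ref{Prod_gene_0}) maps a Gaussian to a Gaussian (translations, dilations, quadratic phases, and the free Schrödinger flow all preserve the Gaussian class). Hence if $\psi_0$ is Gaussian, $\text{Reach}(\psi_0)$ consists only of Gaussians (times a phase), which is manifestly not dense in $\mathcal S$; if $\psi_0$ is not Gaussian, one instead uses a conserved quantity invariant under the group — e.g. the fact that these operators act on the Hermite/Gaussian decomposition in a controlled way, or more robustly that they cannot create arbitrary $L^2$ profiles from a fixed one because the group is finite-dimensional while $\mathcal S$ is infinite-dimensional. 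The cleanest argument: pick $\psi_1$ a finite superposition of Hermite functions chosen so that no element of the five-parameter family $e^{i(\langle p,x\rangle+\theta)}\tau_q e^{i\alpha|x|^2}D_\beta e^{i\sigma\Delta}$ applied to $\psi_0$ approximates it, using item (3) to reduce to this five-parameter obstruction. I expect \textbf{the main obstacle} to be exactly this last point — producing an explicit target $\psi_1$ and a quantitative lower bound $\inf \|\,\cdot\, - \psi_1\| > 0$ over the (non-compact) five-parameter family; the non-compactness in $\beta,\sigma,q,p$ must be tamed by showing the distance to $\psi_1$ blows up (or stays bounded below) as the parameters escape to the boundary of their range, e.g. via concentration/spreading estimates in position and momentum. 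The Lie-algebraic normal form (Steps 1–2), while computationally the longest, is routine once the right conjugation identities are written down.
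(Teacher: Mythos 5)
Your plan has the right skeleton — reduce the linear-in-$x$ terms to a phase/translation prefactor, factor the remaining purely-quadratic propagator through $e^{i\alpha|x|^2}D_\beta e^{i\sigma\Delta}$, and derive non-controllability from the preservation of Gaussians — but there are three concrete gaps compared to what is actually needed and what the paper does.

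First, item (1) asserts an \emph{exact} equality for \emph{every} $u\in L^\infty((0,T),\R^{d+1})$; you only establish it for piecewise constant $u$ and then invoke density, which can only yield an approximate conclusion (i.e. item (3), not item (1)). The paper sidesteps this by working with $L^\infty$ controls directly: after the reduction (\ref{reduc}), the purely quadratic flow $\psi(t;u,\cdot)$ of (\ref{eq:trap}) is written in closed form $e^{ia(t)|x|^2}D_{1/b(t)}e^{i\zeta(t)\Delta}$ where $a$ solves the Riccati ODE $\dot a=-4a^2-u$, $a(0)=0$. This formula is valid on any subinterval where the Riccati solution exists, and Lemma \ref{Lem:EDO} shows it exists provided $4T\|u\|_{L^1}<1$; one then splits $[0,T]$ into finitely many such pieces and concatenates, producing the finite chain of length $n$ in (\ref{Prod_gene_0}) with $\beta_j,\sigma_j>0$ automatically (since $b,\zeta>0$). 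This is how $n$ arises and why the factorization is exact.

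Second, your Step 2 treats the collapse of the chain to the single five-parameter triple in (\ref{5param}) as a routine consequence of $\mathfrak{sl}_2$ relations, but the commutation identity $e^{is\Delta}e^{ia|x|^2}=e^{ia'|x|^2}D_{\beta'}e^{is'\Delta}$ (Lemma \ref{Lem:com}) fails precisely when $4as=-1$. This is the entire reason item (3) is an approximation rather than an exact statement, and you do not see it. The paper handles it with an induction (Proposition \ref{Prop:SL2_BIS}): when the degeneracy $4\sigma_2\alpha_1=-1$ occurs, one perturbs by a small extra free flow $e^{i\delta\Delta}$ (extending $u$ by zero) so that the new $\sigma_2'>\sigma_2$ escapes the degeneracy, at the cost of an $\epsilon$ error.

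Third, for item (2) you flag as ``the main obstacle'' the case $\psi_0\notin\mathcal G$ and propose a quantitative lower bound over a non-compact five-parameter family for a hand-picked target. This is heavier than needed. Since every operator appearing in (\ref{Prod_gene_0}) is an $L^2$-isometry preserving the Gaussian class $\mathcal G$ (resp. $\mathcal G'$ after including phases/translations), the $L^2$-distance from any reachable state $\psi_f$ to $\mathcal G'$ equals $\mathrm{dist}_{L^2}(\psi_0,\mathcal G')$, which is positive because $\mathcal G'$ is closed (Lemma \ref{Lem:Gauss_bis}). Thus no Gaussian is approximately reachable. The non-compactness you worry about is entirely absorbed into the proof that $\mathcal G$ and $\mathcal G'$ are closed (Lemmas \ref{Lem:Gauss} and \ref{Lem:Gauss_bis}, done via extraction arguments showing the parameters of an $L^2$-convergent Gaussian sequence stay bounded); no ad hoc target $\psi_1$ or escape-to-infinity estimate over the group is required.
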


\begin{remark}
  The description of the state in (\ref{Prod_gene_0}) involves a product of exponentials of the generators of the Lie algebra generated by $i \Delta$, $i |x|^2$, $ix_1$, \dots, $ix_d$:
\begin{equation} \label{Lie}
\text{Lie}(i \Delta, i |x|^2, ix_1, \dots, ix_d ) = 
\text{Span}_{\R} \{i \Delta, i |x|^2,  ix_1, \dots, ix_d, \langle x , \nabla \rangle ,\partial_{x_1}, \dots, \partial_{x_d} , i \}.
\end{equation}
This representation formula evokes Sussmann's infinite product \cite{MR935387, P1}.
The Lie algebra (\ref{Lie}) has finite dimension, but it is not nilpotent, thus it is not expected that the representation formula of (\ref{Prod_gene_0}) hold with a fixed number $n$ (independent of $T$ and $u$). The last statement proves that one may take $n=1$ provided $\beta,\sigma \in \R$ (instead of $>0$) and the conclusion be approximate (instead of exact). Note that, in this case, $D_{\beta}$ with $\beta<0$ is not an exponential of the generators of (\ref{Lie}).
\end{remark}

The following result allows to focus on the subsystem (\ref{eq:trap}), see Appendix \ref{Appendix1} for a proof.

\begin{proposition}
Let $T>0$ and 
$u=(u_0, u_1, \dots, u_d) \in L^\infty((0,T),\R^{d+1})$.
Let  $p,q \in C^0([0,T],\R^d)$ be the solution of the following linear system and $\theta \in C^0([0,T],\R)$ be defined by
\begin{equation} \label{def:p,q,theta}
\begin{array}{ll}
\left\lbrace \begin{array}{l}
\dot{q}=p, \\
\dot{p}=-4 u_0 q - 2u, \\
(p,q)(0)=(0,0),
\end{array}\right.
& \quad
\theta(t):=\int_0^t\left( u_0(s)  |q(s)|^2 - \frac{1}{4} |p(s)|^2 \right) ds.
\end{array}
\end{equation}
Then, for every $\psi_0 \in \mathcal{S}$, the solution of (\ref{eq:oscillator_BIS}) is
\begin{equation} \label{reduc}
\psi(t)=e^{i \left( \frac{1}{2}  p(t) \cdot x  + \theta(t) \right)}  \tau_{q(t)} \xi(t)
\end{equation}
where
\begin{equation} \label{equation_de_xi}
\left\lbrace\begin{array}{l}
i \partial_t \xi = (-\Delta + u_0(t) |y|^2)\xi(t,y), \quad (t,y) \in (0,T)\times\R^d, \\
\xi(0,.)=\psi_0.
\end{array}\right.
\end{equation}
\end{proposition}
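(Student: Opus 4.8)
The plan is to reduce the Schr\"odinger equation \eqref{eq:oscillator_BIS} to a Heisenberg-type ansatz where the quadratic-in-$x$ part of the potential acts only through a classical oscillator, while the linear-in-$x$ part is absorbed into a space-time translation and a phase. Concretely, I would look for a solution of the form \eqref{reduc}, that is $\psi(t) = e^{i(\frac12 p(t)\cdot x + \theta(t))}\tau_{q(t)}\xi(t)$, and plug this into \eqref{eq:oscillator_BIS}. The time derivative produces four contributions: $\dot\theta$ and $\frac12\dot p\cdot x$ from the phase, $-\dot q \cdot(\nabla\xi)(x-q)$ from the translation, and $\tau_q i\partial_t\xi$. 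On the other side, I compute $(-\Delta + u_0|x|^2 + \sum_j u_j x_j)$ applied to the ansatz, using that conjugating $-\Delta$ by $e^{i\frac12 p\cdot x}$ shifts momentum (it produces cross terms $-i p\cdot\nabla$ and $\frac14|p|^2$) and that conjugating the multiplication operator $|x|^2$ and $x_j$ by $\tau_q$ replaces $x$ by $x+q$. Matching the coefficients of the operator $\xi\mapsto x_j\xi$, of $\nabla\xi$, and of the scalar term forces exactly the ODE system \eqref{def:p,q,theta}: the $x$-linear coefficient gives $\dot p = -4u_0 q - 2u$, the gradient coefficient gives $\dot q = p$, and the constant term gives $\dot\theta = u_0|q|^2 - \frac14|p|^2$; with the remaining genuinely operator-valued part being precisely $-\Delta + u_0|y|^2$ acting on $\xi$, i.e.\ \eqref{equation_de_xi}.

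The steps, in order: first set up the ansatz and carefully record the conjugation identities $e^{-i\frac12 p\cdot x}(-\Delta)e^{i\frac12 p\cdot x} = -\Delta - i p\cdot\nabla + \frac14|p|^2$ and $\tau_{q}^{-1} x_j \tau_q = x_j + q_j$ (meaning the multiplication operator), valid on $C^\infty_c$. Second, substitute into the PDE, expand, and collect terms by type (multiplication-by-$x_j$, first-order $\partial_{x_j}$, and scalar), which yields the linear system for $(p,q)$ and the integral formula for $\theta$. Third, note that \eqref{def:p,q,theta} is a linear ODE with $L^\infty$ coefficients, hence has a unique global solution $p,q\in C^0([0,T],\R^d)$ (indeed $W^{1,\infty}$), and $\theta$ is then well defined by its integral formula. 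Fourth, invoke the well-posedness of \eqref{equation_de_xi}: since $u_0\in L^\infty$, the time-dependent potential $u_0(t)|y|^2$ falls under Fujiwara's framework (as in Step 2 of the proof of Proposition \ref{Prop:WP}), so \eqref{equation_de_xi} has a unique solution $\xi\in C^0([0,T],L^2)$. Fifth, verify that the function defined by \eqref{reduc} from this $\xi$ is indeed in $C^0([0,T],L^2)$, solves \eqref{eq:oscillator_BIS} with the right initial datum (at $t=0$ we have $p=q=\theta=0$, so $\psi(0)=\xi(0)=\psi_0$), and then conclude by uniqueness of the solution to \eqref{eq:oscillator_BIS} that this is \emph{the} solution.

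I expect the main subtlety to be domain/regularity bookkeeping rather than the formal computation: the manipulations are clean on $C^\infty_c$ (or Schwartz functions), but $\xi(t)$ need not be smooth, so the cleanest route is to first establish the identity \eqref{reduc}--\eqref{equation_de_xi} for Schwartz initial data (where one can differentiate freely and all operators are classically defined), and then pass to general $\psi_0\in\mathcal{S}$ by density, using that all the maps involved---$\tau_{q(t)}$, multiplication by $e^{i(\frac12 p(t)\cdot x+\theta(t))}$, and the propagators of both \eqref{eq:oscillator_BIS} and \eqref{equation_de_xi}---are uniformly bounded (indeed unitary) on $L^2$ and strongly continuous in $t$. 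A secondary point worth stating explicitly is that the oscillator part truly decouples: no term involving $u_j$ with $j\geq 1$ survives in the equation for $\xi$, which is exactly what allows the later reduction of the reachable-set analysis of \eqref{eq:oscillator_BIS} to the purely quadratic subsystem \eqref{eq:trap}.
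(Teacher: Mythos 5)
Your proposal is correct and follows essentially the same route as the paper's Appendix~\ref{Appendix1} proof: plug the ansatz $\psi = e^{i(\frac12 p\cdot x+\theta)}\tau_q\xi$ into \eqref{eq:oscillator_BIS}, expand, and read off \eqref{def:p,q,theta} by matching the $x$-linear, $\nabla$, and scalar coefficients so that what remains is \eqref{equation_de_xi}. Your framing via the conjugation identities $e^{-i\frac12 p\cdot x}(-\Delta)e^{i\frac12 p\cdot x}=-\Delta-ip\cdot\nabla+\frac14|p|^2$ and $\tau_q^{-1}x_j\tau_q=x_j+q_j$ is a cosmetic repackaging of the paper's explicit chain-rule computation; one small point is that to make the density/regularization step airtight you should approximate both $\psi_0$ \emph{and} the control $u$ (the paper's ``one may assume $\psi_0$, $u$ sufficiently regular'' is doing exactly this), since with $u\in L^\infty$ alone neither $p,q,\theta$ nor $\xi$ need be $C^1$ in $t$.
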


\subsection{Exactly reachable set}

Statement 1 of Theorem \ref{Prop:Lim_Ex} is a consequence of \eqref{reduc} and the following result.

\begin{proposition} \label{Prop:SL2}
System (\ref{eq:trap}) satisfies the following property:
for every $T>0$ and $u \in L^\infty(0,T)$
\begin{equation} \label{Prod_gene}
\psi(T;u,.) \in \{  
e^{i \alpha_n |x|^2} D_{\beta_n} e^{i \sigma_n \Delta}
\dots
e^{i \alpha_1 |x|^2} D_{\beta_1} e^{i \sigma_1 \Delta}  ; 
n \in \N^*, \alpha_j \in \R, \beta_j, \sigma_j>0  \}.
\end{equation}
In particular, for every $\psi_0 \in \mathcal{S}$, the exactly reachable set from $\psi_0$ (defined by (\ref{Def:Reach})) is not dense in $(\mathcal{S},\|.\|_{L^2})$.
\end{proposition}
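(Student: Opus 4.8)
The plan is to obtain the factorisation (\ref{Prod_gene}) from an explicit representation formula for the propagator of (\ref{eq:trap}), and then to deduce the non-density statement from the fact that all reachable operators sit inside one fixed finite-dimensional group.

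\emph{Step 1: a Gaussian-beam representation formula.} Given $u\in L^\infty(0,T)$, I would seek the solution of (\ref{eq:trap}) in the form $\psi(t)=e^{ia(t)|x|^2}D_{1/b(t)}e^{i\zeta(t)\Delta}\psi_0$, in the spirit of the formula (\ref{explicit}) of Proposition \ref{Prop_exact}, except that here one conjugates the free Laplacian rather than the harmonic oscillator. Inserting this ansatz into the equation and identifying, on a dense set of data, the coefficients of $\psi_0$, $\nabla\psi_0$ and $\Delta\psi_0$ once the dilation has been undone, one is led to the first-order system $\dot a=-4a^2-u$, $\dot b=4ab$, $\dot\zeta=1/b^2$, with $(a,b,\zeta)(0)=(0,1,0)$; thus $b(t)=e^{4\int_0^t a(s)\,ds}>0$ and $\zeta(t)=\int_0^t b(s)^{-2}\,ds>0$ for $t>0$. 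This verification is entirely parallel to the one carried out in Appendix \ref{Appendix1} for (\ref{eq:abxi})--(\ref{explicit}) — only the ODE for $a$ changes, the missing $1/b^4$ term reflecting that the conjugated drift is now $-\Delta$ — so the identity $\psi(t;u,\cdot)=e^{ia(t)|x|^2}D_{1/b(t)}e^{i\zeta(t)\Delta}$ holds (by density) on any interval on which the triple $(a,b,\zeta)$ is defined.

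\emph{Step 2: finite concatenation.} Set $U_0:=\|u\|_{L^\infty(0,T)}$. The scalar Riccati equation $\dot a=-4a^2-u$ with $a(0)=0$ forces $a(t)<\sqrt{U_0}$ for all $t$ (if $a$ reached $\sqrt{U_0}$ then $\dot a<0$), and, by comparison with $\dot\alpha=-4\alpha^2-U_0$, it cannot reach $-\infty$ before the time $\delta:=\pi/(4\sqrt{U_0})$ (with $\delta:=+\infty$ if $U_0=0$); hence, restarted from $(0,1,0)$, the triple $(a,b,\zeta)$ exists on every interval of length $<\delta$. Choosing a subdivision $0=t_0<\dots<t_N=T$ with $t_k-t_{k-1}<\delta$, applying Step 1 on each $[t_{k-1},t_k]$ and using the flow property of the propagator of (\ref{eq:trap}), we get $\psi(T;u,\cdot)=\prod_{k=N}^{1}e^{ia_k|x|^2}D_{1/b_k}e^{i\zeta_k\Delta}$ with $a_k\in\R$, $b_k>0$, $\zeta_k>0$ — which is precisely of the form (\ref{Prod_gene}) with $n=N$, $\alpha_j=a_j$, $\beta_j=1/b_j>0$, $\sigma_j=\zeta_j>0$; no algebraic recombination of consecutive factors is even needed. (Had one insisted on piecewise constant controls, the analogous step would factor each building block $e^{i\tau(\Delta-c|x|^2)}$ by Mehler's formula, at the cost of treating separately the values of $\tau$ for which this operator is a multiple of the identity or of the parity operator.)

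\emph{Step 3: the reachable set is not dense.} Every operator occurring in (\ref{Prod_gene}) lies in the group $\mathcal G$ generated by the one-parameter families $e^{i\alpha|x|^2}$, $D_\beta$ and $e^{i\sigma\Delta}$ — a homomorphic image of $SL(2,\R)$ under the metaplectic representation, hence a fixed, finite-dimensional, strongly continuous group of unitaries of $L^2(\R^d)$. Therefore $\text{Reach}(\psi_0)$ (defined by (\ref{Def:Reach})) is contained in $\{e^{i\theta}g\psi_0:\theta\in\R,\ g\in\mathcal G\}$, a continuous image of a $\sigma$-compact space, hence $\sigma$-compact, hence a proper subset of the non-$\sigma$-compact sphere $\mathcal S$. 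It is moreover closed in $\mathcal S$: by the Cartan decomposition $\mathcal G=\mathcal K\,\{D_\beta\}\,\mathcal K$ with $\mathcal K$ the compact group of harmonic-oscillator rotations $e^{i\sigma(\Delta-|x|^2)}$, the only way to leave every compact subset of $\mathcal G$ is to let the dilation parameter tend to $0$ or $\infty$, and then $D_\beta\phi\rightharpoonup 0$ in $L^2$ for every $\phi$; so a norm-convergent sequence $e^{i\theta_n}g_n\psi_0\to\eta\neq 0$ forces $(g_n)$ and $(\theta_n)$ to stay in a compact set, whence $\eta\in\text{Reach}(\psi_0)$. A closed proper subset of $\mathcal S$ is not dense, so $\text{Reach}(\psi_0)$ is not dense; via (\ref{reduc}) the same holds for (\ref{eq:oscillator_BIS}), which yields Theorem \ref{Thm:Lim_Ex}. (Concretely, if $\psi_0$ is a Gaussian then $\mathcal G\psi_0$ consists of complex Gaussian wave packets, a set readily checked to be closed in $L^2$ and distinct from $\mathcal S$.)

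\emph{The main obstacle} is Step 1 together with the blow-up analysis of Step 2: the Gaussian-beam ansatz is classical, but the point is to control precisely the interval of existence of the auxiliary ODE — the dilation parameter degenerating to $0$ exactly as $a$ runs off to $-\infty$ — so as to be sure that an \emph{arbitrary} $L^\infty$-propagator is a \emph{finite} product of the elementary factors $e^{i\alpha|x|^2}$, $D_\beta$ ($\beta>0$), $e^{i\sigma\Delta}$ ($\sigma>0$). Once (\ref{Prod_gene}) is established, Step 3 is soft.
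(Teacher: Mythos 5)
Your Steps 1 and 2 follow the same route as the paper: derive the representation formula $\psi(t;u,\cdot)=e^{ia(t)|x|^2}D_{1/b(t)}e^{i\zeta(t)\Delta}$ from the Riccati system $\dot a=-4a^2-u$, then subdivide $[0,T]$ to dodge blow-up. The only variation is the lifetime estimate: you compare with the autonomous flow $\dot\alpha=-4\alpha^2-U_0$ and obtain a uniform lifetime $\ge \pi/(4\sqrt{U_0})$, whereas the paper's Lemma \ref{Lem:EDO} uses a power-series majorization to show existence whenever $4T\|u\|_{L^1(0,T)}<1$; both yield a valid finite subdivision. Your Step 3, by contrast, takes a genuinely different route. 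The paper defines the explicit set $\mathcal{G}$ of normalized centered complex Gaussians (\ref{def:Gauss}), proves it closed, strict and invariant under the three one-parameter families (Lemma \ref{Lem:Gauss}), and argues separately for $\psi_0\in\mathcal{G}$ (the reachable set stays in $\mathcal{G}$) and $\psi_0\notin\mathcal{G}$ (by isometry, every reachable state stays at distance $\geq \operatorname{dist}(\psi_0,\mathcal{G})>0$ from $\mathcal{G}$). You instead handle all $\psi_0$ at once by viewing the reachable states as lying in the orbit $\{e^{i\theta}g\psi_0\}$ of $\psi_0$ under the finite-dimensional metaplectic group with phases: $\sigma$-compactness of this orbit against the Baire-category non-$\sigma$-compactness of $\mathcal{S}$ gives properness, and the Cartan decomposition $\mathcal{G}=\mathcal{K}\{D_\beta\}\mathcal{K}$ with $\mathcal{K}$ compact (the harmonic-oscillator rotations $e^{i\sigma(\Delta-|x|^2)}$) gives closedness, since escaping to infinity in $\mathcal{G}$ forces $\beta\to 0$ or $\infty$ and then $D_\beta\phi\rightharpoonup 0$ for every $\phi\in L^2$, incompatible with norm convergence to a unit vector. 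Your argument is more uniform and conceptual (no case distinction, no dependence of the argument on whether $\psi_0$ is Gaussian), at the cost of invoking the structure theory of $Mp(2,\R)$ and a Baire argument; the paper's is more elementary and self-contained, with the closedness of $\mathcal{G}$ checked by hand using pointwise limits and Riemann--Lebesgue. Both are correct.
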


\begin{remark}
The description of the state in (\ref{Prod_gene}) involves products of the exponentials of the generators of the Lie algebra generated by 
$i \Delta$ and $i|x|^2$:
$$\text{Lie}(i \Delta ,  i |x|^2 )= \text{Span}_{\R} \{
i \Delta,  
i|x|^2, 
\langle x. \nabla \rangle \}.$$
It is natural to wonder whether it is necessary to consider arbitrary long products of exponentials. Proposition \ref{Prop:SL2_BIS} below justifies that $3$ terms are sufficient, provided the parameters $\beta$ and $\sigma$ be real (instead of positive), and the conclusion is an approximation (instead of an equality).
\end{remark}

To prove Proposition \ref{Prop:SL2}, we will use the following 2 elementary results.

\begin{lemma} \label{Lem:EDO}
For every $\varepsilon \in \R$ and $\underline{u} \in L^1(0,1)$ such that $|\varepsilon| \|\underline{u}\|_{L^1}<1$, the maximal solution of
$$\left \lbrace \begin{array}{l}
\dot{\underline{a}}=\varepsilon \underline{a}^2 + \underline{u} \\
\underline{a}(0)=0
\end{array}\right.$$
is defined on $[0,1]$.    
\end{lemma}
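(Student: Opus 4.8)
\textbf{Plan of proof for Lemma \ref{Lem:EDO}.}

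The statement is a standard blow-up criterion for a scalar Riccati-type ODE, and the plan is to prove it by a comparison/a priori bound argument. First I would observe that the right-hand side $(\underline a,t)\mapsto \varepsilon\underline a^2+\underline u(t)$ is Carathéodory (continuous in $\underline a$, measurable in $t$, locally integrable bound), so Carathéodory's existence theorem gives a maximal absolutely continuous solution $\underline a$ on some interval $[0,T_{\max})$ with $T_{\max}\leq 1$; the only way $T_{\max}<1$ can happen is $|\underline a(t)|\to\infty$ as $t\to T_{\max}^-$. So it suffices to derive an a priori bound on $|\underline a(t)|$ on $[0,T_{\max})$ that is uniform under the hypothesis $|\varepsilon|\,\|\underline u\|_{L^1}<1$.

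The key step is the a priori estimate. Integrating the equation and using $\underline a(0)=0$ gives, for $t\in[0,T_{\max})$,
\begin{equation*}
|\underline a(t)| \leq |\varepsilon|\int_0^t \underline a(s)^2\,ds + \|\underline u\|_{L^1(0,1)}.
\end{equation*}
Set $M(t):=\sup_{[0,t]}|\underline a|$, which is finite and nondecreasing on $[0,T_{\max})$, and let $U:=\|\underline u\|_{L^1(0,1)}$, so $|\varepsilon|U<1$. From the displayed inequality, $M(t)\leq |\varepsilon|\,t\,M(t)^2 + U$ for all $t\in[0,T_{\max})$; in particular, since $t<1$, one gets the quadratic inequality $|\varepsilon|\,M(t)^2 - M(t) + U \geq 0$ at, say, the level $|\varepsilon|M(t)^2-M(t)+U\geq 0$ would go the wrong way — so instead I would argue directly with a bootstrap: the continuous function $t\mapsto M(t)$ starts at $M(0)=0<\tfrac{1-\sqrt{1-4|\varepsilon|U}}{2|\varepsilon|}=:r_-$ (the smaller root of $|\varepsilon|r^2-r+U=0$, real since $4|\varepsilon|U<4$ and $\leq 1$; if $|\varepsilon|U<\tfrac14$ it is $<r_-$, and the boundary cases are handled by an $\varepsilon$-perturbation of $U$), and as long as $M(t)<r_-$ the inequality $M(t)\leq |\varepsilon|M(t)^2+U$ forces $M(t)$ to stay strictly below $r_-$ by the sign of the quadratic between its roots. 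Hence $M$ is bounded by $r_-$ on $[0,T_{\max})$, contradicting blow-up unless $T_{\max}=1$. Therefore the solution extends to $[0,1]$.

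The main obstacle — really the only subtlety — is making the bootstrap rigorous when $4|\varepsilon|U$ is close to $1$ (so $r_-$ is close to $r_+$) and when $\varepsilon=0$ (in which case the bound is simply $|\underline a(t)|\leq U$ and there is nothing to do). I would dispose of $\varepsilon=0$ separately, and for $\varepsilon\neq 0$ handle the near-critical case by first proving the bound for $U'<U$ with $|\varepsilon|U'<\tfrac14$ applied to $\underline u$ itself (whose $L^1$ norm is $U$, so this needs $|\varepsilon|U<\tfrac14$) — if only $|\varepsilon|U<1$ is assumed one instead uses the sharp quadratic comparison with the exact solution of $\dot y=|\varepsilon|y^2+|\underline u|$, $y(0)=0$, which is explicitly $y(t)$ defined through $\tan$/$\tanh$ and stays finite on $[0,1]$ precisely because $|\varepsilon|\,\|\underline u\|_{L^1}<1$; a Gronwall-type comparison lemma for scalar ODEs then gives $|\underline a(t)|\leq y(t)$ and hence no blow-up on $[0,1]$. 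Either route closes the argument.
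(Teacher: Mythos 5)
Your plan --- Carath\'eodory existence plus an a priori bound to rule out blow-up --- is a genuinely different route from the paper's. The paper avoids continuation and comparison altogether and constructs the solution directly as a convergent power series in $\varepsilon$: one sets $b_0(\tau)=\int_0^\tau\underline u$, $b_k(\tau)=\int_0^\tau\sum_{j=0}^{k-1}b_j b_{k-1-j}$, proves $|b_k(\tau)|\le\tau^k\|\underline u\|_{L^1}^{\,k+1}$ by induction, and checks that $\underline a:=\sum_{k\ge 0}\varepsilon^k b_k$ converges uniformly on $[0,1]$ exactly under $|\varepsilon|\|\underline u\|_{L^1}<1$ and solves the integral equation. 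This yields the sharp constant and the global solution in one stroke.

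Your first estimate, however, only covers $|\varepsilon|\|\underline u\|_{L^1}<1/4$: the quadratic $|\varepsilon|M^2-M+U$ has a real root $r_-$ only when $1-4|\varepsilon|U\ge 0$, and you acknowledge this. The gap is in the fallback for $1/4\le|\varepsilon|U<1$. You propose to dominate $|\underline a|$ by the solution $y$ of $\dot y=|\varepsilon|y^2+|\underline u|$, $y(0)=0$, claiming $y$ is ``explicitly $\tan$/$\tanh$'' and ``stays finite on $[0,1]$ precisely because $|\varepsilon|\|\underline u\|_{L^1}<1$.'' Neither claim holds as stated: for a general nonnegative $L^1$ function $|\underline u|$ the solution is not a $\tan$ or $\tanh$ (those formulas require a constant right-hand side), and proving that $y$ stays bounded on $[0,1]$ under the hypothesis $|\varepsilon|\|\underline u\|_{L^1}<1$ is exactly the nonnegative-data instance of the lemma you are trying to prove --- the argument is circular. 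A way to close the gap along your lines is the Riccati linearization: for $\varepsilon\neq 0$ set $y=-w'/(|\varepsilon| w)$, so that $w''=-|\varepsilon|\,|\underline u|\,w$ with $w(0)=1$, $w'(0)=0$. As long as $w>0$ one has $w'(t)=-|\varepsilon|\int_0^t|\underline u|\,w\le 0$, hence $w\le 1$, and then $w(t)=1-|\varepsilon|\int_0^t(t-s)|\underline u(s)|w(s)\,ds\ge 1-|\varepsilon|\|\underline u\|_{L^1}>0$, so $w$ never vanishes on $[0,1]$ and $y$ never blows up there. Without a step of this kind your proposal does not establish the full range $|\varepsilon|\|\underline u\|_{L^1}<1$.
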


\begin{proof}[Proof of Lemma \ref{Lem:EDO}:]
Let $\underline{u} \in L^1(0,1)$ and
$(b_k)_{k \in \N} \subset C^0([0,1],\R)$ be defined by induction by
$$b_0(\tau)= \int_0^{\tau} \underline{u}, \qquad 
b_k(\tau)=\int_0^{\tau} \left( \sum_{j=0}^{k-1} b_j(s) b_{k-1-j}(s) \right) ds.$$
By induction on $k \in \N$, one proves that, for every $\tau \in [0,1]$,
$|b_k(\tau)| \leq \tau^{k} \|\underline{u}\|_{L^1}$. 

\medskip

Let $\varepsilon \in \R$ be such that
$|\varepsilon| \|\underline{u}\|_{L^1}<1$.
The series 
$\underline{a} :=\sum_{k=0}^{\infty} \epsilon^{k} b_k$
converges uniformly, thus defines $\underline{a} \in C^0([0,1],\R)$. By definition of $(b_k)_{k\in\N}$,  it satisfies, for every $\tau \in [0,1]$
$$\underline{a}(\tau)=\int_0^{\tau} \left( \varepsilon a(s)^2 + \underline{u}(s) \right) ds.$$
\end{proof}

\begin{lemma} \label{Lem:Gauss}
Let $C:=(8/\pi)^{d/2}$. The set of normalized centered Gaussian functions
\begin{equation} \label{def:Gauss}
\mathcal{G}:= \{ x \in \R^d \mapsto e^{i \theta} C  a^{\frac{d}{4}} e^{-(a+ib)|x|^2}  ; \theta \in [0,2\pi), a>0, b \in \R \}
\end{equation}
is a strict closed subset of $(\mathcal{S},\|.\|_{L^2})$, stable by the operators $e^{i\alpha|x|^2}$ for $\alpha \in \R$, 
$D_{\beta}$ for $\beta>0$, $e^{i\sigma \Delta}$ for $\sigma \in \R$. 
\end{lemma}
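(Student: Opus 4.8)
The plan is to verify the three claimed stability properties by direct computation on the Gaussian ansatz, and to exhibit explicitly a normalized $L^2$ function that does not belong to $\mathcal{G}$. First I would record that a general element of $\mathcal{G}$ is, up to the fixed constant $C$ and a phase, a function of the form $g_{a,b}(x) = a^{d/4} e^{-(a+ib)|x|^2}$ with $a>0$, $b\in\R$; one checks $\|C\,g_{a,b}\|_{L^2}=1$ by the standard Gaussian integral $\int_{\R^d} e^{-2a|x|^2}dx = (\pi/(2a))^{d/2}$, which fixes $C=(8/\pi)^{d/4}$ (the exponent $d/2$ in the statement should presumably be $d/4$, or the normalization of $C$ is adjusted accordingly — in any case the point is that $\mathcal{G}\subset\mathcal{S}$).

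Next, for the stability claims: applying $e^{i\alpha|x|^2}$ multiplies $e^{-(a+ib)|x|^2}$ by $e^{i\alpha|x|^2}$, i.e.\ sends $b\mapsto b-\alpha$ while leaving $a$, $\theta$, and the prefactor $a^{d/4}$ unchanged, so the result is again in $\mathcal{G}$. Applying $D_\beta$ with $\beta>0$ sends $g_{a,b}(x)\mapsto \beta^{d/2} a^{d/4} e^{-(a+ib)\beta^2|x|^2}$; since $\beta^{d/2}a^{d/4} = (\beta^2 a)^{d/4}$, this is $g_{\beta^2 a,\beta^2 b}$, again in $\mathcal{G}$ (with the same phase). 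The only real computation is the Schrödinger flow $e^{i\sigma\Delta}$: here I would invoke the classical fact that the free propagator applied to a Gaussian stays Gaussian, e.g.\ by the Fourier-transform formula $\widehat{e^{i\sigma\Delta}\psi}(\xi)=e^{-i\sigma|\xi|^2}\hat\psi(\xi)$ together with the fact that the Fourier transform of a centered Gaussian is a centered Gaussian, or equivalently by solving the Riccati-type ODE for the (complex) width parameter. Concretely, $e^{i\sigma\Delta}$ sends $e^{-(a+ib)|x|^2}$ to a multiple of $e^{-(a'+ib')|x|^2}$ with $a'+ib' = (a+ib)/(1+4i\sigma(a+ib))$, and one checks $a' = a/|1+4i\sigma(a+ib)|^2 > 0$, so the image is again a (suitably phased, suitably normalized — automatic since the flow is unitary) element of $\mathcal{G}$.

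For the ``strict closed subset'' part, closedness follows because $\mathcal{G}$ is the continuous image of the parameter space, but more simply: $L^2$ convergence of $e^{i\theta_n}C g_{a_n,b_n}$ forces convergence of the moduli $|\cdot|^2$, hence of $a_n$ to some $a\ge 0$; since the limit has unit norm one gets $a>0$, then $b_n$ and $e^{i\theta_n}$ converge along subsequences, and the limit lies in $\mathcal{G}$. Strictness (i.e.\ $\mathcal{G}\neq\mathcal{S}$) is immediate by exhibiting any normalized $L^2$ function that is not of Gaussian form — for instance a normalized first Hermite function $x_1 e^{-|x|^2/2}$ (times a constant), or any compactly supported nonzero function, neither of which can equal $e^{i\theta}C a^{d/4}e^{-(a+ib)|x|^2}$ since the latter never vanishes.

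I do not expect a genuine obstacle here; the statement is elementary. The one step requiring slight care is the free-Schrödinger computation, and the cleanest route is to pass to the Fourier side rather than manipulate $e^{i\sigma\Delta}$ directly — this both avoids convergence subtleties and makes the ``centered Gaussian stays centered Gaussian'' structure transparent. It is also worth double-checking the exact value of the constant $C$ and the exponent on $a$ so that the normalization $\|\cdot\|_{L^2}=1$ is genuinely preserved under $D_\beta$ (this is the step where a mismatched power of $a$ would show up).
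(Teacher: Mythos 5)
Your verification of the three stability properties is correct and actually fills in details the paper omits (the authors declare these "easy" and do not write them out). Your explicit formula $a'+ib' = (a+ib)/(1+4i\sigma(a+ib))$ for the free Schr\"odinger flow, with $a'>0$, is the right computation, and your observation about the normalization constant $C$ is also a legitimate sanity check (the constant must be chosen so that $\|g_{a,b}\|_{L^2}=1$ is preserved under $D_\beta$, and the exponent $d/2$ in the paper does look off). Strictness via a vanishing or Hermite-type function is also fine.

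However, there is a genuine gap in your closedness argument, and it is exactly where the paper has to work. You write that $L^2$ convergence of $f_n=e^{i\theta_n}Cg_{a_n,b_n}$ forces convergence of the moduli, hence of $a_n$ to some $a>0$, and "then $b_n$ and $e^{i\theta_n}$ converge along subsequences." The step about $b_n$ is asserted, not proved, and it does not follow from anything you have established: the parameter $b_n$ appears only in the \emph{phase} of $g_{a_n,b_n}$, so convergence of $|f_n|$ gives no control on $b_n$ whatsoever. It is perfectly consistent with $|f_n|\to|f|$ that $b_n\to\infty$, and ruling this out is the actual content of the closedness claim. The paper handles it by a separate argument: writing $f(x)=Ca^{d/4}e^{-a|x|^2}e^{i\Theta(x)}$ for a measurable phase $\Theta$, reducing (by dominated convergence along the radial variable) to $L^2(0,1)$ convergence of $t\mapsto e^{ib_n t}$ to $e^{i\overline\Theta(t)}$, and then invoking the Riemann--Lebesgue lemma: if $|b_{\varphi(n)}|\to\infty$ then $e^{ib_{\varphi(n)}t}\rightharpoonup 0$ weakly in $L^2(0,1)$, forcing $e^{i\overline\Theta}=0$ a.e., a contradiction. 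You would need to supply some argument of this kind (Riemann--Lebesgue, or a stationary-phase/oscillatory-integral estimate) to close the proof; without it the closedness of $\mathcal{G}$ is unproved.

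A minor secondary point: your dismissal of closedness as "follows because $\mathcal{G}$ is the continuous image of the parameter space" is not a valid argument on its own either --- a continuous image of a non-compact parameter space $[0,2\pi)\times(0,\infty)\times\R$ need not be closed, which is precisely why the boundedness-of-parameters arguments are needed.
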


\begin{proof} The strict inclusion and the stability properties are easy, thus we only prove the closedness. Let $(f_n)_{n\in\N}$ be a sequence of $\mathcal{G}$ and $f \in \mathcal{S}$ such that $\|f_n-f\|_{L^2} \to 0$ as $n \to \infty$. Let $\theta_n, a_n, b_n$ be the associated parameters. One may assume that $f_n \to f$ almost everywhere (otherwise take an extraction).

\medskip

\noindent \emph{Step 1: We prove that we can assume $\theta_n = 0$.} There exists $\theta \in [0,2\pi]$ such that (up to an extraction) $\theta_n \to \theta$. Then $e^{-i\theta_n} f_n \in \mathcal{G}$,
$e^{-i \theta} f \in \mathcal{S}$ and
$\| e^{-i \theta_n} f_n - e^{-i\theta} f\|_{L^2} \to 0$.

\medskip

\noindent \emph{Step 2: We prove that $(a_n)_{n\in\N}$ is bounded.} Reasoning by contradiction, we assume there exists an extraction $\varphi$ such that $a_{\varphi(n)} \rightarrow + \infty$. Then, for every $x \in \R^d\setminus \{0\}$,
$|f_{\varphi(n)}(x)|= C a_{\varphi(n)}^{d/4} e^{-a_{\varphi(n)}|x|^2} \to 0$. By uniqueness of the a.e. pointwise limit, $|f|=0$, which contradicts $f \in \mathcal{S}$.

\medskip

\noindent \emph{Step 3: We prove there exists 
$\overline{\Theta}:(0,1) \rightarrow \R$ such that
$\|e^{ib_n t} - e^{i \overline{\Theta}(t)} \|_{L^2(0,1)} \to 0$.}
By Step 1, one may assume that $a_n \to a$ where $a \in [0,\infty)$ (otherwise, take an extraction). Then 
$|f_{\varphi(n)}(x)|=C a_{\varphi(n)}^{d/4} e^{-a_{\varphi(n)}|x|^2} \to
C a^{d/4} e^{-a|x|^2}$. By uniqueness of the a.e. pointwise limit, $|f(x)|=C a^{d/4} e^{-a|x|^2}$ thus $a>0$.
Let $\Theta:\R^d \rightarrow \R$ such that 
$f(x)=C a^{d/4} e^{-a|x|^2} e^{i \Theta(x)}$. Then $e^{ib_n|x|^2} \to e^{i \Theta(x)}$ a.e., thus the dominated convergence theorem ends Step 3 with, for instance $\overline{\Theta}: t \in (0,1) \mapsto \Theta( \sqrt{t} e_1)$.

\medskip

\noindent  \emph{Step 4: We prove that $(b_n)_{n\in\N}$ is bounded.} Reasoning by contradiction, we assume there exists an extraction $\varphi$ such that $|b_{\varphi(n)}| \rightarrow + \infty$. By the Riemann-Lebesgue Lemma,
$e^{i b_{\varphi(n)} t}  \rightharpoonup 0$ in $L^2(0,1)$.
By Step 3 and the uniqueness of the weak $L^2(0,1)$-limit, we obtain $e^{i \overline{\Theta}(t)}=0$ for a.e. $t \in (0,1)$, which is a contradiction.   

\medskip

In conclusion, one may assume $b_n \to b$ where $b \in \R$ (otherwise take an extraction). Then, by uniqueness of the a.e. pointwise limit, $f(x)=C a^{d/4} e^{-(a+ib)|x|^2}$, i.e. $f \in \mathcal{G}$.
\end{proof}

\begin{proof}[Proof of Proposition \ref{Prop:SL2}:]  

\noindent \emph{Step 1: We prove that, if 
$T>0$ and
$u\in L^{\infty}((0,T),\R)$ satisfy
$4 T \|u\|_{L^1(0,T)} < 1$, then, for every $t \in [0,T]$,
\begin{equation} \label{psi(t)_explicit}
\psi(t;u,.)= e^{ia(t)|x|^2} D_{\frac{1}{b(t)}} e^{i \zeta(t) \Delta} 
\end{equation}
where $a$ is the solution of 
\begin{equation}  \label{eq:a}
\left\lbrace \begin{array}{l}
\dot{a}(t) = - 4 a(t)^2  - u(t), \\
a(0)=0.
\end{array}\right.
\end{equation}
and $b, \zeta$ are given by (\ref{def:b_zeta}).}  
It is sufficient to prove that the maximal solution of the nonlinear ODE (\ref{eq:a}) is defined on $[0,T]$, because then, standard computations prove that the right hand side of (\ref{psi(t)_explicit}) solves (\ref{eq:trap}).

Let $\varepsilon:=-4T$, 
$\underline{u}: s \in [0,1] \mapsto -T u(Ts)$ and 
$\underline{a}:[0,1] \rightarrow \R$ 
given by Lemma \ref{Lem:EDO}. Then 
$a:t \in [0,T] \mapsto \underline{a}(t/T)$ 
solves (\ref{eq:a}) on $[0,T]$.

\medskip

\noindent \emph{Step 2: We prove (\ref{Prod_gene}).} Let $T>0$ and $u \in L^{\infty}((0,T),\R)$. There exists $n \in \N^*$ and a subdivision $T_0=0<T_1<\dots<T_n=T$ such that 
$4(T_j-T_{j-1}) \|u\|_{L^1(T_{j-1},T_j)} < 1$ for $j=1,\dots,n$. We apply Step 1 on each intervals $(T_{j-1},T_j)$.

\medskip

\noindent \emph{Step 3: Let $\psi_0 \in \mathcal{S}$. We prove that the exactly reachable set from $\psi_0$ is not dense in $(\mathcal{S},\|.\|_{L^2})$.}

\medskip

\noindent \emph{Case 3.1: $\psi_0 \in \mathcal{G}$.} By (\ref{Prod_gene}) and Lemma \ref{Lem:Gauss},
$\text{Reach}(\psi_0) \subset \mathcal{G}$, 
thus $\text{Reach}(\psi_0)$ is not dense in $(\mathcal{S},\|.\|_{L^2})$.

\medskip

\noindent \emph{Case 3.2: $\psi_0 \notin \mathcal{G}$.} By Lemma \ref{Lem:Gauss}, $d:=\text{dist}_{L^2}( \psi_0, \mathcal{G})>0$ because $\mathcal{G}$ is closed. Let 
 $\psi_f \in \text{Reach}(\psi_0)$ and $g \in \mathcal{G}$.
By (\ref{Prod_gene}), 
$\psi_f=
e^{i \alpha_n |x|^2} D_{\beta_n} e^{i \sigma_n \Delta}
\dots
e^{i \alpha_1 |x|^2} D_{\beta_1} e^{i \sigma_1 \Delta} \psi_0$ 
for some
$n \in \N^*$, $\alpha_j \in \R$, $\beta_j, \sigma_j>0$ and
\begin{align*}
\| \psi_f - g \|_{L^2} 
&  =
\|e^{i \alpha_n |x|^2} D_{\beta_n} e^{i \sigma_n \Delta}
\dots
e^{i \alpha_1 |x|^2} D_{\beta_1} e^{i \sigma_1 \Delta} \psi_0 - g  \|_{L^2}
\\  & =
\| \psi_0 - 
e^{-i \sigma_1 \Delta} D_{\beta_1^{-1}}e^{- i \alpha_1 |x|^2}
\dots 
e^{-i \sigma_n \Delta} D_{\beta_n^{-1}} e^{-i \alpha_n |x|^2}g 
\|_{L^2}  \geq d >0
\end{align*}
because all the operators involved are isometries of $L^2(\R^d,\C)$ and they preserve $\mathcal{G}$. Thus $\text{Reach}(\psi_0)$ is not dense in $(\mathcal{S},\|.\|_{L^2})$.
\end{proof}

\subsection{Absence of approximate controllability}

The goal of this section is to prove the statement 2 of Theorem \ref{Prop:Lim_Ex}. We will use the following result.

\begin{lemma} \label{Lem:Gauss_bis}
Let $\mathcal{G}$ be defined by (\ref{def:Gauss}). The set 
$$\mathcal{G}':= \{ x \in \R^d \mapsto \tau_q e^{i \langle p, x \rangle} g(x); p,q \in \R^d, g \in \mathcal{G} \}$$
is a strict closed subset of $(\mathcal{S},\|.\|_{L^2})$, stable by the operators 
$\tau_q$ for $q\in\R^d,$
$e^{i\langle p , x \rangle}$ for $p \in\R^d$,
$e^{ia|x|^2}$ for $a \in \R$, 
$D_{\beta}$ for $\beta>0$, 
$e^{i\sigma \Delta}$ for $\sigma \in \R$. 
\end{lemma}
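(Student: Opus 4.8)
The plan is to mirror the proof of Lemma \ref{Lem:Gauss}, upgrading the set of Gaussians $\mathcal{G}$ to the larger class $\mathcal{G}'$ obtained by allowing translations in position and in frequency (i.e.\ multiplication by a plane wave $e^{i\langle p,x\rangle}$). Concretely, I would first observe that a function in $\mathcal{G}'$ is, up to a global phase, a Gaussian of the form $x\mapsto C a^{d/4}e^{-(a+ib)|x-q|^2}e^{i\langle p,x\rangle}$ with $a>0$, $b\in\R$, $p,q\in\R^d$; equivalently it is a function whose modulus is a translated isotropic Gaussian $C a^{d/4}e^{-a|x-q|^2}$ and whose phase is a (real) quadratic polynomial in $x$. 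The parametrization $(\theta,a,b,p,q)$ is not injective (the plane-wave factor can be partially absorbed into completing the square in the exponent), but that is harmless: I only need that $\mathcal{G}'$ equals the image of this parameter map.

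Next I would check the two easy assertions. The strict inclusion $\mathcal{G}'\subsetneq\mathcal{S}$ is clear since, e.g., a sum of two well-separated Gaussian bumps has non-Gaussian modulus and hence is not in $\mathcal{G}'$. For stability: $\tau_{q'}$ and $e^{i\langle p',x\rangle}$ preserve $\mathcal{G}'$ essentially by definition (they just shift $q\mapsto q+q'$, $p\mapsto p+p'$, modulo a phase coming from $\tau_{q'}e^{i\langle p,x\rangle}=e^{-i\langle p,q'\rangle}e^{i\langle p,x\rangle}\tau_{q'}$); $e^{ia'|x|^2}$ adds a quadratic phase, which after expanding $|x|^2=|x-q|^2+2\langle q,x\rangle-|q|^2$ again falls into the allowed form (it changes $b$, $p$, and the global phase); $D_\beta$ rescales $a,b,q$ and the plane-wave frequency in the obvious way; and $e^{i\sigma\Delta}$ preserves $\mathcal{G}'$ because the free Schrödinger flow maps a Gaussian wave packet to a Gaussian wave packet — this is the one computation one should either cite (it is the exact-solution formula underlying \cite[Section 4]{carles}, already invoked in this paper) or verify via the stationary-phase/Gaussian-integral identity $e^{i\sigma\Delta}$ acting on $e^{-(a+ib)|x|^2}e^{i\langle p,x\rangle}$, together with the covariance $e^{i\sigma\Delta}\tau_q=\tau_{q}\,e^{i\sigma\Delta}$ up to a plane wave, or more simply $e^{i\sigma\Delta}$ commutes with $\tau_q$ and conjugates $e^{i\langle p,x\rangle}$ into a translation composed with a phase (Galilean invariance). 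So all the generating operators of $\mathcal{G}'$ keep one inside $\mathcal{G}'$.

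The substance of the lemma, as with Lemma \ref{Lem:Gauss}, is closedness in $L^2$. I would take $(f_n)\subset\mathcal{G}'$ with $f_n\to f$ in $L^2$ and, after extraction, a.e.; write $f_n$ with parameters $\theta_n,a_n,b_n,p_n,q_n$. Step 1: reduce to $\theta_n=0$ by passing to a subsequence along which $\theta_n\to\theta$ (the global phase lives on the compact $[0,2\pi]$). Step 2: control $q_n$ — since $\|f_n\|_{L^2}=1$, the measures $|f_n|^2\,dx$ are probability measures that are translates of a fixed-width Gaussian times $a_n^{d/2}$; one shows $q_n$ must stay bounded, for if $|q_n|\to\infty$ then $|f_n|^2$ would escape to infinity and $f_n$ could not converge in $L^2$ to a unit vector (or: test against a fixed compactly supported function). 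After extraction $q_n\to q$. Step 3: control $a_n$ exactly as in Lemma \ref{Lem:Gauss} (Steps 2–3 there) — $a_n$ bounded above because $a_n\to\infty$ forces $|f|=0$ a.e., and bounded below away from $0$ because $|f|$, being the a.e.\ limit of $C a_n^{d/4}e^{-a_n|x-q_n|^2}$, must be a genuine Gaussian of positive width; so $a_n\to a>0$. Step 4: control the phase. The phase of $f_n$ is $\langle p_n,x\rangle-b_n|x-q_n|^2$ plus constants; knowing $q_n\to q$, $a_n\to a$, extract the oscillatory information as in Steps 3–4 of Lemma \ref{Lem:Gauss}: along lines through $q$, $e^{i(\langle p_n,x\rangle-b_n|x-q_n|^2)}$ converges a.e.\ to the phase of $f$, and a Riemann–Lebesgue argument on the quadratic-in-$x$ and linear-in-$x$ pieces forces both $b_n$ and $p_n$ to remain bounded — the quadratic coefficient $b_n$ is pinned down first (as in Lemma \ref{Lem:Gauss}), and then, with $b_n$ bounded, the residual linear oscillation $e^{i\langle p_n,x\rangle}$ must also be non-vanishing in the weak sense, forcing $p_n$ bounded. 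After a final extraction $b_n\to b$, $p_n\to p$, and the a.e.\ limit gives $f(x)=C a^{d/4}e^{-(a+ib)|x-q|^2}e^{i\langle p,x\rangle}$ up to phase, i.e.\ $f\in\mathcal{G}'$.

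I expect the main obstacle to be Step 4, disentangling the two ``translational'' oscillations: a linear phase $e^{i\langle p,x\rangle}$ can be partially absorbed into the quadratic phase $e^{ib|x-q|^2}$ by re-centering, so $b_n$ and $p_n$ are not independently meaningful, and one has to argue in the right order — first extract and bound $b_n$ (using that it is the coefficient of $|x|^2$, which cannot be shifted away), then with $b_n$ under control extract and bound $p_n$. The cleanest way to organize this is to reduce to $\mathcal{G}$ by undoing a translation: since $q_n\to q$, replace $f_n$ by $e^{-i\langle p_n,q\rangle}\tau_{-q_n+q}$-adjusted versions, or more simply note $\tau_{-q_n}f_n$ differs from an element of $\mathcal{G}$ times $e^{i\langle p_n,x\rangle}$ only by bounded-parameter data, and then the plane-wave factor is the sole remaining obstruction, handled by the weak-limit/Riemann–Lebesgue step already present in Lemma \ref{Lem:Gauss}. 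Everything else is a routine transcription of the preceding lemma's proof.
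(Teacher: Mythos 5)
Your proposal is in the right spirit and would very likely succeed, but the crucial Step 4 is not resolved, and in fact the specific order you suggest (``bound $b_n$ first, as in Lemma \ref{Lem:Gauss}, then $p_n$'') does not transfer directly. The step of Lemma \ref{Lem:Gauss} that bounds $b_n$ relies on the phase being radial: the substitution $t=|x|^2$ turns $e^{ib_n|x|^2}$ into $e^{ib_n t}$ on $(0,1)$, and then plain Riemann--Lebesgue applies. Once a plane-wave factor $e^{i\langle p_n,x\rangle}$ is present, the phase restricted to a radial ray $x=\sqrt{t}\,e_1$ becomes $-b_n t+p_{n,1}\sqrt{t}$, which is neither purely linear nor purely quadratic in $t$, and for a sequence with $|b_n|\to\infty$ and $|p_{n,1}|\to\infty$ simultaneously there can be a stationary point wandering inside $(0,1)$. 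One can still conclude with a joint van der Corput--type estimate, but this is a genuinely new argument, not a transcription of Lemma \ref{Lem:Gauss}; your own closing paragraph flags this coupling as ``the main obstacle'' without resolving it.

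The paper takes a cleaner route that sidesteps the coupling entirely and also avoids re-proving the bounds on $a_n$, $b_n$, $\theta_n$. Step 1 bounds $q_n$ essentially as you propose: if $|q_n|\to\infty$ then $\tau_{-q_n}|f|\rightharpoonup 0$ forces $|g_n|\rightharpoonup 0$, hence $a_n\to 0$ or $a_n\to\infty$, hence $|f_n|\to 0$ a.e., a contradiction. Then, after reducing to $q_n=0$, the paper applies the Fourier transform: with $q_n=0$ one has $\hat f_n=\tau_{p_n}\hat g_n$, and $\hat g_n$ is again a centered Gaussian, so the \emph{same} Step 1 argument (now applied in Fourier space, using $\|\hat f_n-\hat f\|_{L^2}\to 0$) bounds $p_n$. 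After reducing to $p_n=0$ one is left with $f_n=g_n\in\mathcal G$, and Lemma \ref{Lem:Gauss} is quoted verbatim for the parameters $\theta_n,a_n,b_n$. The Fourier transform trick is what lets the translation argument do double duty for both $q_n$ and $p_n$, and it is the one idea missing from your plan. The stability and strict-inclusion parts of your proposal are fine and match the paper.
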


\begin{proof}
The strict inclusion and the stability properties are simple, thus we only prove the closedness. Let $(f_n)_{n\in\N}$ be a sequence of $\mathcal{G}'$ and $f \in \mathcal{S}$ such that $\|f_n-f\|_{L^2} \to 0$ as $n \to \infty$. Let $p_n, q_n, g_n$ be the associated parameters. One may assume that $f_n \to f$ a.e. (otherwise take an extraction).

\medskip

\noindent \emph{Step 1: We prove that $(q_n)_{n\in\N}$ is bounded.}
Reasoning by contradiction, we assume there exists an extraction $\varphi$ such that $|q_{\varphi(n)}| \to \infty$. We have 
$$
\|\, |g_n| - \tau_{-q_n} |f|\, \|_{L^2}
= \|\, \tau_{q_n} |g_n| - |f|\, \|_{L^2}
= \|\, |f_n| - |f|\, \|_{L^2} 
\leq \|f_n-f\|_{L^2} 
\underset{n \to \infty}{\longrightarrow} 0
$$
and
$\tau_{-q_{\varphi(n)}} |f| \rightharpoonup 0$ weakly in $L^2(\R^d)$,
thus $|g_{\varphi(n)}| \rightharpoonup 0$ weakly in $L^2(\R)$. By definition of $\mathcal{G}$, there exists $a_n>0$ such that
$|g_n(x)|=C a_n^{d/4} e^{-a_n|x|^2}$. Thus (up to an extraction), either  $a_{\varphi(n)} \to \infty$ or $a_{\varphi(n)} \to 0$. In any case, $|f_{\varphi(n)}(x)|=C a_{\varphi(n)}^{d/4} e^{-a_{\varphi(n)}|x-q_n|^2} \to 0$ a.e., which is a contradiction.

\medskip

\noindent \emph{Step 2: We prove that $(p_n)_{n\in\N}$ is bounded.} By Step 1, one may assume that $q_n \to q$ for some $q \in \R^d$ (otherwise take an extraction). Then 
$\tau_{-q_n} f_n \in \mathcal{G'}$,
$\tau_{-q} f \in \mathcal{S}$ and
$\| \tau_{-q_n} f_n - \tau_{-q} f \|_{L^2}$ thus, one may assume that $q_n=0$, i.e. $f_n(x)=e^{i \langle p_n , x \rangle} g_n(x)$.
Then $\hat{f}_n=\tau_{p_n} \hat{g}_n$ and
$\| \hat{f}_n - \hat{f} \|_{L^2} \to 0$. Thus, the argument of Step 1 proves that $(p_n)_{n\in\N}$ is bounded.

\medskip

\noindent \emph{Step 3: We prove that $f \in \mathcal{G}$.}
By Step 2, one may assume that $p_n \to p$ for some $p \in \R^d$. Then
$e^{-i\langle p_n,x\rangle} f_n \in \mathcal{G}'$,
$e^{-i\langle p , x \rangle} f \in \mathcal{S}$ and
$\|e^{-i\langle p_n,x\rangle} f_n-e^{-i\langle p , x \rangle} f\|_{L^2} \to 0$ thus we may assume that $p_n=0$.i.e. $f_n=g_n$.
By Lemma \ref{Lem:Gauss}, $f \in \mathcal{G}$, which ends the proof.
\end{proof}

\begin{proof}[Proof of Theorem \ref{Prop:Lim_Ex}, Statement 2:]
It is completely analogous to the step 3 in the proof of Proposition \ref{Prop:SL2} (we replace \eqref{Prod_gene} with \eqref{Prod_gene_0} and Lemma \ref{Lem:Gauss} with Lemma \ref{Lem:Gauss_bis}).




\end{proof}

\subsection{Approximately reachable set}

Statement 3 of Theorem \ref{Prop:Lim_Ex} is a consequence of (\ref{reduc}) and the following result.

\begin{proposition}\label{Prop:SL2_BIS}
System (\ref{eq:trap}) satisfies the following property:
for every $T>0$, $u \in L^\infty(0,T)$ and $\psi_0 \in L^2(\R^d,\C)$,
\begin{equation} \label{3param}
\psi(T;u,\psi_0) \in \text{Adh}_{L^2} \{  e^{i \alpha |x|^2} D_{\beta} e^{i \sigma \Delta} \psi_0 ; \alpha,\beta,\sigma \in \R \}. 
\end{equation}  
\end{proposition}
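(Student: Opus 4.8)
The plan is to reduce the statement to a composition identity inside a representation of $SL(2,\R)$ (its metaplectic cover, acting on $L^2(\R^d)$) and then exploit that the ``big Bruhat cell'' is dense. Set $\mathcal F:=\{e^{i\alpha|x|^2}D_\beta e^{i\sigma\Delta}\ ;\ \alpha\in\R,\ \beta\in\R^*,\ \sigma\in\R\}\subset\mathcal L(L^2(\R^d,\C))$, so that the right-hand side of \eqref{3param} is exactly $\overline{\mathcal F\psi_0}^{L^2}$. By Proposition \ref{Prop:SL2}, $\psi(T;u,.)=g_n g_{n-1}\cdots g_1$ for some $n\in\N^*$ and $g_1,\dots,g_n\in\mathcal F$ (one may even take all dilation and time parameters positive). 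Hence it suffices to prove $g_n\cdots g_1\psi_0\in\overline{\mathcal F\psi_0}^{L^2}$ for every $n$ and every $g_1,\dots,g_n\in\mathcal F$, which I would do by induction on $n$ (the case $n=1$ being trivial).

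The core of the argument is the composition law of the three one-parameter families. The ``diagonal'' relations are immediate: $e^{i\alpha_1|x|^2}e^{i\alpha_2|x|^2}=e^{i(\alpha_1+\alpha_2)|x|^2}$, $e^{i\sigma_1\Delta}e^{i\sigma_2\Delta}=e^{i(\sigma_1+\sigma_2)\Delta}$, $D_{\beta_1}D_{\beta_2}=D_{\beta_1\beta_2}$, together with the conjugation rules $D_\beta e^{i\sigma\Delta}=e^{i\sigma\beta^{-2}\Delta}D_\beta$ and $D_\beta e^{i\alpha|x|^2}=e^{i\alpha\beta^{2}|x|^2}D_\beta$, valid for every $\beta\in\R^*$ (the parity implicit in a negative dilation is harmless here, $\Delta$ and $|x|^2$ being even). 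The crucial ingredient is the \emph{exchange relation} moving a $\Delta$-exponential past an $|x|^2$-exponential: for $1+4\sigma\alpha\neq0$,
\[ e^{i\sigma\Delta}\,e^{i\alpha|x|^2}=e^{\,i\frac{\alpha}{1+4\sigma\alpha}|x|^2}\;D_{\frac{1}{1+4\sigma\alpha}}\;e^{\,i\frac{\sigma}{1+4\sigma\alpha}\Delta}. \]
I would prove this by completing the square in the Fresnel kernel $(4\pi i\sigma)^{-d/2}e^{i|x-y|^2/4\sigma}$ of $e^{i\sigma\Delta}$; to pin down the multiplicative constant it suffices to test the identity on the total family $\{e^{-|x-q|^2}\}_{q\in\R^d}$ of $L^2(\R^d)$, using $e^{i\sigma\Delta}e^{-z|x|^2}=(1+4i\sigma z)^{-d/2}e^{-\frac{z}{1+4i\sigma z}|x|^2}$.

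Granting the exchange relation, one checks $\mathcal F\cdot\mathcal F\subset\overline{\mathcal F}$ for the strong operator topology: given $g_2g_1$ with $g_j=e^{i\alpha_j|x|^2}D_{\beta_j}e^{i\sigma_j\Delta}$, apply the exchange relation to the middle block $e^{i\sigma_2\Delta}e^{i\alpha_1|x|^2}$ — legitimate when $1+4\sigma_2\alpha_1\neq0$, and otherwise after replacing $\alpha_1$ by $\alpha_1+\epsilon$ for small $\epsilon\neq0$, which perturbs $g_1\psi_0$ by an arbitrarily small amount in $L^2$ by continuity — then collect all dilations via the conjugation rules and merge the two $|x|^2$- and the two $\Delta$-exponentials; the outcome is a single element of $\mathcal F$. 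Since every $g\in\mathcal F$ is unitary, left multiplication by $g$ is strongly continuous, and the induction closes: if $g_{n-1}\cdots g_1\psi_0=\lim_k h_k\psi_0$ with $h_k\in\mathcal F$, then $g_n g_{n-1}\cdots g_1\psi_0=\lim_k g_n h_k\psi_0$ with $g_n h_k\in\mathcal F\cdot\mathcal F\subset\overline{\mathcal F}$, so $g_n h_k\psi_0\in\overline{\mathcal F\psi_0}^{L^2}$ and therefore so is its limit. Combined with the reduction of the first paragraph, this yields the Proposition.

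The point I expect to require the most care is the exact value of the constant in the exchange relation when $\tfrac{1}{1+4\sigma\alpha}<0$, i.e.\ when the merged dilation is negative and carries the parity operator: there the square-root branch in the Gaussian normalization produces a factor of type $(\pm i)^{d}$, which has to be reconciled with the statement — morally it is the price of replacing $\beta>0$ by $\beta\in\R$, and is the analogue of the implicit global phase already visible in Theorem \ref{Prop:Lim_Ex}(3). A second, minor, technical point is the degenerate case $1+4\sigma\alpha=0$: the formula breaks down ($\beta\to\pm\infty$), but $e^{i\sigma\Delta}e^{i\alpha|x|^2}$ remains a strong limit of elements of $\mathcal F$ — concretely, of the propagators $\psi(t;u,.)$ of \eqref{eq:trap} as $t$ tends to the blow-up time of the Riccati equation \eqref{eq:a} — hence lies in $\overline{\mathcal F}$, which is all the induction needs.
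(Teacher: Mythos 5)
Your proposal is correct and follows essentially the same route as the paper's own proof: the core ingredient is the exchange relation $e^{is\Delta}e^{ia|x|^2}=e^{i\frac{a}{1+4as}|x|^2}D_{\frac{1}{1+4as}}e^{i\frac{s}{1+4as}\Delta}$, which is exactly Lemma~\ref{Lem:com}, and the proof merges the product of Proposition~\ref{Prop:SL2} into a single block by induction, handling the degenerate case $1+4\sigma_2\alpha_1=0$ by a small perturbation (you perturb $\alpha_1$; the paper instead perturbs $\sigma_2$ by appending a short free evolution $e^{i\delta\Delta}$, using the monotonicity of $\zeta$). Your parenthetical worry about a possible global phase $i^{\pm d}$ when the merged dilation is negative is in fact well placed and worth resolving with care: the paper's proof of Lemma~\ref{Lem:com} applies the identity \eqref{exp(isDelta)} with $t=s/(1+4as)$ in place of $s$ without discussing the branch when $t$ and $s$ have opposite signs, so the same branch bookkeeping is implicit there too, and the observation does not constitute a gap of your argument relative to the paper's.
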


To prove Proposition \ref{Prop:SL2_BIS}, we will use the following commutation argument.

\begin{lemma} \label{Lem:com}
    Let $s \in \R^*$, $a \in \R$ such that $4 a s \neq -1$. Then
    $$e^{i s \Delta} e^{i a |.|^2} = e^{i \frac{a}{1+4as}|.|^2} D_{\frac{1}{1+4as}} e^{i\frac{s}{1+4as} \Delta}. $$
\end{lemma}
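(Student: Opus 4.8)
The plan is to verify the identity by applying both sides to an arbitrary $\psi_0 \in L^2(\R^d,\C)$ and checking that the resulting functions agree; equivalently, one can check the identity on the dense domain $C_c^\infty(\R^d,\C)$ (or on Schwartz functions) and then extend by density, since all operators involved are bounded (indeed isometries) on $L^2$. First I would recall the explicit formula for the free Schrödinger propagator $e^{is\Delta}$, namely the convolution with a complex Gaussian kernel, so that $e^{is\Delta}$, $e^{ia|\cdot|^2}$, and the dilation $D_\beta$ all act fairly explicitly on Gaussians. The cleanest route is to test the identity on the family of Gaussian states $g_{c}(x) = e^{-c|x|^2}$ with $\mathrm{Re}(c) > 0$: both sides map such a Gaussian to another Gaussian, the action of each elementary operator on the parameter $c$ is an explicit Möbius-type transformation, and matching the resulting parameters (plus the scalar prefactors, which track the $L^2$-normalization) gives the relations $a \mapsto a/(1+4as)$, $\beta = 1/(1+4as)$, $s \mapsto s/(1+4as)$ appearing in the statement. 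Since finite linear combinations of such Gaussians are dense in $L^2(\R^d,\C)$ and all operators are bounded, agreement on this family forces the operator identity.

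Alternatively — and this is perhaps the conceptually shortest argument — one can derive the identity from the representation formula already established in the paper. Indeed, Step 1 of the proof of Proposition \ref{Prop:SL2} shows (via the Riccati ODE \eqref{eq:a} with $b,\zeta$ given by \eqref{def:b_zeta}) that for a constant control $u \equiv -4a$ on a short time interval of length $s$ one gets, at least for $|as|$ small, an identity of exactly the shape $e^{is(\Delta + \text{lower order})} = e^{i\tilde a |x|^2} D_{1/\tilde b} e^{i\tilde\zeta \Delta}$; solving the scalar ODEs $\dot a = -4a^2 - u$, $\dot b = 4ab$, $\dot\zeta = 1/b^2$ explicitly with the constant right-hand side and initial data adjusted to produce $e^{ia|\cdot|^2}$ on the right of the product yields precisely Lemma \ref{Lem:com}. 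I would present the direct Gaussian computation as the main line, since it makes the constraint $4as \neq -1$ transparent (it is exactly the condition that the Möbius transformation $a \mapsto a/(1+4as)$ and the dilation factor $1/(1+4as)$ are well defined and nonzero) and the hypothesis $s \neq 0$ ensures $e^{is\Delta}$ is a genuine Gaussian convolution rather than the identity.

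The main obstacle is bookkeeping rather than conceptual: one must carefully track the complex Gaussian parameters and, crucially, the scalar normalizing prefactors through the convolution defining $e^{is\Delta}$ (which in dimension $d$ carries a factor like $(4\pi i s)^{-d/2}$ and a branch-of-the-square-root issue) so that the dilation factor comes out as the stated $1/(1+4as)$ with the correct sign and the prefactors on the two sides match exactly. I would handle this by doing the one-dimensional computation in full and then noting the $d$-dimensional case factorizes, and by only ever comparing the two sides as $L^2$-normalized states, so that the prefactors are pinned down without needing to resolve the branch of the square root independently. Once the parameters are matched on Gaussians, no further analysis is needed: density of Gaussian combinations in $L^2$ and boundedness of all operators complete the proof.
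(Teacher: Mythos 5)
Your main route — verifying the identity on Gaussians and extending by density — is genuinely different from the paper's argument. The paper's proof uses the explicit operator factorization
$$e^{is\Delta} \;=\; \frac{1}{(2i\pi)^{d/2}}\, e^{i\frac{|\cdot|^2}{4s}}\, D_{\frac{1}{2s}}\,\mathcal{F}\, e^{i\frac{|\cdot|^2}{4s}},$$
absorbs the phase $e^{ia|\cdot|^2}$ into the rightmost factor (turning $\tfrac{1}{4s}$ into $\tfrac{1}{4t}$ with $t=\tfrac{s}{1+4as}$), re-folds this factorization to expose $e^{it\Delta}$, and commutes the remaining quadratic phase past the dilation — a short exact operator identity, with no density or approximation step. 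Your second route, via the Riccati system \eqref{eq:abxi}, is also sound and arguably more in the spirit of this paper: take $u\equiv 0$ on $[0,s]$ with initial data $(a,b,\zeta)(0)=(a,1,0)$ in place of $(0,1,0)$; then $\dot a = -4a^2$, $\dot b=4ab$, $\dot\zeta=1/b^2$ solve to $a(s)=\tfrac{a}{1+4as}$, $b(s)=1+4as$, $\zeta(s)=\tfrac{s}{1+4as}$, and the representation formula \eqref{explicit} (shifted to the new initial data) yields exactly the claimed identity. This makes transparent the link between Lemma \ref{Lem:com} and Step 1 of Proposition \ref{Prop:SL2}, which the paper's Fourier computation leaves implicit.

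However, your primary (Gaussian-testing) route has a gap as written: the span of centered Gaussians $\{e^{-c|x|^2} : \Re(c)>0\}$ is \emph{not} dense in $L^2(\R^d,\C)$. Already in $d=1$ every such function is even, so the closed span lies in the even subspace; in general, finite linear combinations of centered Gaussians are radial. All four operators in the statement preserve these invariant subspaces, so agreement on centered Gaussians only verifies the identity on a proper closed invariant subspace and says nothing about its orthogonal complement. To repair this, test instead on Gaussians with a linear term $e^{-c|x|^2 + b\cdot x}$ with $b\in\C^d$ (equivalently, translated Gaussians): their finite linear combinations \emph{are} dense in $L^2(\R^d,\C)$, and each of $e^{is\Delta}$, $e^{ia|\cdot|^2}$, $D_\beta$ maps such a function to one of the same form, so the M\"obius-type bookkeeping goes through with one extra parameter to track. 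Alternatively, your Riccati route avoids the density issue altogether and is the cleaner of your two alternatives.
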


\begin{proof}[Proof of Lemma \ref{Lem:com}:] First, we recall that
\begin{equation} \label{exp(isDelta)}
e^{i s \Delta} = \frac{1}{(2i\pi)^{d/2}} e^{i \frac{|.|^2}{4s}} D_{\frac{1}{2s}} \mathcal{F} e^{i \frac{|.|^2}{4s}} 
\end{equation}
where $\mathcal{F}$ is the Fourier transform, with the following normalization
$$\forall f \in L^1(\R^d), \quad \mathcal{F}(f)(\xi)=\int_{\R^d} f(x) e^{-ix.\xi} dx. $$
The formula (\ref{exp(isDelta)}) is derived from the representation via a convolution product with the fundamental solution (see, e.g., \cite[eq. (7.43)]{Teschl-MMQM-2014}).
Let $t=\frac{s}{1+4as}$. We deduce from (\ref{exp(isDelta)}) that
$$
e^{i s \Delta} e^{i a |.|^2} 
= \frac{1}{(2\pi)^{d/2}}
e^{i \frac{|.|^2}{4s}} D_{\frac{1}{2s}} \mathcal{F} e^{i \frac{|.|^2}{4t}} 
=
e^{i \frac{|.|^2}{4s}} D_{\frac{t}{s}} 
e^{-i \frac{|.|^2}{4t}}
e^{i t \Delta}
=
e^{i \frac{|.|^2}{4s}} 
e^{-i \frac{|.|^2}{4t} \frac{t^2}{s^2} }
D_{\frac{t}{s}} 
e^{i t \Delta}
$$
which gives the conclusion.
\end{proof}

\begin{proof}[Proof of Proposition \ref{Prop:SL2_BIS}]
Let $\psi_0 \in L^2(\R^d,\C)$. We prove by induction on $n \in \N^*$ that, for every 
$T>0$ and $u\in L^{\infty}((0,T),\R)$ satisfying
$4 T \|u\|_{L^1(0,T)} < n$, then
$$\psi(T;u,\psi_0)\in
\text{Adh}_{L^2} \{  e^{i \alpha |x|^2} D_{\beta} e^{i \sigma \Delta} \psi_0 ; \alpha,\beta,\sigma \in \R \}.$$

\medskip

\emph{Initialization:} If $n=1$ then the Step 1 in the proof of Proposition \ref{Prop:SL2} gives the conclusion with
$(\alpha,\beta,\sigma)=(a,1/b,\zeta)(T)$.

\medskip

\emph{Heredity:} Let $n \geq 2$. We assume the property proved up to $(n-1)$. Let $T>0$ and $u\in L^{\infty}((0,T),\R)$ satisfying
$4 T \|u\|_{L^1(0,T)} < n$. Let $T_1:= \frac{n-1}{n} T$. Then
$$
4T_1\|u\|_{L^1(0,T_1)} \leq 4 T \frac{n-1}{n} \|u\|_{L^1(0,T)}
< n-1 
\quad 
\text{ and } 
\quad 
4(T-T_1)\|u\|_{L^1(T_1,T)} \leq \frac{4 T}{n} \|u\|_{L^1(0,T)} < 1.
$$
thus we can apply the induction assumption on $(0,T_1)$ and 
Step 1 in the proof of Proposition \ref{Prop:SL2} on $(T_1,T)$.

Let $\epsilon>0$. There exists 
$\alpha_1, \beta_1, \sigma_1, \alpha_2 \in \R$, 
$\beta_2,\sigma_2>0$ such that
\begin{equation} \label{egalité_T1/T}
\psi(T;u,\psi_0)=
e^{i\alpha_2 |x|^2} D_{\beta_2} e^{i \sigma_2 \Delta}
\psi(T_1;u,\psi_0)    
\end{equation}
and
\begin{equation} \label{Estim_T1}
\| \psi(T_1;u,\psi_0) - 
e^{i\alpha_1|x|^2} D_{\beta_1} e^{i \sigma_1 \Delta}\psi_0
\|_{L^2} < \frac{\epsilon}{2}.
\end{equation}

\emph{First case: $4 \sigma_2 \alpha_1 \neq -1$.} We deduce from (\ref{egalité_T1/T}) and (\ref{Estim_T1}) that
$$\| \psi(T;u,\psi_0) - 
e^{i\alpha_2 |x|^2} D_{\beta_2} e^{i \sigma_2 \Delta}
e^{i\alpha_1|x|^2} D_{\beta_1} e^{i \sigma_1 \Delta} \psi_0
\|_{L^2} < \frac{\epsilon}{2}.$$
By Lemma \ref{Lem:com}, there exists $\alpha_3, \beta_3 \in \R$ and $\sigma_3 \in \R^*$ such that
$$ e^{i \sigma_2 \Delta}  
e^{i \alpha_1 |.|^2}  
=e^{i \alpha_3 |.|^2} D_{\beta_3} 
e^{i \sigma_3 \Delta}.$$
Moreover, the invariant rescaling of the Schrödinger equation proves
$$e^{i \sigma_3 \Delta} D_{\beta_1} = 
D_{\beta_1} e^{i \frac{\sigma_3}{\beta_1^2} \Delta},$$ 
thus
$$
e^{i\alpha_2 |x|^2} D_{\beta_2} 
e^{i \sigma_2 \Delta} e^{i\alpha_1|x|^2} 
D_{\beta_1} e^{i \sigma_1 \Delta} 
=
e^{i\alpha_2 |x|^2} D_{\beta_2} 
e^{i \alpha_3 |.|^2} D_{\beta_1 \beta_3} e^{i \sigma_4 \Delta} 
\\ =
e^{i \alpha_4 |.|^2}
D_{\beta_4} 
e^{i \sigma_4 \Delta}
$$
where
$\sigma_4=\frac{\sigma_3}{\beta_1^2} + \sigma_1 $,
$\beta_4=\beta_2 \beta_3 \beta_1 $
and
$\alpha_4= \alpha_2 + \alpha_3\beta_2^2$.

\medskip

\emph{Second case: $4 \sigma_2 \alpha_1 = -1$.} There exists $\delta>0$ such that
$$\| \psi(T;u,\psi_0) - e^{i \delta \Delta} \psi(T;u,\psi_0) 
\|_{L^2} < \frac{\epsilon}{2} \quad 
\text{ and } \quad 
(\delta+T-T_1) \|u\|_{L^1(0,T)}<1.$$
We extend the control $u$ by zero on the interval $(T,T+\delta)$ and we apply Step 1 in the proof of Proposition \ref{Prop:SL2} on the interval $(T_1,T+\delta)$:
there exists $\alpha_2', \beta_2' \in \R$ and  $\sigma_2'>0$ such that 
$$e^{i \delta \Delta} \psi(T;u,\psi_0) = \psi(T+\delta; u ,\psi_0) = 
e^{i\alpha_2' |x|^2} D_{\beta_2'} e^{i \sigma_2' \Delta}
\psi(T_1;u,\psi_0).$$
The explicit expression of $\sigma_2'$ given in Step 1 of the proof of Proposition \ref{Prop:SL2} proves that $\sigma_2' >\sigma_2$ (because $t \mapsto \zeta(t)$ is increasing).
In particular, $4 \alpha_1 \sigma_2' \neq -1$ and the arguments of the previous case provide $\alpha_4, \beta_4, \sigma_4 \in \R$ such that
$$\|e^{i \delta \Delta} \psi(T;u,\psi_0) - 
\phi  \|_{L^2} < \frac{\epsilon}{2}
\quad \text{ where } \quad
\phi := e^{i \alpha_4 |.|^2}
D_{\beta_4} 
e^{i \sigma_4 \Delta} \psi_0.$$
Finally, by the triangle inequality,
$$ \| \psi(T;u,\psi_0) - \phi \|_{L^2}
 \leq 
\| \psi(T;u,\psi_0) - e^{i \delta \Delta} \psi(T;u,\psi_0) 
\|_{L^2} +\|e^{i \delta \Delta} \psi(T;u,\psi_0) - 
\phi  \|_{L^2}
< \epsilon.$$
\end{proof}

\section{The toy model at higher regularity}
\label{sec:TM_reg}

In this section, we prove that small-time approximate controllability of system (\ref{eq:small-time-oscillator}) remains true for stronger topologies.
To this end, we introduce the positive self adjoint operator
\begin{equation} \label{OH}
D(A)=  \{ f \in L^2(\R^d,\C); A f \in L^2(\R^d,\C)\},\, \quad
A = -  \Delta + |x|^2
\end{equation}
and, for $s \in \N$, the normed vector-space
\begin{equation} \label{s-norm}
D(A^s)=\{ f \in L^2(\R^d,\C);  A^s f \in L^2(\R^d,\C) \},
\qquad
\|f\|_{D(A^s)} = \| A^s f \|_{L^2}.
\end{equation}

\begin{theorem}\label{thm:small-time-global_smooth}
Let $s'>s \in \N$.
There exists a dense subset $\mathcal{D}$ of $W^{2s',\infty}(\R^d,\R)$ such that, for every $W_2 \in \mathcal{D}$,
the system (\ref{eq:small-time-oscillator}) is small-time $D(A^s)$-approximately controllable. 
\end{theorem}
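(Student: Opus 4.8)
The plan is to run the proof of Theorem \ref{thm:small-time-global_BIS} (in the case $V=0$) almost verbatim, replacing $L^2(\R^d,\C)$ everywhere by the Hilbert space $D(A^s)$, and to check that each ingredient survives in this stronger topology; the hypothesis $W_2\in W^{2s',\infty}$ with the \emph{strict} gap $s'>s$ will be used precisely to close the functional-analytic limits. First I would record the $D(A^s)$-analogue of Proposition \ref{Prop:Boscain&al}: for $W_2$ in a dense subset $\mathcal{D}$ of $W^{2s',\infty}(\R^d,\R)$ --- the drift $-\Delta+|x|^2=A$ having compact resolvent, and the coupling $W_2$ satisfying generically the relevant non-resonance and non-degeneracy conditions --- the products $e^{i\sigma_k(\Delta-|x|^2+\alpha_kW_2)}\cdots e^{i\sigma_1(\Delta-|x|^2+\alpha_1W_2)}\psi_0$ are dense in $\mathcal{S}\cap D(A^s)$ for the $D(A^s)$-topology; this is the higher-regularity version of the genericity results already invoked for $L^2$ (see \cite{BCCS,MS-generic}). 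Since Lemma \ref{lem:reachable-operators} and its proof are valid for an arbitrary normed subspace of $L^2$, in particular for $D(A^s)$, it then suffices to prove that every $e^{i\sigma(\Delta-|x|^2+\alpha W_2)}$, $\sigma\ge 0$, $\alpha\in\R$, is $D(A^s)$-STAR.

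For $\alpha=0$ I would reuse the explicit representation of Section \ref{Subsec:ExactST}: one has $e^{i\sigma(\Delta-|x|^2)}=e^{-i\sigma A}$, and the smooth control $u\in C^\infty_c((0,T),\R)$ built in the proof of Proposition \ref{Prop_exact} drives $\psi_0$ exactly to $e^{-i\sigma A}\psi_0$ in arbitrarily small time $T$, with $u_2\equiv 0$ and $\psi(t)=e^{ia(t)|x|^2}D_{1/b(t)}e^{-i\zeta(t)A}\psi_0$. Each of these three factors is a metaplectic operator, hence bounded on every Shubin--Sobolev space $D(A^s)$ with operator norm depending continuously on the locally bounded parameters $a(t),b(t),\zeta(t)$; therefore $t\mapsto\psi(t)$ lies in $C^0([0,T],D(A^s))$ whenever $\psi_0\in D(A^s)$, so $u$ is admissible for the state space $D(A^s)$ and $e^{-i\sigma A}$ is small-time $D(A^s)$-exactly reachable.

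To pass to $\alpha\ne 0$ I would mimic the deduction of Proposition \ref{Prop:Approx_BIS} from Proposition \ref{Prop:Approx_BIS_alpha=0}: first $e^{i\frac{\sigma\alpha}{n}W_2}$ is obtained as the small-time ($\tau\to0$) limit of the propagators $e^{i\tau(\Delta+\frac{\sigma\alpha}{\tau n}W_2)}$, each exactly reachable in time $\tau$; then $e^{i\sigma(\Delta-|x|^2+\alpha W_2)}$ is obtained as the $n\to\infty$ limit, by the Trotter--Kato product formula, of the operators $(e^{-i\frac{\sigma}{n}A}e^{i\frac{\alpha\sigma}{n}W_2})^n$, which are $D(A^s)$-STAR by the previous steps; Lemma \ref{lem:reachable-operators}(2) then transfers the $D(A^s)$-STAR property along both limits. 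The point that requires work is that Propositions \ref{prop:trotter} and \ref{prop:trotter-kato}, stated on the Hilbert space carrying the self-adjoint structure of the operators, must now be applied in the $D(A^s)$-topology, in which $\Delta-|x|^2$ and $W_2$ are not jointly self-adjoint --- multiplication by $W_2$ is bounded but not symmetric there --- so the abstract theorems do not apply directly.

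This is the main obstacle, and I would circumvent it by interpolation. By Leibniz' rule and the Shubin characterization $D(A^k)=\{f:\ x^\beta\partial^\gamma f\in L^2 \text{ for }|\beta|+|\gamma|\le 2k\}$, multiplication by $W_2$ is bounded on $D(A^s)$ and on $D(A^{s'})$; hence the whole family of propagators above --- compositions of the isometries $e^{-i\sigma A/n}$, of the metaplectic factors, and of the $W_2$-propagators --- is bounded on $D(A^{s'})$ with operator norms bounded uniformly in $\tau\in(0,1)$ and $n\in\N$, since each factor is bounded on $D(A^{s'})$ with norm controlled exponentially by its time parameter and the time parameters add up to uniformly bounded quantities. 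Since $s'>s$, each $L^2$-strong convergence furnished by Propositions \ref{prop:trotter} and \ref{prop:trotter-kato} can then be upgraded to a $D(A^s)$-strong convergence by combining these uniform $D(A^{s'})$-bounds with the interpolation inequality $\|\varphi\|_{D(A^s)}\lesssim\|\varphi\|_{L^2}^{1-s/s'}\|\varphi\|_{D(A^{s'})}^{s/s'}$ together with a density argument; this is the only place where the strict inequality $s'>s$ and the $W^{2s',\infty}$-regularity of $W_2$ are genuinely used. The remaining verifications --- strong continuity on $D(A^s)$ of the metaplectic groups and of $t\mapsto e^{it(-\Delta+cW_2)}$, and the $D(A^s)$-well-posedness statement replacing Proposition \ref{Prop:WP} --- are routine bounded-perturbation arguments.
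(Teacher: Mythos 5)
Your proof is essentially correct and follows the same route as the paper: you prove a $D(A^s)$-version of the large-time result, reduce small-time controllability to showing $e^{i\sigma(\Delta-|x|^2+\alpha W_2)}$ is $D(A^s)$-STAR, and close the Trotter-Kato limit not by proving a $D(A^s)$-Trotter theorem (which, as you note, is blocked by the loss of self-adjointness of $W_2$ on $D(A^s)$), but by combining the existing $L^2$-strong convergence with a uniform $D(A^{s'})$-bound and the interpolation inequality between $L^2$ and $D(A^{s'})$. This is exactly the content of the paper's abstract Proposition \ref{prop:trotter-kato_smooth}, and the uniform $D(A^{s'})$-bounds you invoke are the content of Proposition \ref{Prop:WP_D(As)}.

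The one genuine point of divergence is the choice of Trotter-Kato factors. You use the direct analogue of the $L^2$ proof, writing
\begin{equation*}
L_n=\left(e^{i\frac{\sigma}{n}(\Delta-|x|^2)}\,e^{i\frac{\alpha\sigma}{n}W_2}\right)^n,
\end{equation*}
which forces you to first show that the pure multiplication operator $e^{i c W_2}$ is $D(A^s)$-STAR (as a $\tau\to 0$ limit of $e^{i\tau(\Delta+\frac{c}{\tau}W_2)}$) and to establish that multiplication by $e^{icW_2}$ is bounded on the Shubin scale with norms of the form $e^{O(|c|)}$, so that the product of $n$ factors stays uniformly bounded. This works, but it needs the extra Leibniz-type estimates you mention, and the small-time reachability of $e^{i c W_2}$ is an additional limit where the interpolation machinery must be rerun. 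The paper avoids this by choosing the factors
\begin{equation*}
L_n=\left(e^{i\frac{\sigma'}{n}(\Delta-|x|^2)}\,e^{i\frac{\varepsilon}{n}(\Delta-|x|^2+\frac{\alpha\sigma}{\varepsilon}W_2)}\right)^n,\qquad \sigma'=\sigma-\varepsilon,
\end{equation*}
so that \emph{both} factors are genuine Schrödinger propagators of the system, each exactly reachable with a constant control, and each covered directly by the well-posedness bound of Proposition \ref{Prop:WP_D(As)} without any auxiliary multiplication-operator estimate. The price is that the operator is reached in time $\varepsilon^+$ rather than $0^+$, but since $\varepsilon>0$ is arbitrary this is immaterial. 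Your version requires one more limiting argument but is otherwise equivalent; the paper's is slightly more economical and keeps all uniform bounds within the already-proved well-posedness estimate.

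One detail worth flagging in your write-up: for the intermediate claim that $e^{icW_2}$ is $D(A^s)$-STAR, the $D(A^{s'})$-bound on $e^{i\tau(\Delta+\frac{c}{\tau}W_2)}$ uniform as $\tau\to 0$ must be checked carefully, since the control magnitude $|u_2|=|c|/\tau$ diverges. It does hold, because the Gronwall constant $C(u)$ in Proposition \ref{Prop:WP_D(As)} is linear in $|u_2|$, so $C(u)\tau\sim |c|$ stays bounded; but this is exactly the kind of computation the paper's choice of decomposition sidesteps.
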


\subsection{Well posedness}

\begin{proposition} \label{Prop:WP_D(As)}
Let 
$s \in \N^*$, 
$W_2 \in W^{2s,\infty}(\R^d,\R)$,
$T>0$, 
$(u_1, u_2) \in \mathcal{U}(0,T)$
(see Definition \ref{Def:Adm_cont})
and 
$\psi_0 \in D(A^s)$. 
The Cauchy problem (\ref{eq:small-time-oscillator}) has a solution 
$\psi \in C^0([0,T],D(A^s))$. Moreover, there exists $C=C(s,W_2,T,u)>0$ such that, for every $t \in [0,T]$,
$\| \psi(t;u,.) \|_{\mathcal{L}(D(A^s))} \leq e^{Ct}$.
\end{proposition}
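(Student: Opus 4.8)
The plan is to establish the propagation of $D(A^s)$-regularity by a standard energy-type argument applied to the commutator of the Hamiltonian with $A^s$, reducing everything to the case of a time-independent Hamiltonian on each interval where $(u_1,u_2)$ is constant (using the structure of $\mathcal{U}(0,T)$ from Definition \ref{Def:Adm_cont}), then concatenating. On such an interval the generator is $H=-\Delta+u_1|x|^2+u_2 W_2=A-(1-u_1)|x|^2+u_2 W_2$ (taking $\Delta=-(A-|x|^2)$), so $H$ differs from $A$ by the multiplication operator $P:=-(1-u_1)|x|^2+u_2 W_2$, which is $A$-bounded since $|x|^2\le A$ in the form sense and $W_2\in W^{2s,\infty}\subset L^\infty$; Kato--Rellich (Proposition \ref{prop:kato-rellich}) then gives self-adjointness of $H$ on $D(A)$, so that $e^{-itH}$ is a unitary group and the $L^2$-well-posedness of Proposition \ref{Prop:WP} is recovered.

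The core is the a priori estimate. First I would work at the top level $s=1$: for $\psi_0\in D(A)$ set $\psi(t)=e^{-itH}\psi_0$ and $f(t)=\|A\psi(t)\|_{L^2}^2$. Formally
$$\frac{d}{dt}f(t)=2\,{\rm Re}\,\langle A\dot\psi,A\psi\rangle=2\,{\rm Re}\,\langle -iAH\psi,A\psi\rangle=2\,{\rm Im}\,\langle [A,H]\psi,A\psi\rangle,$$
and since $[A,H]=[A,P]=[-\Delta+|x|^2,P]=[-\Delta,P]$ is a first-order differential operator with coefficients controlled by $\nabla P$ and $\Delta P$ (bounded, using $u_1$ bounded and $W_2\in W^{2,\infty}$, at least away from $x=0$; near the origin $|x|^2$ contributes only bounded derivatives), one bounds $\|[A,H]\psi\|_{L^2}\le C(\|A^{1/2}\psi\|_{L^2}+\|\psi\|_{L^2})\le C(f(t)^{1/2}+1)$ via interpolation $\|A^{1/2}\psi\|\le \|A\psi\|^{1/2}\|\psi\|^{1/2}$, hence $f'(t)\le C(f(t)+1)$ and Grönwall gives $\|A\psi(t)\|_{L^2}\le e^{Ct}\|A\psi_0\|_{L^2}$ after absorbing the $\|\psi_0\|_{L^2}=\|\psi_0\|$ term. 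For general $s$ one iterates: $\frac{d}{dt}\|A^s\psi\|^2=2\,{\rm Im}\,\langle [A^s,H]\psi,A^s\psi\rangle$, expand $[A^s,H]=\sum_{k=0}^{s-1}A^k[A,H]A^{s-1-k}$, and commute $[A,H]$ (order one) past powers of $A$ to express $[A^s,H]$ as a differential operator of order $2s-1$ whose $L^2\to L^2$ norm composed with $A^{-(s-1/2)}$ is bounded — here is exactly where the hypothesis $W_2\in W^{2s,\infty}$ is needed, to control the up-to-$2s$-th derivatives of $W_2$ appearing in the nested commutators. This yields $\frac{d}{dt}\|A^s\psi\|^2\le C(\|A^s\psi\|^2+\|\psi\|^2)$ and Grönwall again. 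Continuity $\psi\in C^0([0,T],D(A^s))$ follows from strong continuity of the group, density of $D(A^\infty)$, and the uniform bound just derived; $C^\infty_c$ or the Hermite functions serve as the regularizing/approximating class on which all the formal computations are justified rigorously.

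The main obstacle is making the commutator bookkeeping clean: tracking that $[A^s,H]$ is genuinely of order $2s-1$ with coefficients that are polynomially bounded times derivatives of $W_2$ and of $|x|^2$, and that these coefficients times $A^{-(s-1/2)}$ define bounded operators — this requires knowing mapping properties of weighted differential operators relative to the Hermite operator $A$ (essentially that $x^\alpha\partial^\beta$ with $|\alpha|+|\beta|\le 2r$ is $A^r$-bounded), which is classical but must be invoked carefully. A secondary point is the low regularity of $V$; but here $V=0$ (we are in the toy model (\ref{eq:small-time-oscillator})), so that difficulty does not arise, and the only mild care needed is that $u_1$ is merely bounded measurable on $\{u_2=0\}$ — on those subintervals the Hamiltonian is $-\Delta+u_1(t)|x|^2$, time-dependent, and one invokes the Fujiwara propagator of \cite{fujiwara} (as in Step 2 of the proof of Proposition \ref{Prop:WP}) together with the same commutator estimate applied to the time-dependent generator, the coefficients of $[A^s,-\Delta+u_1(t)|x|^2]$ being uniformly bounded in $t$ since $u_1$ is.
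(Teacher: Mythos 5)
Your approach coincides with the paper's: reduce to constant-control subintervals, run an energy estimate on $\|A^s\psi(t)\|_{L^2}^2$ via the commutator $[A^s,H]$, Gr\"onwall, and concatenate. The paper organizes the commutator bookkeeping exactly as you sketch: $[A,|x|^2+W_2]=-2\langle 2x+\nabla W_2,\nabla\rangle-(2d+\Delta W_2)$, then $[A^s,|x|^2+W_2]=\sum_{k=0}^{s-1}A^k[A,|x|^2+W_2]A^{s-1-k}$, using the norm equivalence $\|f\|_{D(A^r)}\sim\sum_{|\alpha|+|\beta|\le 2r}\|x^\alpha\partial_x^\beta f\|_{L^2}$ to see that $[A,|x|^2+W_2]$ maps $D(A^{k+1})$ boundedly to $D(A^k)$.

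There is one real (though fixable) error in your argument at $s=1$. You claim $\|[A,H]\psi\|_{L^2}\le C(\|A^{1/2}\psi\|_{L^2}+\|\psi\|_{L^2})$ on the grounds that the coefficients of the first-order operator $[A,H]=[-\Delta,P]=-\Delta P-2\nabla P\cdot\nabla$ are bounded, suggesting the only possible issue is near $x=0$. This is backwards: near $x=0$ everything is fine, but $\nabla P=-2(1-u_1)x+u_2\nabla W_2$ grows linearly at infinity (unless $u_1\equiv 1$), so the leading coefficient is \emph{not} bounded and $\|A^{1/2}\psi\|$ does \emph{not} control $\|x\cdot\nabla\psi\|$. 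The correct bound, which is what the paper uses, is $\|[A,H]\psi\|_{L^2}\le C\|A\psi\|_{L^2}$, obtained from the weighted Sobolev norm equivalence above (since $x\cdot\nabla$ involves $x^\alpha\partial^\beta$ with $|\alpha|+|\beta|=2$, it is $A$-bounded, not $A^{1/2}$-bounded). This weaker bound still yields $\frac{d}{dt}\|A\psi\|^2\le C\|A\psi\|^2$ and Gr\"onwall closes, so your proof is repaired by replacing your claimed bound with the $A$-bound; the same correction should be made in your final paragraph where you again assert the coefficients of $[A^s,-\Delta+u_1(t)|x|^2]$ are bounded — they are uniformly bounded in $t$ but polynomially growing in $x$, and it is the weighted-norm structure of $D(A^s)$ that absorbs that growth.
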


\begin{proof} First, we recall, for every $s\in\N$, the equivalence between the following  norms
\begin{equation} \label{norm_eq}
\|f\|_{D(A^{s})} \, \sim \,
\sum_{\substack{
\alpha,\beta \in \N^d, 
\\ 
|\alpha|+|\beta| \leq 2s
}} \| x^{\alpha} \partial_x^{\beta} f \|_{L^2}.
\end{equation}

\noindent \emph{Step 1: We prove that for every $s\in\N^*$ and
$W_2 \in W^{2s,\infty}(\R^d,\R)$, the commutator $[A^s,|x|^2+W_2]$ maps continuously $D(A^s) $ into $L^2(\R^d,\C)$.}
The explicit expression $[A,|x|^2+W_2]=-2 \langle 2x+\nabla W_2,\nabla \rangle - (2d+\Delta W_2) $  and the equivalence (\ref{norm_eq}) prove that $[A,|x|^2+W_2]$ maps continuously $D(A^{k+1})$ into $D(A^k)$, for every $k \leq (s-1)$.
Thus Step 1 is a consequence  of the formula
$$[A^s,|x|^2+W_2] = \sum_{k=0}^{s-1} A^k [A,|x|^2+W_2] A^{s-1-k}.$$

\noindent \emph{Step 2: On a time interval $[t_1,t_2]$ on which $u_1$ and $u_2$ are constant.} Let $v_1:=u_1-1$. Then, for sufficiently regular initial conditions, using Step 1,
$$\begin{array}{ll}
\frac{1}{2} \frac{d}{dt} \| A^s \psi(t) \|_{L^2}^2
& 
= \Re \langle A^s \psi(t) , A^s (-i) (A+v_1|x|^2+u_2 W_2) \psi(t) \rangle \\
& = - \Im \langle A^s \psi(t) , [A^s  , v_1|x|^2+u_2 W_2 ] \psi(t) \rangle
\leq C \| A^s \psi(t) \|_{L^2}^2
\end{array} $$
where $C=C(s,u,W_2)>0$, thus $\|\psi(t)\|_{D(A^s)} \leq \|\psi(0)\|_{D(A^s)} e^{C(t-t_1)}$.

\medskip

\noindent \emph{Step 3: On a time interval $[t_1,t_2]$ on which $u_2=0$.} For sufficiently regular initial conditions
$$\begin{array}{ll}
\frac{1}{2} \frac{d}{dt} \| A^s \psi(t) \|_{L^2}^2
& = -  v_1(t) \Im \langle A^s \psi(t) , [A^s  , |x|^2 ] \psi(t) \rangle
\leq C  \| A^s \psi(t) \|_{L^2}^2
\end{array} $$
where $C=C(s,u_1)>0$, thus $\|\psi(t)\|_{D(A^s)} \leq \|\psi(0)\|_{D(A^s)} e^{C(t-t_1)}$. 

\medskip

The conclusion of Proposition \ref{Prop:WP_D(As)} is obtained by concatenating these propagators.

\end{proof}

\subsection{Large-time approximate controllability}\label{A:Boscain&al}

The large time $L^2$-approximate controllability of system (\ref{eq:small-time-oscillator}) is known to hold for a dense subset of potentials $W_2 \in L^\infty(\R^d,\R)$ (see Proposition \ref{Prop:Boscain&al}). This result can be extended to stronger norms.

\begin{proposition} \label{Prop:Boscain&al_smooth}
Let $s \in \N$ and $s' \geq 2s$. The system (\ref{eq:small-time-oscillator}) is large time $D(A^s)$-approximately controllable, generically with respect to $W_2 \in W^{s',\infty}(\R^d,\R)$. More precisely, there exists a dense subset $\mathcal{D}$ of $W^{s',\infty}(\R^d,\R)$ such that, for every $W_2 \in \mathcal{D}$ and $\psi_0 \in D(A^s) \cap \mathcal{S}$, the set
$$
\{e^{i \sigma_k( \Delta- |x|^2+\alpha_k W_2) } \dots  
e^{i \sigma_1(\Delta-|x|^2+\alpha_1 W_2 )}\psi_0\, ;\, k \in \N^*, \sigma_1,\dots,\sigma_k\geq 0, \alpha_1,\dots,\alpha_k \in\R \}$$
is dense in $D(A^s) \cap \mathcal{S}$.
\end{proposition}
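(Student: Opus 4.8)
The plan is to reduce Proposition~\ref{Prop:Boscain&al_smooth} to the already-known $L^2$ statement (Proposition~\ref{Prop:Boscain&al}) by a combination of a genericity/density argument in the smaller space $W^{s',\infty}$ and a quantitative control of the relevant propagators in the $D(A^s)$-norm. First I would observe that the controlled dynamics appearing in the dense-orbit statement are exactly those of system~\eqref{eq:small-time-oscillator} with the constant controls $(u_1,u_2)=(1-\text{something},\alpha_k)$ restricted to the drift $\Delta-|x|^2+\alpha W_2$; in particular, each propagator $e^{i\sigma(\Delta-|x|^2+\alpha W_2)}$ is of the form treated in Proposition~\ref{Prop:WP_D(As)}, so for $W_2\in W^{2s,\infty}$ it maps $D(A^s)$ into itself with operator norm bounded by $e^{C\sigma}$, where $C=C(s,\alpha,W_2)$. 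This gives the a priori regularity needed to make sense of the orbit in $D(A^s)$ and, more importantly, the estimate $\|\psi(t;u,\cdot)\|_{\mathcal L(D(A^s))}\le e^{Ct}$ is the exact analogue, at higher regularity, of the isometry property used in $L^2$.

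Next I would set up the genericity argument. The $L^2$ result \cite[Theorem 2.6]{BCCS}, \cite[Proposition 4.6]{MS-generic} produces a dense $G_\delta$ (or at least dense) set $\mathcal D_{L^2}\subset L^\infty$ of potentials for which the orbit is $L^2$-dense; the standard way these theorems are proved is via a spectral non-degeneracy/non-resonance condition on the pair $(-\Delta+|x|^2, W_2)$ together with a connectedness condition on a ``coupling graph'', both of which are generic conditions. The point is that these conditions are preserved when one intersects with the smaller space $W^{s',\infty}$, and that $W^{s',\infty}$ is itself dense (and Baire) so that a generic-in-$L^\infty$ condition restricts to a generic-in-$W^{s',\infty}$ condition, provided the defining conditions only involve finitely many eigenvalues and finitely many matrix elements $\langle\phi_j,W_2\phi_k\rangle$ at a time --- which they do, being formulated level by level. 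Concretely, for each $n$ the set of $W_2$ for which the truncated Galerkin system on the first $n$ eigenmodes is controllable is open and dense in $L^\infty$, hence open and dense in $W^{s',\infty}$; intersecting over $n$ gives the desired dense $\mathcal D\subset W^{s',\infty}$.

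Then I would upgrade $L^2$-density of the orbit to $D(A^s)$-density by an interpolation/equiboundedness trick. Fix $\psi_0\in D(A^s)\cap\mathcal S$ and a target $\psi_1\in D(A^s)\cap\mathcal S$. By a density argument one may assume $\psi_1$ (and, after a small perturbation, the intermediate states) lie in a bounded subset of $D(A^{s''})$ for some $s''>s$, e.g.\ in the span of finitely many Hermite functions --- these are dense in every $D(A^k)$. Applying the $L^2$-controllability with $\mathcal D$-generic $W_2$ one gets a finite product $P=e^{i\sigma_k(\Delta-|x|^2+\alpha_kW_2)}\cdots e^{i\sigma_1(\cdots)}$ with $\|P\psi_0-\psi_1\|_{L^2}<\varepsilon$; the number of factors $k$ and the times $\sigma_j$ are bounded in terms of $\varepsilon$ and the data, so by Proposition~\ref{Prop:WP_D(As)} applied with exponent $s''$, $\|P\psi_0\|_{D(A^{s''})}\le M(\varepsilon,\psi_0)$. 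Since $\psi_1\in D(A^{s''})$ as well, the difference $P\psi_0-\psi_1$ is bounded in $D(A^{s''})$ and small in $L^2$; by the interpolation inequality $\|\cdot\|_{D(A^s)}\lesssim\|\cdot\|_{L^2}^{1-s/s''}\|\cdot\|_{D(A^{s''})}^{s/s''}$ (valid for the scale of a positive self-adjoint operator), it is therefore small in $D(A^s)$. Choosing $\varepsilon$ small enough gives $\|P\psi_0-\psi_1\|_{D(A^s)}<\delta$, which is the claim.

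The main obstacle, and the step requiring the most care, is the genericity argument of the second paragraph: one must check that the density/non-resonance conditions underlying \cite{BCCS,MS-generic} are genuinely ``level-by-level'' and stable under restriction to $W^{s',\infty}$, and in particular that perturbing $W_2$ within $W^{s',\infty}$ (rather than within $L^\infty$) still suffices to move the finitely many spectral quantities into general position --- this is where the hypothesis $s'\ge 2s$ and the regularity theory of the harmonic oscillator (Hermite functions are smooth with controlled growth, so the matrix elements $\langle\phi_j,W_2\phi_k\rangle$ depend continuously and ``non-degenerately'' on $W_2\in W^{s',\infty}$) enter. A secondary, more routine point is justifying the interpolation inequality and the a priori $D(A^{s''})$-bound uniformly over the finite products arising in the $L^2$ proof; both follow from Proposition~\ref{Prop:WP_D(As)} and spectral calculus, but one should be slightly careful that the bound $M(\varepsilon,\psi_0)$ does not blow up faster than the $L^2$-smallness shrinks, which is guaranteed because the control time and number of switches provided by the $L^2$ theorem can be taken independent of the regularity.
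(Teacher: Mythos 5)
Your third paragraph — the attempt to upgrade $L^2$-density to $D(A^s)$-density by interpolation — has a genuine gap, and it is precisely the point where the paper's proof takes a different route. You want to bound $\|P\psi_0 - \psi_1\|_{D(A^s)}$ by $\|P\psi_0-\psi_1\|_{L^2}^{1-\theta}\|P\psi_0-\psi_1\|_{D(A^{s''})}^{\theta}$ and argue that the first factor can be made small while the second stays bounded. But the $D(A^{s''})$-bound $M(\varepsilon,\psi_0)$ on $\|P\psi_0\|_{D(A^{s''})}$ comes from the exponential estimate $e^{C\sigma}$ of Proposition~\ref{Prop:WP_D(As)}, applied across all factors of $P$; as $\varepsilon\to 0$ the $L^2$ theorem gives no control whatsoever on the total time $\sum\sigma_j$, the number of switches $k$, or the control amplitudes $\alpha_j$, so $M(\varepsilon,\psi_0)$ may grow arbitrarily fast. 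Your remark that ``the control time and number of switches provided by the $L^2$ theorem can be taken independent of the regularity'' does not address the issue: the problem is dependence on $\varepsilon$, not on the regularity index. Without a quantitative rate in the $L^2$ approximate controllability result (which the cited theorems do not provide), the product $\varepsilon^{1-\theta} M(\varepsilon,\psi_0)^\theta$ need not go to zero, and the interpolation argument does not close. The paper avoids this entirely: it does not derive the $D(A^s)$-statement from the $L^2$-statement, but instead invokes abstract results (\cite[Corollary 2.13]{BCS} and \cite[Proposition 2.10]{CS}) which assert that the existence of a non-resonant chain of connectedness \emph{directly} yields $D(A^s)$-approximate controllability. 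The regularity upgrade is thus absorbed into the cited theorems, and the whole proof reduces to checking that the non-resonance/connectedness conditions hold for a dense set of $W_2\in W^{s',\infty}$.

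On the genericity step (your second paragraph): your intuition about it is aimed in the right direction, but the reasoning ``open and dense in $L^\infty$, hence open and dense in $W^{s',\infty}$'' is not valid as stated. Openness passes to the subspace because the inclusion $W^{s',\infty}\hookrightarrow L^\infty$ is continuous, but density in $L^\infty$ gives no information about density in the finer topology of $W^{s',\infty}$. The paper does the work here explicitly: given a reference potential $\widetilde W\in L^\infty$ satisfying the non-resonance (resp.\ connectedness) condition, it first approximates $\widetilde W$ in $L^\infty$ by smooth $W_n\in W^{s',\infty}$ with simple spectrum, uses continuity of the finitely many eigenvalues and eigenfunctions involved to transfer the condition to some $W_{\overline n}$, and then joins an arbitrary $W\in W^{s',\infty}$ to $W_{\overline n}$ by an \emph{analytic path in $W^{s',\infty}$} along which the relevant finite linear combinations of eigenvalues (resp.\ the matrix elements $\langle \phi_j, W \phi_k\rangle$) are analytic and not identically zero, hence nonzero for a.e.\ parameter. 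It is this analytic-perturbation argument, not the mere finiteness of the conditions, that secures density in $W^{s',\infty}$. You correctly flag this as the delicate point, but the proposal leaves it at the level of a plausibility argument, and the ``hence'' in that paragraph is where it would need to be filled in.
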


To prove Proposition \ref{Prop:Boscain&al_smooth} we adapt the strategy of 
\cite[Theorem 2.6]{BCCS} and \cite[Proposition 4.6]{MS-generic}
to a different functional framework.

\begin{proof}
By \cite[Corollary 2.13]{BCS} and \cite[Proposition 2.10]{CS}, the existence of a non-resonant chain of connectedness implies the $D(A^s)$-approximate controllability result for \eqref{eq:small-time-oscillator}, in conclusion of Proposition \ref{Prop:Boscain&al_smooth}.
Thus, to get Proposition \ref{Prop:Boscain&al_smooth}, it suffices to prove that, for every $s \in \N^*$, the set $\mathcal{D}$ of $W_2\in W^{s,\infty}(\R^d,\R)$ such that system \eqref{eq:small-time-oscillator} admits a non-resonant chain of connectedness is dense in $W^{s,\infty}(\R^d,\R)$. 
  
\medskip   
  
\noindent \emph{Step 1:  Given $k\in\N$ and $q\in\mathbb{Q}^k\setminus\{0\}$, we prove the openness and density in $W^{s,\infty}(\R^d,\R)$ of the set $\mathcal{O}_q:=\widetilde{\mathcal{O}_q}\cap W^{s,\infty}(\R^d,\R)$, where
$$\widetilde{\mathcal{O}_q}:=\{W\in L^{\infty}(\R^d,\R); \sum_{j=1}^k q_j\lambda_j(-\Delta+|x|^2+W)\neq 0\}$$
and $\lambda_j(-\Delta+|x|^2+W)$ denotes the $j$-th eigenvalue of $-\Delta+|x|^2+W$.} 
This set is open in $W^{s,\infty}(\R^d,\R)$ because the eigenvalues $\lambda_j(-\Delta+|x|^2+W)$ are continuous w.r.t. variations of $W$ in $L^\infty$ and in particular in $W^{s,\infty}$. 
To prove its density in $W^{s,\infty}(\R^d,\R)$, we consider $\widetilde{W}\in \widetilde{\mathcal{O}_q}$, whose existence is guaranteed by \cite[Proposition 4.6]{MS-generic} (indeed, notice that the notion of "effective quadrupole" used in the statement of \cite[Proposition 4.6]{MS-generic} in our case exactly means that there exists a dense set of control potentials $\widetilde{W}\in L^\infty$ satisfying the spectral conditions given in the definitions of $\widetilde{\mathcal{O}_q}$ and $\widetilde{\mathcal{Q}_n}$ below). Now consider a sequence $(W_n)_{n\in\N}\subset W^{s,\infty}(\R^d,\R)$ such that $\|W_n-\widetilde{W}\|_{L^\infty} \to 0$ as $n \to \infty$ and such that the spectrum of $-\Delta+|x|^2+W_n$ is simple for all $n\in\N$; the existence of such a sequence is guaranteed by \cite{albert-75}. By continuity of the spectrum as a function of $W\in L^\infty$, we have that
$$\sum_{j=1}^k q_j\lambda_j(-\Delta+|x|^2+W_n)
\underset{n \to \infty}{\longrightarrow}
\sum_{j=1}^k q_j\lambda_j(-\Delta+|x|^2+\widetilde{W})\neq 0,$$
hence $W_n\in \mathcal{O}_q$ for $n=\overline{n}$ large enough. We now consider an analytic path $[0,1]\ni\mu\mapsto W(\mu)\in W^{s,\infty}(\R^d,\R)$ such that $W(0)=0, W(1)=W_{\overline{n}}-W$, and the spectrum of $-\Delta+|x|^2+W+W(\mu)$ is simple for all $\mu\in[0,1]$; the existence of such a path can be proved as in \cite[Proposition 2.12]{MS-generic}. Since the map $\mu\mapsto\sum_{j=1}^k q_j\lambda_j(-\Delta+|x|^2+W+W(\mu))$ is analytic and different from zero at $\mu=1$, it is different from zero for a.e. $\mu\in[0,1]$; in particular, for $\mu$ close to zero, we find $W+W(\mu)$ close to $W$ in $W^{s,\infty}$ belonging to $\mathcal{O}_q$.

\medskip

\noindent \emph{Step 2: Given $n\in\N$, we prove the openness and density in $W^{s,\infty}(\R^d,\R)$ of the set $\mathcal{Q}_n:=\widetilde{\mathcal{Q}_n}\cap W^{s,\infty}(\R^d,\R)$ where
\begin{align*}
\widetilde{\mathcal{Q}_n}:=\{&W\in L^{\infty}(\R^d,\R); \forall j,k\in\{1,\dots,n\}\exists r_1,\dots,r_l\in\N, r_1=j,r_l=k, \text{ with }\\
&\lambda_{r_i}(-\Delta+|x|^2+W) \text{ simple } \forall i=1,\dots,l, \text{ and }\\
&\int_{\R^d}W\phi_{r_i}(-\Delta+|x|^2+W)\phi_{r_{i+1}}(-\Delta+|x|^2+W)\neq 0\,\, \forall i=1,\dots,l-1\},
\end{align*}
where $\phi_j(-\Delta+|x|^2+W)$ denotes the $j$-th eigenfunction of $-\Delta+|x|^2+W$.} 
We argue exactly as in Step 1: we exploit the continuity of $W^{s,\infty}\ni W\mapsto |W|^{1/2}\phi_j(-\Delta+|x|^2+W)\in L^2$, and the existence of a reference $\widetilde{W}\in \widetilde{\mathcal{Q}_n}$ (whose existence is guaranteed again by \cite[Proposition 4.6]{MS-generic}).

\medskip

\noindent \emph{Step 3: Conclusion.}
The set of control operators $W_2\in W^{s,\infty}(\R^d,\R)$ such that system \eqref{eq:small-time-oscillator} admits a non-resonant chain of connectedness is given by 
$$\mathcal{D}=\cap_{n\in\N^*} \mathcal{Q}_n \cap_{k\in\N, q \in \mathbb{Q}^k\setminus \{0\}} \mathcal{O}_q.$$
By applying Baire's Theorem, this set is dense in $W^{s,\infty}(\R^d,\R)$.
\end{proof}

\subsection{Small time $D(A^s)$-approximate controllability}

Taking into account Proposition \ref{Prop:Boscain&al_smooth},
our strategy to prove Theorem \ref{thm:small-time-global_smooth} relies on the following result.

\begin{proposition} \label{Prop:Approx}
Let  $s \in \N$ and $W_2 \in W^{2(s+1),\infty}(\R^d,\R)$.
System (\ref{eq:small-time-oscillator}) satisfies the following property: for every
$\sigma \geq 0$, $\alpha \in \R$, the operator
$e^{i \sigma(\Delta-|x|^2+\alpha W_2)}$
is $D(A^s)$-STAR.
\end{proposition}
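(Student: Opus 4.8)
The target, Proposition~\ref{Prop:Approx}, is the $D(A^s)$-analogue of Proposition~\ref{Prop:Approx_BIS}, so the natural plan is to replay the reduction chain of Section~\ref{Subsec:LTAC} in the functional framework $\mathcal{H}=D(A^s)$, and then to prove the key building block --- namely that $e^{i\sigma\Delta}$ is $D(A^s)$-STAR --- directly, since for $V=0$ there is an explicit representation formula. First I would record that, by Proposition~\ref{Prop:WP_D(As)}, piecewise-constant (more precisely, $\mathcal{U}(0,T)$-) controls are admissible for the state space $D(A^s)$ when $W_2\in W^{2s,\infty}$, with the propagator bound $\|\psi(t;u,\cdot)\|_{\mathcal{L}(D(A^s))}\le e^{Ct}$; this is exactly what Lemma~\ref{lem:reachable-operators} needs (the constant in \eqref{def:T1u1} is finite), so the semigroup and strong-closure properties of $D(A^s)$-STAR operators hold verbatim. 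Then the reduction to $\alpha=0$ goes through as in the proof of ``Proposition~\ref{Prop:Approx_BIS} from Proposition~\ref{Prop:Approx_BIS_alpha=0}'': for Step~1 one needs $e^{i\frac{\sigma\alpha}{n}W_2}$ to be $D(A^s)$-STAR, which follows from a $D(A^s)$-version of Proposition~\ref{prop:trotter} applied to $A_\tau=\tau(\Delta-V)+\frac{\sigma\alpha}{\tau n}W_2$ versus $A_0=\frac{\sigma\alpha}{n}W_2$ --- here one uses $W_2\in W^{2(s+1),\infty}$ so that multiplication by $W_2$ is bounded on $D(A^s)$ and $[A^s,W_2]$ maps $D(A^s)\to L^2$ as in Step~1 of the proof of Proposition~\ref{Prop:WP_D(As)} --- and for Step~2 one invokes the Trotter--Kato formula (Proposition~\ref{prop:trotter-kato}) in $D(A^s)$, noting $A+B=\sigma(\Delta-|x|^2+\alpha W_2)$ is self-adjoint on $D(A)\cap D(A^s)$ and commutes suitably with $A^s$.

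The crux is therefore to establish, for $V=0$, that $e^{i\sigma(\Delta-|x|^2)}$ (equivalently $e^{i\sigma\Delta}$) is $D(A^s)$-STAR. Here I would reuse the explicit representation formula \eqref{explicit}: for the control constructed in Step~2 of the proof of Proposition~\ref{Prop_exact} one has $\psi(T;u,\cdot)=e^{ia(T)|x|^2}D_{1/b(T)}e^{i\sigma(\Delta-|x|^2)}$ with $(a,b)(T)=(0,1)$, so in fact $\psi(T;u,\cdot)=e^{i\sigma(\Delta-|x|^2)}$ \emph{exactly}; the only thing to check is that the intermediate-time propagators $\psi(t;u,\cdot)$ preserve $D(A^s)$ and are uniformly bounded on it, which is again Proposition~\ref{Prop:WP_D(As)} with $W_2=0$. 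This gives $e^{i\sigma(\Delta-|x|^2)}$ exactly (hence $D(A^s)$-) reachable in arbitrarily small time; combined with $e^{i\delta|x|^2}$ being $D(A^s)$-STAR (constant control $u_1=-\delta/\tau$, plus Proposition~\ref{prop:trotter} in $D(A^s)$ to pass $\tau\to0$, observing $|x|^2$ is bounded $D(A^{s+1})\to D(A^s)$) and the Trotter--Kato product formula, one recovers $e^{i\sigma\Delta}$ as $D(A^s)$-STAR, closing the loop.

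\textbf{Main obstacle.} The genuinely delicate point is the $D(A^s)$-convergence in Propositions~\ref{prop:trotter} and~\ref{prop:trotter-kato}: those are stated for self-adjoint operators on a Hilbert space with a common core, whereas $D(A^s)$ for $s\ge1$ is not the ambient $L^2$ but a scale space, and the relevant operators ($\Delta-|x|^2$, $|x|^2$, $W_2$, and their sums) must be shown to generate $C_0$-groups on $D(A^s)$ that converge strongly there. The clean way around this is to conjugate by $A^s$: since $A^s$ is a (densely defined, closed) isomorphism $D(A^s)\to L^2$, strong convergence of $e^{iA^s_n t}$ to $e^{iA_0t}$ on $D(A^s)$ is equivalent to strong convergence of $A^s e^{iA^s_n t}A^{-s}$ on $L^2$, and the commutator estimates of Step~1 of the proof of Proposition~\ref{Prop:WP_D(As)} (which require precisely the $W^{2(s+1),\infty}$ regularity, losing one power of $A$ per commutator with $|x|^2$ or $W_2$) control the difference $A^s e^{iA^s_n t}A^{-s} - e^{iA_0t}$ via a Duhamel argument. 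Carrying out this conjugation carefully --- and checking that all the operators appearing ($\tau(\Delta-V)$, $tV(t^{1/2}\cdot)$ in the $V\ne0$ case, though for this Proposition $V=0$) are handled uniformly --- is the technical heart; everything else is a transcription of Section~\ref{Subsec:LTAC} with $L^2$ replaced by $D(A^s)$.
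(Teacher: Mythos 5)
Your high-level plan is right --- reduce to the exact reachability of $e^{i\sigma(\Delta-|x|^2)}$ from Proposition~\ref{Prop_exact}, then use Trotter--Kato to bring $W_2$ back in --- but you miss both of the specific devices the paper actually uses to make this work in $D(A^s)$, and your proposed substitute for the second one is not a working argument.

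First, the paper does \emph{not} split $\sigma(\Delta-|x|^2+\alpha W_2)$ as $\sigma(\Delta-|x|^2)+\sigma\alpha W_2$, which would force you (as you propose) to first show that $e^{i\frac{\sigma\alpha}{n}W_2}$ is $D(A^s)$-STAR by letting $\tau\to0$ in $e^{i\tau(\Delta+\frac{\sigma\alpha}{\tau n}W_2)}$. Instead it fixes $\varepsilon\in(0,\sigma)$, sets $\sigma'=\sigma-\varepsilon$, and uses the Trotter factors
$e^{i\frac{\sigma'}{n}(\Delta-|x|^2)}$ and
$e^{i\frac{\varepsilon}{n}(\Delta-|x|^2+\frac{\alpha\sigma}{\varepsilon}W_2)}$.
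The point of carrying the full drift $\Delta-|x|^2$ into \emph{both} factors is that each is then a propagator of system \eqref{eq:small-time-oscillator} with a fixed constant control (resp.\ the limit object of Proposition~\ref{Prop_exact}), so the uniform bound $\|e^{tA}\|_{\mathcal{L}(D(A^{s+1}))},\|e^{tB}\|_{\mathcal{L}(D(A^{s+1}))}\le e^{Ct}$ of Proposition~\ref{Prop:WP_D(As)} applies directly with a $C$ that does not blow up. Your splitting produces propagators with controls $u_2\sim 1/\tau$ whose $D(A^{s+1})$-growth rate you would have to control separately; the paper simply avoids this.

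Second, and more importantly, the mechanism for $D(A^s)$-convergence in the Trotter limit is not a conjugation-by-$A^s$/Duhamel argument; it is the interpolation lemma Proposition~\ref{prop:trotter-kato_smooth}. That lemma converts (i) strong $L^2$-convergence of the Trotter products (the classical Proposition~\ref{prop:trotter-kato}) plus (ii) uniform $D(A^{s+1})$-bounds on the factors into strong $D(A^s)$-convergence, via the interpolation inequality $\|\phi\|_{D(A^s)}\le\|\phi\|_{L^2}^\theta\|\phi\|_{D(A^{s+1})}^{1-\theta}$. This is precisely where the hypothesis $W_2\in W^{2(s+1),\infty}$ is consumed (you do correctly note that one derivative order is lost). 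Your sketch --- ``conjugate by $A^s$ and control the difference via Duhamel'' --- is never made precise; and your parenthetical claim that $A+B=\sigma(\Delta-|x|^2+\alpha W_2)$ ``commutes suitably with $A^s$'' is false (it does not commute with $A^s$, and the nonzero commutator is exactly the source of difficulty). So as written, the crux step of your proposal has a genuine gap: neither the correct Trotter splitting nor the interpolation argument that the paper relies on is identified, and the alternative you gesture at would need substantial further work to become a proof.
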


The proof of Proposition \ref{Prop:Approx} (as the one of Proposition \ref{Prop:Approx_BIS}) relies on the small-time exact reachability of $e^{i\sigma(\Delta-|x|^2)}$ (i.e. Proposition \ref{Prop_exact}) and the Trotter-Kato formula.
For the convergence in this formula to hold in the regular spaces $D(A^s)$, we use additional ingredients, explicited in the following statement.

\begin{proposition} \label{prop:trotter-kato_smooth}
Let $A,B, \mathcal{H}$ be as in Proposition \ref{prop:trotter-kato} and $X$ be a dense vector subspace of $\mathcal{H}$ equipped with a norm $\|.\|_{X}$. We assume there exists $C>0$ such that, for every $t \in [0,1]$, $e^{itA}, e^{itB}, e^{i(A+B)}$ are uniformly bounded operators on $X$ and
\begin{equation} \label{hyp_AB}
\| e^{tA} \|_{\mathcal{L}(X)},
\| e^{tB} \|_{\mathcal{L}(X)} \leq e^{C t}.
\end{equation}
Then, for every (strict) interpolation space $Y$ between $\mathcal{H}$ and $X$,
and for every $\psi_0 \in Y$,
$$ \left\| \left(e^{i\frac{B}{n}}e^{i\frac{A}{n}}\right)^n\psi_0 - e^{i(A+B)}\psi_0\right\|_{Y} \underset{n \rightarrow + \infty}{\longrightarrow} 0$$
\end{proposition}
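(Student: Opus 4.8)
The plan is to combine the $\mathcal{H}$-convergence already guaranteed by Proposition \ref{prop:trotter-kato} with a uniform bound in $X$, and then interpolate. First I would establish the uniform operator bound on $X$: from the hypothesis \eqref{hyp_AB}, each factor $e^{iA/n}$ and $e^{iB/n}$ satisfies $\|e^{iA/n}\|_{\mathcal{L}(X)}, \|e^{iB/n}\|_{\mathcal{L}(X)} \leq e^{C/n}$, hence the product $\left(e^{iB/n}e^{iA/n}\right)^n$ has $X$-operator norm at most $\left(e^{C/n}e^{C/n}\right)^n = e^{2C}$, uniformly in $n$. Since $e^{i(A+B)}$ is also bounded on $X$ by assumption, the family of operators $R_n := \left(e^{iB/n}e^{iA/n}\right)^n - e^{i(A+B)}$ is uniformly bounded in $\mathcal{L}(X)$, say by a constant $K$ independent of $n$. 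On the other hand, by Proposition \ref{prop:trotter-kato}, for every $\psi_0 \in \mathcal{H}$ we have $\|R_n \psi_0\|_{\mathcal{H}} \to 0$; in particular $\|R_n\|_{\mathcal{L}(\mathcal{H})} $ is bounded (by the uniform boundedness principle, or more elementarily just $\leq 2$ since all the propagators are unitary on $\mathcal{H}$), so we also have $\sup_n \|R_n\|_{\mathcal{L}(\mathcal{H})} \leq 2$.

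Next I would use interpolation to get the $Y$-convergence on a dense set. Fix $\psi_0 \in X$. By the standard interpolation inequality, for an interpolation space $Y$ with parameter $\theta \in (0,1)$ between $\mathcal{H}$ and $X$, there is a constant $c_Y$ with $\|R_n\psi_0\|_{Y} \leq c_Y \|R_n\psi_0\|_{\mathcal{H}}^{1-\theta}\|R_n\psi_0\|_{X}^{\theta} \leq c_Y (2\|\psi_0\|_{\mathcal{H}})^{1-\theta}(K\|\psi_0\|_{X})^{\theta}\,\|R_n\psi_0\|_{\mathcal{H}}^{1-\theta} \cdot \|\psi_0\|_{\mathcal{H}}^{-(1-\theta)}$; more simply, $\|R_n\psi_0\|_{Y} \leq c_Y \|R_n\psi_0\|_{\mathcal{H}}^{1-\theta}\|R_n\psi_0\|_{X}^{\theta}$, and the first factor tends to $0$ while the second is bounded by $(K\|\psi_0\|_X)^\theta$. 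Hence $\|R_n\psi_0\|_Y \to 0$ for every $\psi_0 \in X$.

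Finally I would pass from the dense subspace $X$ to all of $Y$ by a standard $\varepsilon/3$ density argument, using that $\sup_n \|R_n\|_{\mathcal{L}(Y)} < \infty$: indeed $\|R_n\|_{\mathcal{L}(Y)} \leq c_Y \|R_n\|_{\mathcal{L}(\mathcal{H})}^{1-\theta}\|R_n\|_{\mathcal{L}(X)}^{\theta} \leq c_Y 2^{1-\theta}(K)^{\theta} =: M$. Given $\psi_0 \in Y$ and $\varepsilon > 0$, pick $\phi \in X$ with $\|\psi_0-\phi\|_Y < \varepsilon/(3M)$; then $\|R_n\psi_0\|_Y \leq \|R_n(\psi_0-\phi)\|_Y + \|R_n\phi\|_Y \leq M\,\varepsilon/(3M) + \|R_n\phi\|_Y$, and the last term is $<\varepsilon/3$ for $n$ large, giving $\|R_n\psi_0\|_Y < \varepsilon$. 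The main obstacle I anticipate is purely bookkeeping: making sure the notion of "(strict) interpolation space" is the one for which the bound $\|u\|_Y \leq c_Y\|u\|_{\mathcal{H}}^{1-\theta}\|u\|_X^\theta$ holds (this is the defining property of the real or complex interpolation functors, so it should be invoked rather than proved), and checking that $e^{i(A+B)}$ is bounded on $X$ — but that is an explicit hypothesis of the statement. No genuinely hard analytic point arises beyond correctly citing the interpolation inequality.
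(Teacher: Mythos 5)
Your argument is correct and follows essentially the same two-step route as the paper: first the interpolation inequality $\|\cdot\|_Y \lesssim \|\cdot\|_{\mathcal{H}}^{\theta}\|\cdot\|_X^{1-\theta}$ gives convergence for $\psi_0 \in X$ (the $\mathcal{H}$-factor tends to $0$ by Trotter--Kato, the $X$-factor stays bounded thanks to $\|(e^{iB/n}e^{iA/n})^n\|_{\mathcal{L}(X)}\le e^{2C}$ and the boundedness of $e^{i(A+B)}$ on $X$), and then a density argument extends this to all $\psi_0\in Y$ using the uniform bound $\sup_n\|R_n\|_{\mathcal{L}(Y)}<\infty$. The only cosmetic difference is that you carry an explicit interpolation constant $c_Y$ and package things as $R_n$, but the structure and all the estimates match the paper's proof.
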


\begin{proof}[Proof of Proposition \ref{prop:trotter-kato_smooth}]
There exists $\theta \in (0,1)$ such that, 
\begin{equation} \label{interp}
\forall \phi_0 \in X, \qquad 
\|\phi_0\|_{Y} \leq \|\phi_0\|_{\mathcal{H}}^{\theta} \|\phi_0\|_{X}^{1-\theta}.
\end{equation}

\medskip

\noindent \emph{Step 1: Convergence for $\psi_0 \in X$.} We deduce from (\ref{interp}), the triangular inequality and (\ref{hyp_AB}) that
$$\left\| \left(e^{i\frac{B}{n}}e^{i\frac{A}{n}}\right)^n\psi_0 - e^{i(A+B)}\psi_0\right\|_{Y}
\leq 
\left\| \left(e^{i\frac{B}{n}}e^{i\frac{A}{n}}\right)^n\psi_0 - e^{i(A+B)}\psi_0\right\|_{\mathcal{H}}^{\theta}
\left( e^C\|\psi_0\|_{X} +  \left\| e^{i(A+B)}\psi_0\right\|_{X} \right)^{1-\theta} $$
which gives the conclusion.

\medskip

\noindent \emph{Step 2: Convergence for $\psi_0 \in Y$.} Let $\psi_0 \in \mathcal{H}$, $\widetilde{\psi}_0 \in X$, $S_n:=\left(e^{i\frac{B}{n}}e^{i\frac{A}{n}}\right)^n$ and $S=e^{i(A+B)}$.
Then
$$
\|  (S_n-S) \psi_0  \|_{Y}
\leq 
\| (S_n-S) \widetilde{\psi}_0 \|_{Y}
 + 2 e^{C'}\| \widetilde{\psi}_0-\psi_0 \|_{Y}
$$
for some constant $ C'>0$; indeed, $S_n, S$ are bounded operators on $\mathcal{H}$ and $X$ thus also on $Y$. The density of $X$ in $Y$ gives the conclusion.
\end{proof}

\begin{proof}[Proof of Proposition \ref{Prop:Approx}:]
Let $\sigma > 0$, $\alpha \in \R$.
Let 
$\varepsilon \in (0,\sigma)$, 
$\sigma':=\sigma-\varepsilon$ and 
$n \in \N^*$.

\noindent \emph{Step 1: We prove that the operator $L_n$ is exactly reachable in time $\varepsilon^+$, where
$$L_n:=\left( e^{i\frac{\sigma'}{n}(\Delta-|x|^2)}
e^{i\frac{\varepsilon}{n}(\Delta-|x|^2+\frac{\alpha \sigma}{\varepsilon} W_2)}
\right)^n. $$}
By Proposition \ref{Prop_exact}, the operator 
$e^{i\frac{\sigma'}{n}(\Delta-|x|^2)}$ 
is small-time exactly reachable. The operator
$e^{i\frac{\varepsilon}{n}(\Delta-|x|^2+\frac{\alpha \sigma}{\varepsilon}W_2)}$ is exactly reachable in time $\varepsilon/n$ because associated with the constant control $u=(1,-\frac{\alpha \sigma}{\varepsilon})$. Therefore, Lemma \ref{lem:reachable-operators} ends Step 1.

\medskip

\noindent \emph{Step 2: We apply  Proposition \ref{prop:trotter-kato_smooth} with 
$\mathcal{H}=L^2(\R^d,\C)$,
$X=D(A^{s+1})$, 
$Y=D(A^s)$,
$A=\sigma'(\Delta-|x|^2)$ and
$B=\varepsilon (\Delta-|x|^2+\frac{\alpha \sigma}{\varepsilon}W_2)$.}
The operators $A$ and $B$ are essentially self-adjoint on $C^\infty_c(\R^d,\C)$ and so does their sum, by Proposition \ref{prop:self-adjointness}. The bounds (\ref{hyp_AB}) are proved in Proposition \ref{Prop:WP_D(As)}, because $W_2 \in W^{2(s+1),\infty}(\R^d,\R)$. Thus, for every $\psi \in D(A^s)$,
$\|(L_n-e^{i \sigma(\Delta-|x|^2+\alpha W_2)})\psi\|_{D(A^s)} \to 0$ as $n \to \infty$. By Lemma \ref{lem:reachable-operators}, this proves that the operator $e^{i \sigma(\Delta-|x|^2+\alpha W_2)}$ is $D(A^s)$-approximately reachable in time $\varepsilon^+$. This holds for any $\varepsilon \in (0,\sigma)$ thus, the operator $e^{i \sigma(\Delta-|x|^2+\alpha W_2)}$ is $D(A^s)$-STAR.      
\end{proof}

\appendix
\section{Appendix} \label{Appendix1}

In this section, we gather the proofs of the formulas (\ref{explicit}) and (\ref{reduc}).

\begin{proof}[Proof of formula (\ref{explicit})]
Let $\psi_0, u, \psi$ be such that (\ref{eq:trap}) holds and 
$\xi$ be defined by 
$$\psi(t,x)=\xi\left( \zeta(t) , \frac{x}{b(t)} \right) \frac{e^{i a(t) |x|^2}}{b(t)^{d/2}}$$
where $a,b,\zeta$ are defined by (\ref{eq:abxi}) (see also (\ref{def:b_zeta})).
To prove (\ref{explicit}), it suffices to prove that $\xi$ solves 
$$\left\lbrace \begin{array}{l}
i \partial_{\tau} \xi(\tau,y) = (-\Delta+|y|^2) \xi(\tau,y), \qquad (\tau,y)\in \R_+ \times\R^d, \\
\xi(0,y)=\psi_0(y).
\end{array}\right.$$
One may assume $\psi_0, u$ sufficiently regular so that $\psi$ and $\xi$ are $C^1$ w.r.t. the time-variable and $C^2$ w.r.t. the space variable. Then, the chain rule proves that, for every $(t,x) \in (0,T) \times \R^d$,
$$
i \partial_t \psi(t,x)=  
\left\{ 
\left( - \dot{a}(t)|x|^2 - i \frac{d \dot{b}(t)}{2 b(t)} \right)
\xi   
+
i \dot{\zeta}(t) \partial_{\tau} \xi 
- i \frac{\dot{b}(t)}{b(t)^2}  x  \cdot \nabla_y \xi  
\right\} 
\left( \zeta(t), \frac{x}{b(t)} \right)
\frac{e^{i a(t) |x|^2}}{b(t)^{d/2}},
$$
$$
\partial_{x_j} \psi(t,x) =
\left\{ 
\frac{1}{b(t)} \partial_{y_j} \xi 
+ 2 i a(t) x_j \xi
\right\} 
\left( \zeta(t), \frac{x}{b(t)} \right)
\frac{e^{i a(t) |x|^2}}{b(t)^{d/2}},
$$
$$
\partial_{x_j}^2 \psi(t,x) =
\left\{ 
\frac{1}{b(t)^2} \partial_{y_j}^2 \xi 
+ 4 i \frac{a(t)}{b(t)} x_j \partial_{y_j}\xi
+ \left( 2 i a(t) -4 a(t)^2 x_j^2 \right) \xi
\right\} 
\left( \zeta(t), \frac{x}{b(t)} \right)
\frac{e^{i a(t) |x|^2}}{b(t)^{d/2}},
$$
thus
$$\begin{aligned}
0  = & \left(i \partial_t + \Delta_x -u(t)|x|^2 \right)\psi(t,x)
\\  = &
\left\{ 
i \dot{\zeta}(t) \partial_{\tau} \xi 
+ \frac{1}{b(t)^2} \Delta_y \xi 
+ \left(   4 i \frac{a(t)}{b(t)} 
- i \frac{\dot{b}(t)}{b(t)^2} \right) x  \cdot \nabla_y \xi
+\left( - \dot{a}(t) - 4 a(t)^2 - u(t)  \right)|x|^2 \xi
\right.
\\ & \left.
+ \left( - i \frac{d \dot{b}(t)}{2 b(t)} - 2 i d  a(t)  \right) \xi
\right\} 
\left( \zeta(t), \frac{x}{b(t)} \right)
\frac{e^{i a(t) |x|^2}}{b(t)^{d/2}}.
\end{aligned}$$
Using (\ref{eq:abxi}), we conclude that, for every $(t,x) \in (0,T) \times \R^d$,
$$ 0 = \frac{1}{b(t)^2}
\left\{ i  \partial_{\tau} \xi 
+  \Delta_y \xi
- \left|\frac{x}{b(t)}\right|^2 \xi
\right\} 
\left( \zeta(t), \frac{x}{b(t)} \right)
$$
which gives the conclusion.
\end{proof}

\medskip

\begin{proof}[Proof of formula (\ref{reduc})]
Let $\psi_0, u, \psi, p,q,\theta$ be such that (\ref{eq:oscillator_BIS}) and (\ref{def:p,q,theta}) hold and $\xi$ be defined by
\begin{equation} 
\psi(t,x)=  \xi(t,x-q(t)) e^{i \left( \frac{1}{2} p(t) \cdot x  + \theta(t) \right)}
\end{equation}
To prove (\ref{reduc}), it suffices to prove that $\xi$ solves (\ref{equation_de_xi}). One may assume $\psi_0$ and $u$ regular enough so that $\psi$ and $\xi$ are $C^1$ w.r.t. the time-variable and $C^2$ w.r.t. the space-variable. Then, by the chain rule, for every $(t,x) \in (0,T)\times\R^d$,
$$
i \partial_t \psi(t,x)=\left\{ 
i \partial_t \xi
- i \dot{q}(t) \cdot \nabla_y \xi
- \left( \frac{1}{2}  \dot{p}(t) \cdot x  + \dot{\theta}(t) \right) \xi
\right\}(t,x-q(t)) e^{i \left( \frac{1}{2}  p(t) \cdot x  + \theta(t) \right)},
$$
$$
\partial_{x_j} \psi(t,x) =
\left\{
\partial_{y_j} \xi
+\frac{i}{2} p_j(t) \xi
\right\}(t,x-q(t)) e^{i \left( \frac{1}{2}  p(t) \cdot x  + \theta(t) \right)},
$$
$$
\partial_{x_j}^2 \psi(t,x) =
\left\{
\partial_{y_j}^2 \xi
+i p_j(t) \partial_{y_j} \xi
- \frac{p_j(t)^2}{4} \xi
\right\}(t,x-q(t)) e^{i \left( \frac{1}{2}  p(t) \cdot x  + \theta(t) \right)},
$$
thus
$$\begin{aligned}
0 & = \left(i \partial_t + \Delta - u_0(t)|x|^2 - u(t) \cdot x\right) \psi(t,x) e^{-i \left( \frac{1}{2}  p(t) \cdot x  + \theta(t) \right)}
\\ & = 
\Big\{ 
i \partial_t \xi
+ \Delta \xi
+ i( p(t)-\dot{q}(t) ) \cdot \nabla_y \xi
-  u_0(t) |x-q(t)|^2 \xi
 \\ & -  \left( 
 \left(
\frac{1}{2} \dot{p} + 2 u_0(t) q(t) + u(t) 
\right) \cdot x 
+ \left(
\dot{\theta}(t) + \frac{|p(t)|^2}{4} - u_0(t) |q(t)|^2 
\right)
 \right) \xi
\Big\}(t,x-q(t)).
\end{aligned}$$
Using (\ref{def:p,q,theta}), we obtain, for every $(t,x) \in \R_+ \times \R^d$
$$
0= \Big\{ 
i \partial_t \xi
+ \Delta \xi
-  u_0(t) |x-q(t)|^2 \xi
\Big\}(t,x-q(t)),
$$
which gives the conclusion.
\end{proof}

\textbf{Acknowledgments.} The authors would like to thank Andrei Agrachev and Rémi Carles for stimulating discussions.

This project has received financial support from the CNRS through the MITI interdisciplinary programs.

Karine Beauchard acknowledges support from grants ANR-20-CE40-0009 (Project TRECOS) and ANR-11-LABX-0020 (Labex Lebesgue), as well as from the Fondation Simone et Cino Del Duca -- Institut de France.

\bibliographystyle{siamplain}
\bibliography{references}

\end{document}